\def\nc{\newcommand}
\newcounter{licznik}[section]
\newtheorem{thm}[licznik]{THEOREM}
\newtheorem{prop}[licznik]{PROPOSITION}
\newtheorem{lemma}[licznik]{LEMMA}
\newtheorem{cor}[licznik]{COROLLARY}
\theoremstyle{definition}
\theoremstyle{remark}
\newtheorem{remark}[licznik]{REMARK}
\newtheorem{example}[licznik]{EXAMPLE}
\newcommand\assumptionlabel[1]{\hspace\labelsep
                               \normalfont\bfseries #1 \gdef\@currentlabel{#1}}
\newenvironment{assumption}
               {\list{}{\labelwidth\z@ \itemindent-\leftmargin
                        }}
               {\endlist}
\nc\com[1]{\textcolor{red}{#1}}
\nc\mop[1]{\hspace{#1pt}}
\nc{\laa}{\mathcal A}
\nc{\gie}{\mathcal G}
\nc{\lo}{\mathcal O}
\nc\ha{\mathcal{H}}
\nc\gk{\mathcal G}
\nc\ga{\mathcal A}
\nc{\prob}{\mathbb{P}}
\nc\pe{\mathcal{P}}
\nc{\lde}{\mathcal{D}}
\nc\eh{\mathcal{H}}
\nc{\ef}{\mathcal{F}}
\nc{\er}{\mathbb{R}}
\nc{\erp}{\mathbb{R}_+}
\nc{\en}{{\mathbb{N}}}
\nc{\es}{\mathcal{S}}
\nc{\te}{\mathcal{T}}
\nc{\borel}{\mathcal{B}}
\nc\bse{\mathcal{E}}
\nc\flz{\mathcal{Z}}
\nc\flm{\mathcal{M}}
\nc\fly{\mathcal{Y}}
\nc\bdelta{\mathbf{d}}
\nc\ve{\varepsilon}
\nc{\tl}{\tilde}
\nc{\ee}{{\mathbb{E}\,}}
\nc{\cee}[3]{{\ee #3[#1\,|\,#2 #3]}}
\nc{\norma}[1]{{\| #1 \|}}
\nc{\ind}[1]{1_{\{#1\}}\,}
\nc{\ilskal}[2]{{\langle {#1} , {#2} \rangle}}
\nc{\setcond}{: \exs}
\nc{\supp}{\mathop{\rm supp} \nolimits}
\nc{\conv}{\mathop{\rm conv} \nolimits}
\nc{\essinf}{\mathop{\rm ess\, inf} \limits}
\nc{\esssup}{\mathop{\rm ess\, sup} \limits}
\nc{\vvect}[2]{{\begin{pmatrix} #1 \\ #2 \end{pmatrix}}}
\nc{\vvvect}[3]{{\begin{pmatrix} #1 \\ #2 \\ #3 \end{pmatrix}}}
\nc\cont[1]{C^{\mathcal #1}}
\nc\bb{E^B}
\begin{document}

\author{Jan~Palczewski}
\address{Faculty of Mathematics, University of Warsaw, Banacha 2, 02-097 Warszawa, Poland and School of Mathematics, University of Leeds, Leeds LS2 9JT, UK}
\email{J.Palczewski@mimuw.edu.pl}

\author{\L ukasz Stettner}
\address{Institute of Mathematics, Polish Academy of Sciences, \'{S}niadeckich 8, 00-956 Warszawa, Poland and  Academy of Finance, Warszawa, Poland}
\email{stettner@impan.gov.pl}

\title{Stopping of functionals with discontinuity at the boundary of an open set
}
\thanks{Research of both authors supported in part by MNiSzW Grant no. NN 201 371836.}

\date{$\,$Date: 2011-04-28 13:26:31 $\,$}

\maketitle

\begin{abstract}
We explore properties of the value function and existence of optimal stopping times for functionals with discontinuities related to the boundary of an open (possibly unbounded) set $\mathcal{O}$. The stopping horizon is either random, equal to the first exit from the set $\mathcal{O}$, or fixed: finite or infinite. The payoff function is continuous with a possible jump at the boundary of $\mathcal{O}$. Using a generalization of the penalty method we derive a numerical algorithm for approximation of the value function for general Feller-Markov processes and show existence of optimal or $\varepsilon$-optimal stopping times.

\noindent\textbf{Keywords:} optimal stopping, Feller Markov process, discontinuous functional, penalty method
\end{abstract}

\setcounter{secnumdepth}{2}

\section{Introduction}
The problem of optimal stopping of Markov processes has received continuous attention for last fourty years and produced diverse approaches for its solution. Foundations and general existence results can be found, e.g., in Bismut and Skalli \cite{bismut1977}, El Karoui \cite{elkarui1981}, El Karoui et al. \cite{elkarui1982}, Fakeev \cite{fakeev}, and Mertens \cite{mertens}. From 1980s functional analytic methods gave way to a more explicit approach initiated by Bensoussan and Lions \cite{Bens-Lions}: value function was characterized as a solution to a variational inequality, which could be solved analytically or numerically. The main limitation of this method is the requirement of a particular differential form of the generator of the underlying Markov process. This paper belongs to another strand of literature which initially aimed at studying smoothness of the value function but also provides a different approach for the numerical approximation to the value function for a more general class of Markov processes (see Zabczyk \cite{Z} for a survey). These methods are not constrained by the form of generators and the development of the theory of PDEs. Specifically, we build on the penalty method introduced by Robin \cite{robin} and generalized by Stettner and Zabczyk \cite{SZ} (see also \cite{stettner}), which originates in ideas developed for partial differential equations but follows a purely probabilistic route. Of interest to numerical methods discussed in this paper is also a time-discretization technique explored by Mackevicius \cite{mac} and further applied by Kushner and Dupuis \cite{KD} for numerical algorithms; see also Palczewski and Stettner \cite{palczewski2008} for its application to stopping of time-discontinuous functionals.

We assume that the state of the world is described by a standard Markov process $\big(X(t)\big)$ defined on a locally compact separable space $E$ endowed with a metric $\rho$ with respect to which every closed ball is compact (see the Appendix for the definition and properties of standard Markov processes). The Borel $\sigma$-algebra on $E$ is denoted by $\bse$. The process $\big(X(t)\big)$ satisfies the weak Feller property:
$$
P_t\, \mathcal{C}_0 \subseteq \mathcal{C}_0,
$$
where $\mathcal C_0$ is the space of continuous bounded functions $E \to \er$ vanishing in infinity, and $P_t$ is the transition semigroup of the process $\big(X(t)\big)$, i.e., $P_t h(x) = \ee^x \left\{h\big(X(t)\big)\right\}$ for any bounded measurable $h:E \to \er$.

Let $\lo \subset E$ be an open set and $\tau_\lo = \inf\{t: X(t) \notin \lo\}$ -- the first exit time from $\lo$. We study maximization of several classes of functionals:
\begin{enumerate}
\item Stopping is allowed up to time $\tau_\lo$. The payoff is described by a function $G$ before $\tau_\lo$ and by a function $H$ at $\tau_\lo$:
\begin{equation}
\label{eqn:simple_vf}
\begin{aligned}
J(s, x, \tau) &= \ee^x\Big\{\int_0^{\tau \wedge \tau_\lo} e^{-\alpha u} f\big(s+u,X(u)\big) du\\
&\mop{35} + \ind{\tau<\tau_\lo}e^{-\alpha \tau} G\big(s+\tau,X(\tau)\big)+\ind{\tau\geq \tau_\lo} e^{-\alpha \tau_\lo} H\big(s + \tau_\lo, X(\tau_\lo)\big)\Big\},
\end{aligned}
\end{equation}
where $(s, x) \in [0, \infty)\times E$, $\alpha > 0$,  $\tau \ge 0$ and $f, G, H:[0, \infty) \times E \to \er$ are continuous bounded functions.
\item Stopping is allowed up to time $\tau_\lo$ and the payoff is given by a function $F:[0, \infty) \times E \to \er$ which is continuous on $[0, \infty) \times \lo$ and possibly discontinuous in the space variable on $[0, \infty) \times \lo^c$:
\begin{equation}
\label{eqn:functinal_2}
J(s, x, \tau) = \ee^x\Big\{\int_0^{\tau \wedge \tau_\lo} e^{-\alpha u} f\big(s+u,X(u)\big) du+ e^{-\alpha (\tau \wedge \tau_\lo)} F\big(s+(\tau \wedge \tau_\lo),X(\tau \wedge \tau_\lo)\big)\Big\}.
\end{equation}
This, in particular, covers a complementary problem to \eqref{eqn:simple_vf}: with $F$  continuous on $[0, \infty) \times \bar \lo$ and on $[0, \infty) \times \bar \lo^c$ with a possible jump at the boundary $[0, \infty)\times\partial \lo$.

\item Stopping is unconstrained (infinite horizon, $T=\infty$) or constrained by a constant $T$ (finite horizon) with the following functional:
\begin{equation}
\label{eqn:functinal_3}
J(s, x, \tau) = \ee^x\Big\{\int_0^{\tau \wedge (T-s)} e^{-\alpha u} f\big(s+u,X(u)\big) du+ e^{-\alpha (\tau \wedge T)} F\big((s+\tau) \wedge T,X(\tau \wedge (T-s))\big)\Big\},
\end{equation}
where the payoff function $F$ is continuous apart from a possible discontinuity on $[0, \infty) \times \partial \lo$.
\end{enumerate}

Optimal stopping problems of the first type were studied by Bensoussan and Lions \cite{Bens-Lions} for non-degenerate diffusion processes under assumptions that $G \le H$ and $\lo$ is bounded with a smooth boundary $\partial \lo$. They used penalization techniques similar  to ours but applied them on the level of variational inequalities. Generalizations were attempted by many authors in two directions: to extend the class of processes for which this approach applies and to relax assumptions on the functional; see, e.g., Menaldi \cite{menaldi} for the removal of restrictions on degeneracy of the diffusion, and Fleming, Soner \cite{flemingsoner} for relaxation of many assumptions regarding the functional and the coefficients of the diffusion via viscosity solutions approach. Functionals of the third type recently gained a lot of attention. Lamberton \cite{lamberton} obtained continuity and variational characterization of the value function for stopping of one-dimensional diffusions with bounded and Borel-measurable payoff function $F$. His result, however, cannot be extended to multidimensional diffusions. Bassan and Ceci studied stopping of  semi-continuous payoff functions $F$ for diffusions and certain jump-diffusions in one dimension (\cite{bassan2002a, bassan2002b}). They proved that value function for a functional with lower/upper semi-continuous function $F$ is lower/upper semi-continuous. The existence of optimal stopping times was also shown but without an explicit construction.

This paper complements existing theory in two aspects. Firstly, it provides results for a far larger family of Markov processes (in particular, in dimensions higher than $1$) and enables numerical treatment of the value function. Secondly, it relaxes constraints on the region $\lo$, which can be unbounded and with non-smooth boundary. Our main assumption is that the mapping $x \mapsto \ee^x \{ \ind{\tau_\lo < t} h(X_t)\}$ is continuous for any $t > 0$ and a continuous bounded function $h$. This assumption is non-restrictive as we show in Section \ref{sec:conditions_for_A1}. It is usually satisfied by solutions to nondegenerate stochastic differential equations driven by Brownian or Levy noise. Consequently our results, based on probabilistic arguments, provide regularity of solutions to differential or integrodifferential variational inequalities, related to appropriate stopping problems, with various types of discontinuity.

In our approach, the value function is approximated by a sequence of penalized value functions which are unique fixed points of contraction operators. These operators do not involve stopping or any other type of control, which makes them easier to compute numerically. Moreover, a discrete approximation of the state space can be used because we prove that the penalized functions are continuous.

The remaining of the paper is organized in the following way. Section \ref{sec:penalty} introduces the penalty method for functionals of the first type. The following section explores the properties of the value function, in particular, its behaviour on the boundary $\partial \lo$. In Section \ref{sec:cont_w} main results on optimal and $\ve$-optimal stopping and the convergence of penalized value functions are obtained. Sufficient conditions for the main assumption \ref{ass:a1} are formulated in Section \ref{sec:conditions_for_A1}. Functionals of the second type are studied in Section \ref{sec:general_func}. Section \ref{sec:infinite} extends these results to functionals of the third type with infinite time horizon. A finite time horizon setting is studied in Section \ref{sec:finite_time_func}. Important properties of Feller processes are listed in the Appendix.

\section{Penalty method}\label{sec:penalty}

We solve the stopping problem \eqref{eqn:simple_vf} using the penalty method introduced by Robin \cite{robin} and generalized by Stettner and Zabczyk \cite{SZ}. For $\beta > 0$ consider a penalized equation
\begin{equation}\label{eqn:def_w_beta}
\begin{aligned}
w^\beta (s, x) &= \ee^x\Big\{\int_0^{\tau_\lo} e^{-\alpha u} \Big[ f\big(s+u,X(u)\big)  + \beta \Big( G\big(s+u,X(u)\big) - w^\beta\big(s+u,X(u)\big) \Big)^+ \Big] du\\
&\mop{35} +e^{-\alpha \tau_\lo} H\big(s + \tau_\lo, X(\tau_\lo)\big)\Big\}.
\end{aligned}
\end{equation}

\begin{lemma}\label{lem:transf}
Assume that $g$ and $h$ are bounded functions and $\alpha > 0$. For any bounded progressively measurable process $(b(t))$, the following formulae
\begin{align}
z(s,x) &= \ee^x \Big\{\int_0^{\tau_\lo} e^{-\alpha u}g(s+u,x(u))du
+ e^{-\alpha \tau_\lo} h(s+\tau_\lo,x(\tau_\lo)) \Big\}, \label{eq:4}\\[10pt]
z(s,x)&= \ee^x \Big\{\int_0^{\tau_\lo} e^{-\alpha u - \int_0^u
b(t)dt}\big[g(s+u,x(u)) +b(u)z(s+u,x(u))\big]du \nonumber\\
&\mop{35}+ e^{-\alpha \tau_\lo - \int_0^{\tau_\lo} b(t)dt}h(s+\tau_\lo,x(\tau_\lo))\Big\} \label{eq:5}
\end{align}
are equivalent in the following sense: $z$ defined in \eqref{eq:4} is a solution to \eqref{eq:5};
and any solution to \eqref{eq:5} is of the form \eqref{eq:4}.
\end{lemma}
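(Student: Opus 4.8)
The plan is to prove the two implications in the statement separately. First I would verify that the function $z$ given by \eqref{eq:4} solves \eqref{eq:5}; then I would show that \eqref{eq:5} admits at most one bounded solution, so that any solution necessarily coincides with the one furnished by \eqref{eq:4}. Throughout I write $B_u=\int_0^u b(t)\,dt$, and I note at the outset that the boundedness of $g,h,b$ together with $\alpha>0$ makes every integral below absolutely convergent; this is what justifies the interchanges of integration and conditioning used later.

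For the first implication, fix $(s,x)$ and let $z$ be the function \eqref{eq:4}. The only term of \eqref{eq:5} containing $z$ is $T:=\ee^x\{\int_0^{\tau_\lo}e^{-\alpha u-B_u}b(u)\,z(s+u,X(u))\,du\}$. Since this $z$ is a genuine function of the state (it does not itself involve $b$), the Markov property gives, on $\{u<\tau_\lo\}$, the representation $z(s+u,X(u))=\ee^x\{\int_u^{\tau_\lo}e^{-\alpha(r-u)}g(s+r,X(r))\,dr+e^{-\alpha(\tau_\lo-u)}h(s+\tau_\lo,X(\tau_\lo))\mid\mathcal F_u\}$. Inserting this, using that $e^{-\alpha u-B_u}b(u)\ind{u<\tau_\lo}$ is $\mathcal F_u$-measurable, and applying Fubini with the tower property, I obtain $T=\ee^x\{\int_0^{\tau_\lo}\!\int_u^{\tau_\lo}e^{-\alpha r-B_u}b(u)\,g(s+r,X(r))\,dr\,du+\int_0^{\tau_\lo}e^{-\alpha\tau_\lo-B_u}b(u)\,h(s+\tau_\lo,X(\tau_\lo))\,du\}$. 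Interchanging the order of integration in the double integral and using the purely deterministic identity $\int_0^r b(u)e^{-B_u}\,du=1-e^{-B_r}$ (and the same with $r=\tau_\lo$) turns this into $T=\ee^x\{\int_0^{\tau_\lo}(1-e^{-B_r})e^{-\alpha r}g\,dr+(1-e^{-B_{\tau_\lo}})e^{-\alpha\tau_\lo}h\}$. Adding the two remaining terms $\ee^x\{\int_0^{\tau_\lo}e^{-\alpha u-B_u}g\,du\}$ and $\ee^x\{e^{-\alpha\tau_\lo-B_{\tau_\lo}}h\}$ of \eqref{eq:5}, the discount factors recombine ($e^{-B_r}+(1-e^{-B_r})=1$), and the right-hand side of \eqref{eq:5} collapses exactly to \eqref{eq:4}; hence $z$ solves \eqref{eq:5}.

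For the second implication it suffices to prove that \eqref{eq:5} has at most one bounded solution, since by the first part the function \eqref{eq:4} is already one such solution. If $z_1,z_2$ are bounded solutions, then $\delta=z_1-z_2$ satisfies the homogeneous relation $\delta(s,x)=\ee^x\{\int_0^{\tau_\lo}e^{-\alpha u-B_u}b(u)\,\delta(s+u,X(u))\,du\}$, so that $\|\delta\|_\infty\le\kappa\,\|\delta\|_\infty$ with $\kappa=\sup_{s,x}\ee^x\{\int_0^{\tau_\lo}e^{-\alpha u-B_u}|b(u)|\,du\}$. In the setting relevant to us $b\ge0$ (it comes from the penalisation term in \eqref{eqn:def_w_beta}); integrating by parts in $u$ gives $\ee^x\{\int_0^{\tau_\lo}e^{-\alpha u-B_u}b(u)\,du\}=\ee^x\{1-e^{-\alpha\tau_\lo-B_{\tau_\lo}}-\alpha\int_0^{\tau_\lo}e^{-\alpha u-B_u}\,du\}$, and the elementary bound $B_u\le \bar b\,u$, with $\bar b=\sup|b|$, shows pathwise that the subtracted quantity is at least $\alpha/(\alpha+\bar b)$. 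Hence $\kappa\le\bar b/(\alpha+\bar b)<1$, forcing $\delta\equiv0$, which gives uniqueness.

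The routine part is the bookkeeping that recombines the discount factors in the first implication; the step deserving explicit care is the justification of the Fubini interchange and the conditioning, which rests entirely on the absolute convergence supplied by $\alpha>0$ and the boundedness hypotheses. The genuinely delicate point is the uniqueness: the sup-norm contraction is transparent only because $b\ge0$ renders $\kappa<1$. For a sign-changing $b$ the factor $e^{-B_u}$ can grow and the single-step estimate need not contract, so there one must instead exploit the Volterra structure in the time variable $s$, with the discounting $e^{-\alpha u}$ damping the influence of the far future, rather than rely on one global contraction.
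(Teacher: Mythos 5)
Your two-part architecture --- verify that the function defined by \eqref{eq:4} solves \eqref{eq:5}, then show that \eqref{eq:5} has at most one bounded solution --- is the standard way such equivalences are proved and is, in substance, the argument the paper outsources to Lemma 1 of \cite{stettner}. In the regime $b \ge 0$ your proof is complete and correct: there $e^{-B_u}\le 1$, so every integrand is dominated by a constant multiple of $e^{-\alpha u}$ (or $u e^{-\alpha u}$ for the double integral), the Fubini/tower manipulations in the first implication are legitimate, and your contraction bound $\kappa \le \bar b/(\alpha+\bar b) < 1$, derived from the pathwise identity $\int_0^{\tau_\lo} (\alpha + b(u)) e^{-\alpha u - B_u}\,du = 1 - e^{-\alpha \tau_\lo - B_{\tau_\lo}}$, is exactly right. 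Since every invocation of the lemma in the paper takes $b$ with values in $[0,\beta]$ (Lemmas \ref{lem:solv_w_beta}, \ref{lem:wbeta_form_mbeta}, \ref{lem:bar_w_beta}, and the uses in Section \ref{sec:infinite}), this covers everything the paper actually needs.

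There is, however, a genuine flaw in the proposal as written, and it reaches further than you acknowledge. Your opening claim that boundedness of $g,h,b$ together with $\alpha>0$ makes ``every integral below absolutely convergent'' is false for sign-changing $b$: the horizon $\tau_\lo$ may be infinite, and then $e^{-\alpha u - B_u}$ can grow like $e^{(c-\alpha)u}$ when $b \equiv -c$ with $c > \alpha$. This invalidates the Fubini and conditioning steps of your \emph{first} implication, not merely the uniqueness estimate where you first notice the problem. Indeed the statement is then not even well posed: take $\lo = E$ (so $\tau_\lo \equiv \infty$; this case is genuinely used, see the proof of Lemma \ref{lem:inf_wbeta}), $g \equiv 1$ and $b \equiv -2\alpha$; then \eqref{eq:4} gives $z \equiv 1/\alpha$, while the integrand on the right of \eqref{eq:5} is $e^{\alpha u}\big(1 - 2\alpha\cdot\tfrac1\alpha\big) = -e^{\alpha u}$, whose expected integral is $-\infty$. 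So the hypothesis $b \ge 0$ (or at least $\inf b > -\alpha$) must be imposed at the start of your proof rather than invoked midway; in fairness, the lemma's own statement elides this restriction, inheriting it silently from the finite-horizon Lemma 1 of \cite{stettner}, where $e^{-B_u} \le e^{\|b\|_\infty (T-s)}$ renders any bounded $b$ harmless. Finally, your proposed remedy for sign-changing $b$ --- ``exploit the Volterra structure in the time variable $s$'' --- is precisely the finite-horizon device (backward induction from the terminal time $T$ on steps $\epsilon$ with $\|b\|_\infty e^{\|b\|_\infty\epsilon}\epsilon < 1$); it has no counterpart here, because the horizon is the random, possibly infinite $\tau_\lo$, the variable $s$ ranges over $[0,\infty)$ with no terminal condition to induct from, and, as the example shows, the equation itself can diverge.
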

\begin{proof}
We use similar arguments as in Lemma 1 of \cite{stettner}. The only difference is that now we have $\tau_\lo$
instead of the deterministic time $T-s$.
\end{proof}
Using this lemma, in a similar way as in Proposition 1 of \cite{stettner}, we show
\begin{lemma}\label{lem:solv_w_beta}
There is exactly one bounded measurable function $w^\beta$ that satisfies \eqref{eqn:def_w_beta}.
\end{lemma}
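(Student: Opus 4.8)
The plan is to recognize equation \eqref{eqn:def_w_beta} as a fixed-point problem and solve it via the Banach contraction principle. Define an operator $\Phi$ acting on the space of bounded measurable functions on $[0,\infty)\times E$, equipped with the supremum norm, by
$$
(\Phi w)(s,x) = \ee^x\Big\{\int_0^{\tau_\lo} e^{-\alpha u}\Big[f\big(s+u,X(u)\big) + \beta\big(G\big(s+u,X(u)\big) - w\big(s+u,X(u)\big)\big)^+\Big]du + e^{-\alpha\tau_\lo}H\big(s+\tau_\lo,X(\tau_\lo)\big)\Big\}.
$$
A function $w^\beta$ solves \eqref{eqn:def_w_beta} precisely when it is a fixed point of $\Phi$. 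First I would check that $\Phi$ maps bounded measurable functions to bounded measurable functions: boundedness of $f$, $G$, $H$ together with $\alpha>0$ controls every term, using the elementary bound $\int_0^{\tau_\lo} e^{-\alpha u}\,du \le 1/\alpha$ for the penalty and running-cost integrals.

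The key step is the contraction estimate. For two bounded functions $w_1, w_2$, I would subtract the two defining expressions and use that the nonnegative-part map $r \mapsto r^+$ is $1$-Lipschitz, so that
$$
\Big|\big(G - w_1\big)^+ - \big(G - w_2\big)^+\Big| \le \big|w_1 - w_2\big| \le \norma{w_1 - w_2}.
$$
This yields
$$
\big|(\Phi w_1)(s,x) - (\Phi w_2)(s,x)\big| \le \beta\,\norma{w_1 - w_2}\,\ee^x\Big\{\int_0^{\tau_\lo} e^{-\alpha u}\,du\Big\} \le \frac{\beta}{\alpha}\,\norma{w_1 - w_2}.
$$
As it stands this is a contraction only when $\beta < \alpha$, which is too restrictive since the penalty method requires $\beta \to \infty$. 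The remedy, and what I expect to be the main obstacle, is to absorb the penalty term into the discount rate. Applying Lemma \ref{lem:transf} with $b(u) = \beta\,\ind{G(s+u,X(u)) > w(s+u,X(u))}$ (a bounded progressively measurable process once $w$ is fixed) converts \eqref{eqn:def_w_beta} into the equivalent form \eqref{eq:4}, in which the effective discount factor becomes $e^{-\alpha u - \int_0^u b(t)\,dt}$.

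Exploiting this equivalence, I would instead estimate the difference using the representation \eqref{eq:4}: there the penalty no longer appears as an additive perturbation but inside an enhanced discount $e^{-(\alpha+\beta)u}$-type factor wherever the indicator is active. A careful comparison of the two resulting expressions, bounding the difference of the integrands and splitting according to the sign of $G - w_i$, produces a contraction constant strictly less than one that is \emph{uniform in} $\beta$; concretely the discount picks up the extra $\beta$ on exactly those intervals where the penalty is switched on, and the net effect is a factor of the form $\frac{\beta}{\alpha+\beta} < 1$. With a genuine contraction in hand, the Banach fixed-point theorem delivers a unique bounded measurable $w^\beta$, which is exactly the assertion of Lemma \ref{lem:solv_w_beta}. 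The delicate point throughout is keeping track of the progressive measurability of $b$ and ensuring the transformation of Lemma \ref{lem:transf} applies to the specific $b$ generated by the candidate fixed point, so that the two formulations remain rigorously equivalent at each iteration.
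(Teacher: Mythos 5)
You have correctly identified both the right framework (Banach's fixed point theorem applied to \eqref{eqn:def_w_beta}) and the genuine obstacle: the naive operator has Lipschitz constant $\beta/\alpha$, useless for large $\beta$, so the penalty must be absorbed into the discount via Lemma \ref{lem:transf}. This is exactly the paper's strategy. However, your execution of the key step fails. You apply Lemma \ref{lem:transf} with the input-dependent process $b_\phi(u)=\beta\ind{G(s+u,X(u))>\phi(s+u,X(u))}$, which leads to the operator
\begin{align*}
\mathcal S\phi(s,x)=\ee^x\Big\{\int_0^{\tau_\lo}&e^{-\alpha u-\int_0^u b_\phi(t)dt}\big[f+b_\phi G\big]\big(s+u,X(u)\big)du\\
&+e^{-\alpha\tau_\lo-\int_0^{\tau_\lo}b_\phi(t)dt}H\big(s+\tau_\lo,X(\tau_\lo)\big)\Big\}.
\end{align*}
Its fixed points are indeed exactly the solutions of \eqref{eqn:def_w_beta}, but $\mathcal S$ is \emph{not} a contraction; it is not even continuous in $\phi$, because the indicator defining $b_\phi$ flips when $\phi$ crosses $G$. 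Concretely, take $f\equiv 0$, $H\equiv 0$, $G\equiv 1$ and the constant functions $\phi_1=1-\ve$, $\phi_2=1+\ve$. Then $b_{\phi_1}\equiv\beta$ and $b_{\phi_2}\equiv 0$, so $\mathcal S\phi_1(s,x)=\frac{\beta}{\alpha+\beta}\,\ee^x\big\{1-e^{-(\alpha+\beta)\tau_\lo}\big\}>0$ for $x\in\lo$, while $\mathcal S\phi_2\equiv 0$; hence $\|\mathcal S\phi_1-\mathcal S\phi_2\|$ stays bounded away from $0$ while $\|\phi_1-\phi_2\|=2\ve\to 0$, ruling out any Lipschitz bound whatsoever. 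The factor $\beta/(\alpha+\beta)$ you hope to extract by ``splitting according to the sign of $G-w_i$'' does not materialize: on the set where the two indicators disagree, the integrands differ by $\beta G$, not by a multiple of $\|\phi_1-\phi_2\|$.

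The paper makes the one choice of $b$ that avoids this: the \emph{constant} process $b\equiv\beta$. Lemma \ref{lem:transf} then says that $w^\beta$ solves \eqref{eqn:def_w_beta} if and only if it is a fixed point of
\begin{align*}
\mathcal T\phi(s,x)=\ee^x\Big\{\int_0^{\tau_\lo}&e^{-(\alpha+\beta)u}\big[f+\beta(G-\phi)^++\beta\phi\big]\big(s+u,X(u)\big)du\\
&+e^{-(\alpha+\beta)\tau_\lo}H\big(s+\tau_\lo,X(\tau_\lo)\big)\Big\},
\end{align*}
in which the input enters only through $\beta(G-\phi)^++\beta\phi=\beta\max(G,\phi)$. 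Since $r\mapsto\max(G,r)$ is $1$-Lipschitz, one gets $\|\mathcal T\phi_1-\mathcal T\phi_2\|\le\beta\|\phi_1-\phi_2\|\int_0^\infty e^{-(\alpha+\beta)u}du=\frac{\beta}{\alpha+\beta}\|\phi_1-\phi_2\|$, a genuine contraction for each fixed $\beta$ (it is not, and need not be, uniform in $\beta$, contrary to what you claim). The moral is that the penalty must be moved into the discount \emph{uniformly}, keeping all dependence on the input inside the integrand, where the boosted discount $e^{-(\alpha+\beta)u}$ tames its Lipschitz constant, rather than inside the discount factor itself, where it destroys continuity of the operator.
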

\begin{proof}
By Lemma \ref{lem:transf} the penalized function $w^\beta$ can be equivalently written as
\begin{equation}\label{eqn:def_w_beta equiv}
\begin{aligned}
w^\beta (s, x) &= \ee^x\Big\{\int_0^{\tau_\lo} e^{-(\alpha+\beta) u} \Big[ f\big(s+u,X(u)\big)\\
&\mop{80} + \beta \Big( G\big(s+u,X(u)\big) - w^\beta\big(s+u,X(u)\big) \Big)^+ +\beta w^\beta\big(s+u,X(u)\big)  \Big] du\\
&\mop{30} +e^{-(\alpha+\beta) \tau_\lo} H\big(s + \tau_\lo, X(\tau_\lo)\big)\Big\}.
\end{aligned}
\end{equation}
Hence, $w^\beta$ is a fixed point of the operator $\mathcal T$ defined for measurable bounded functions $\phi$ as follows:
\begin{align*}
\mathcal T \phi(s,x) &= \ee^x\Big\{\int_0^{\tau_\lo} e^{-(\alpha+\beta) u} \Big[ f\big(s+u,X(u)\big)\\
&\mop{80} + \beta \Big( G\big(s+u,X(u)\big) - \phi\big(s+u,X(u)\big) \Big)^+ +\beta \phi\big(s+u,X(u)\big) \Big] du\\
&\mop{30} +e^{-(\alpha+\beta) \tau_\lo} H\big(s + \tau_\lo, X(\tau_\lo)\big)\Big\}.
\end{align*}
This operator is a contraction on the space of measurable bounded functions for any $\beta > 0$. Indeed, $\mathcal T \phi$ is identically equal to $H$ on $[0, \infty) \times \lo^c$, whereas the contraction property on $[0, \infty) \times \lo$ follows from the estimate
$$
\mathcal T \phi_1 - \mathcal T \phi_2 \le \frac{\beta}{\alpha + \beta} \|\phi_1 - \phi_2\|_\infty.
$$
This implies that $w^\beta$ is a unique fixed point of $\mathcal T$.
\end{proof}

We make the following assumption
\begin{assumption}
 \item[(A1)] \label{ass:a1}
The stopped semigroup $P^{\tau_\lo}_t h(x) = \ee^x \{ \ind{t < \tau_\lo} h(X(t))\}$ maps the space of continuous bounded functions into itself.
\end{assumption}
The following three lemmas prove continuity results which, in particular, will be used to show that $w^\beta$ is continuous.
\begin{lemma}\label{lem:joint Feller}
Under (A1), for a continuous bounded function $h: [0,\infty)\times E \to (-\infty, \infty)$ the mapping
$$
(s,x)\mapsto P^{\tau_\lo}_t h(s,x) := \ee^x \{ \ind{t < \tau_\lo} h(s+t,X(t))\}$$
is continuous.
\end{lemma}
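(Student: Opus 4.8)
The plan is to fix $t>0$ and a target point $(s_0,x_0)$, take an arbitrary sequence $(s_n,x_n)\to(s_0,x_0)$, and show $P^{\tau_\lo}_t h(s_n,x_n)\to P^{\tau_\lo}_t h(s_0,x_0)$. The natural first move is to separate the two sources of variation — the shift in the time argument and the change of starting point — by writing
\begin{align*}
P^{\tau_\lo}_t h(s_n,x_n) - P^{\tau_\lo}_t h(s_0,x_0)
&= \ee^{x_n}\big\{\ind{t<\tau_\lo}\big[h(s_n+t,X(t)) - h(s_0+t,X(t))\big]\big\}\\
&\quad + \Big(\ee^{x_n}\{\ind{t<\tau_\lo} h(s_0+t,X(t))\} - \ee^{x_0}\{\ind{t<\tau_\lo} h(s_0+t,X(t))\}\Big),
\end{align*}
and to treat the two terms, call them $(I)$ and $(II)$, separately. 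The term $(II)$ is immediate: the function $g(\cdot):=h(s_0+t,\cdot)$ is continuous and bounded on $E$, so $(II)=P^{\tau_\lo}_t g(x_n)-P^{\tau_\lo}_t g(x_0)$, and Assumption \ref{ass:a1} gives continuity of $x\mapsto P^{\tau_\lo}_t g(x)$, whence $(II)\to 0$.

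Controlling $(I)$ is the crux, because $h$ is only continuous, not uniformly continuous on all of $[0,\infty)\times E$, so $\sup_y|h(s_n+t,y)-h(s_0+t,y)|$ need not vanish. The remedy is to show that the laws of $X(t)$ started near $x_0$ concentrate on a single compact set, uniformly in $n$. Fix $\ve>0$ and choose a compact ball $K=\{x:\rho(x,x_0)\le R\}$ together with $\psi\in\mathcal{C}_0$ satisfying $0\le\psi\le 1$ and $\supp\psi\subseteq K$, taking $R$ so large that $P_t\psi(x_0)\ge 1-\ve$; this is possible because such $\psi$ can be chosen to increase to $1$ as $R\to\infty$, and $\prob^{x_0}(X(t)\in E)=1$ by conservativeness. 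Since $\psi\le\ind{\cdot\in K}$ we have $\prob^x(X(t)\in K)\ge P_t\psi(x)$, and by the weak Feller property $P_t\psi$ is continuous, so $P_t\psi(x_n)\ge 1-2\ve$ for all large $n$, giving $\prob^{x_n}(X(t)\notin K)\le 2\ve$ for such $n$.

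With this compact set in hand, the estimate
$$
|(I)|\le\sup_{y\in K}|h(s_n+t,y)-h(s_0+t,y)| + 2\norma{h}_\infty\,\prob^{x_n}(X(t)\notin K)
$$
finishes the argument: $h$ is uniformly continuous on the compact set $[0,\,s_0+t+1]\times K$, so the first summand tends to $0$ as $s_n\to s_0$, while the second is at most $4\ve\norma{h}_\infty$ for large $n$. Since $\ve$ was arbitrary, $(I)\to 0$, and combining this with $(II)\to 0$ yields the claim. The main obstacle throughout is the uniform tail control in the second paragraph; once the local tightness of $\{\prob^{x_n}\circ X(t)^{-1}\}$ is secured from the weak Feller property and the compactness of closed balls, the rest reduces to uniform continuity on a compact set and a single application of \ref{ass:a1}.
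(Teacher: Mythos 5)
Your proof is correct, and its skeleton coincides with the paper's: the same splitting into a time-shift term $(I)$ and a starting-point term $(II)$, with $(II)$ killed by a single application of (A1) and $(I)$ killed by uniform continuity of $h$ on a compact set plus a tail estimate. The one genuine difference is how you obtain the tail estimate. The paper invokes its Appendix result (Proposition \ref{prop:01}, quoted from \cite{palczewski2008}), which gives \emph{path-level} tightness uniformly over compact sets of starting points: a single compact $K$ with $\prob^{x_n}\{\exists_{u\in[0,s+t+1]}\,X(u)\notin K\}\le\ve$ for all $n$. You instead derive the weaker statement that is actually needed here --- tightness of the time-$t$ marginals $\prob^{x_n}\circ X(t)^{-1}$ --- from first principles: a compactly supported cutoff $\psi\in\mathcal{C}_0$ with $P_t\psi(x_0)\ge 1-\ve$, combined with continuity of $P_t\psi$ (weak Feller) to transfer the bound to nearby $x_n$. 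This buys self-containedness (no appeal to the path-regularity machinery behind Proposition \ref{prop:01}, only the semigroup property $P_t\,\mathcal{C}_0\subseteq\mathcal{C}_0$ and compactness of closed balls) and isolates exactly the minimal hypothesis the lemma uses, namely the law of $X(t)$ alone; the paper's route is shorter on the page because the stronger path-level tightness is already available and is reused elsewhere in the paper. Your appeal to conservativeness ($\prob^{x_0}(X(t)\in E)=1$) is consistent with the paper's standing framework of standard, non-exploding Feller processes, so it introduces no gap.
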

\begin{proof}
Let $(s_n,x_n)\to (s,x)$. By Proposition \ref{prop:01} for a given $\ve>0$ there is a compact set $K\subset E$ such that
$$\prob^{x_n} \left\{\exists_{u\in [0,s+t+1]} X(u)\notin K \right\}\leq \ve . $$
For $n$ large enough, i.e., such that $|s - s_n| \le 1$, we have
\begin{align*}
&\big|P^{\tau_\lo}_t h(s_n,x_n)-P^{\tau_\lo}_t h(s,x)\big|\\
&\mop{0}\le
\Big| \ee^{x_n} \{\ind{t < \tau_\lo} h(s_n+t,X(t))\}-\ee^{x_n} \{\ind{t < \tau_\lo} h(s+t,X(t))\} \Big| \\
&\mop{13}+
\Big| \ee^{x_n} \{\ind{t < \tau_\lo} h(s+t,X(t))\}-\ee^{x} \{\ind{t < \tau_\lo} h(s+t,X(t))\} \Big|\\
&\mop{0}\le
\ve \|h\|+ \Big| \ee^{x_n} \{\ind{t < \tau_\lo} \ind{X(t)\in K} h(s_n+t,X(t))-h(s+t,X(t))\}\Big| \\
&\mop{13}+
\Big| \ee^{x_n} \{\ind{t < \tau_\lo} h(s+t,X(t))\}-\ee^{x} \{\ind{t < \tau_\lo} h(s+t,X(t))\}\Big|\\
&\mop{0}= \ve \|h\|+a_n+b_n.
\end{align*}
The sequence $a_n$ converges to $0$ by uniform continuity of $h$ is on $[0,s+t+1]\times K$. Assumption (A1) implies $b_n\to 0$, which completes the proof.
\end{proof}

\begin{lemma}\label{lem:cont_stopped_potential}
Under assumption (A1) the mapping
$$
(s,x)\mapsto \ee^x \Big\{ \int_0^{\tau_\lo} e^{-\gamma u} h(s+u, X(u)) du \Big\}
$$
is continuous for any function $h \in \mathcal C([0, \infty)\times E)$ and $\gamma > 0$.
\end{lemma}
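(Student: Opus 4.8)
The plan is to reduce the statement to Lemma \ref{lem:joint Feller} by interchanging the time integral with the expectation and then invoking dominated convergence. First I would write the integrand as a quantity integrated against $u$ over all of $[0,\infty)$ by absorbing the stopping time into an indicator, namely
\begin{equation*}
\ee^x \Big\{ \int_0^{\tau_\lo} e^{-\gamma u} h(s+u, X(u))\, du \Big\}
= \ee^x \Big\{ \int_0^{\infty} \ind{u<\tau_\lo}\, e^{-\gamma u} h(s+u, X(u))\, du \Big\}.
\end{equation*}
Since $h$ is bounded and $e^{-\gamma u}$ is integrable on $[0,\infty)$, the map $(u,\omega)\mapsto \ind{u<\tau_\lo}e^{-\gamma u}h(s+u,X(u))$ is jointly measurable and absolutely integrable with respect to the product of Lebesgue measure and $\prob^x$; Tonelli justifies finiteness and Fubini then permits the interchange, giving
\begin{equation*}
\ee^x \Big\{ \int_0^{\tau_\lo} e^{-\gamma u} h(s+u, X(u))\, du \Big\}
= \int_0^{\infty} e^{-\gamma u}\, P^{\tau_\lo}_u h(s,x)\, du,
\end{equation*}
where $P^{\tau_\lo}_u h(s,x) = \ee^x\{\ind{u<\tau_\lo} h(s+u,X(u))\}$ is exactly the object treated in Lemma \ref{lem:joint Feller}.

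Next I would fix a sequence $(s_n,x_n)\to(s,x)$ and pass to the limit under the integral sign. By Lemma \ref{lem:joint Feller}, for each fixed $u>0$ the integrand $e^{-\gamma u}P^{\tau_\lo}_u h(s_n,x_n)$ converges to $e^{-\gamma u}P^{\tau_\lo}_u h(s,x)$. The domination is immediate since $|P^{\tau_\lo}_u h(s,x)|\le \|h\|$ for every $(s,x)$, so that $e^{-\gamma u}|P^{\tau_\lo}_u h(s_n,x_n)|\le e^{-\gamma u}\|h\|$, and the right-hand side is integrable over $[0,\infty)$ and independent of $n$. The dominated convergence theorem then yields
\begin{equation*}
\int_0^{\infty} e^{-\gamma u}\, P^{\tau_\lo}_u h(s_n,x_n)\, du \longrightarrow \int_0^{\infty} e^{-\gamma u}\, P^{\tau_\lo}_u h(s,x)\, du,
\end{equation*}
which is precisely the asserted continuity.

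The bulk of the work is already carried by Lemma \ref{lem:joint Feller}, so the only genuinely delicate points are the two measure-theoretic interchanges. I expect the main obstacle to be the Fubini step: one must confirm joint measurability of the integrand in $(u,\omega)$ (which follows from progressive measurability of $X$ and the stopping structure of $\tau_\lo$) and absolute integrability, both of which are secured by the boundedness of $h$ together with the exponential factor $e^{-\gamma u}$. The value $u=0$, where the indicator $\ind{u<\tau_\lo}$ may be discontinuous, is harmless since it is a single point of Lebesgue-null measure and plays no role in the integral. Once these routine verifications are in place, the dominated-convergence argument is entirely standard.
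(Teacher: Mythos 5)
Your proof is correct and takes essentially the same route as the paper's: Fubini's theorem rewrites the quantity as $\int_0^\infty e^{-\gamma u}\, P^{\tau_\lo}_u h(s,x)\, du$, Lemma \ref{lem:joint Feller} gives continuity of the integrand in $(s,x)$ for each fixed $u$, and dominated convergence (with dominating function $e^{-\gamma u}\|h\|$) concludes. The paper states this in three lines; your version merely spells out the measurability and domination details.
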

\begin{proof}
Fubini's theorem implies
$$
\ee^x \Big\{ \int_0^{\tau_\lo} e^{-\gamma u} h(s+u, X(u)) du \Big\} = \int_0^\infty e^{-\gamma u} \varphi (s, u, x) du,
$$
where $\varphi(s, u,x) = \ee^x \{ \ind{u < \tau_\lo} h(s + u, X(u) \}$. This function is continuous in $(s,x)$ for any fixed $u \ge 0$ by Lemma \ref{lem:joint Feller}. Dominated convergence theorem concludes.
\end{proof}

\begin{lemma}\label{lem:formerA2}
Under (A1) for $\alpha > 0$ and a continuous bounded function $h:[0,\infty)\times E \mapsto (-\infty,\infty)$ the mapping
$$
(s,x)\mapsto \ee^x \Big\{e^{-\alpha \tau_\lo} h(s+\tau_\lo, X(\tau_\lo)) \Big\}
$$
is continuous.
\end{lemma}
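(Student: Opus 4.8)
The plan is to reduce the statement to the two continuity results already established, Lemma \ref{lem:cont_stopped_potential} and an unstopped version of Lemma \ref{lem:joint Feller}, by first proving it for boundary data of a special form and then approximating a general $h$ by such data. As a preliminary I would record that, for any continuous bounded $\phi$ and any $\gamma>0$, the space-time $\gamma$-potential
\[
R_\gamma\phi(s,x):=\ee^x\Big\{\int_0^\infty e^{-\gamma u}\phi(s+u,X(u))\,du\Big\}
\]
is continuous and bounded: by Fubini $R_\gamma\phi(s,x)=\int_0^\infty e^{-\gamma u}\ee^x\{\phi(s+u,X(u))\}\,du$, the integrand is continuous in $(s,x)$ for each fixed $u$ by the argument of Lemma \ref{lem:joint Feller} with $\tau_\lo$ replaced by $+\infty$ (which uses only the weak Feller property and the tightness of Proposition \ref{prop:01}, not (A1)), and dominated convergence concludes.

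The key step is an identity from the strong Markov property at $\tau_\lo$. Conditioning on $\ef_{\tau_\lo}$ and using $\ee^x\{\int_{\tau_\lo}^\infty e^{-\alpha u}\phi(s+u,X(u))\,du\mid\ef_{\tau_\lo}\}=e^{-\alpha\tau_\lo}R_\alpha\phi(s+\tau_\lo,X(\tau_\lo))$ on $\{\tau_\lo<\infty\}$ gives
\[
\ee^x\{e^{-\alpha\tau_\lo}R_\alpha\phi(s+\tau_\lo,X(\tau_\lo))\}=R_\alpha\phi(s,x)-\ee^x\Big\{\int_0^{\tau_\lo}e^{-\alpha u}\phi(s+u,X(u))\,du\Big\}.
\]
The right-hand side is continuous by the preliminary step and Lemma \ref{lem:cont_stopped_potential}, so the claim holds whenever the boundary function is an $\alpha$-potential $R_\alpha\phi$.

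To cover an arbitrary continuous bounded $h$ I would exploit the resolvent equation $R_\beta=R_\alpha\big(I-(\beta-\alpha)R_\beta\big)$ for the space-time semigroup: it shows that for every $\beta>0$ one has $R_\beta h=R_\alpha\phi$ with $\phi=h-(\beta-\alpha)R_\beta h$ continuous and bounded, hence $\beta R_\beta h=R_\alpha(\beta\phi)$ is again an $\alpha$-potential. Thus the identity above applies with $\beta R_\beta h$ as boundary data, and $(s,x)\mapsto\ee^x\{e^{-\alpha\tau_\lo}(\beta R_\beta h)(s+\tau_\lo,X(\tau_\lo))\}$ is continuous for every $\beta$.

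Finally I would let $\beta\to\infty$. Writing $\beta R_\beta h(s,x)-h(s,x)=\int_0^\infty e^{-v}\big[\ee^x\{h(s+v/\beta,X(v/\beta))\}-h(s,x)\big]\,dv$ and using right-continuity of the paths together with Proposition \ref{prop:01}, one obtains $\beta R_\beta h\to h$ uniformly on compact subsets of $[0,\infty)\times E$. Combining this with the bound
\[
\big|\ee^x\{e^{-\alpha\tau_\lo}(h-\beta R_\beta h)(s+\tau_\lo,X(\tau_\lo))\}\big|\le \sup_{[s,s+T]\times K}|h-\beta R_\beta h|+\|h-\beta R_\beta h\|_\infty\big(\prob^x\{\exists_{u\le T}\,X(u)\notin K\}+e^{-\alpha T}\big),
\]
valid for any $T>0$ and compact $K$, shows that the continuous functions $\ee^x\{e^{-\alpha\tau_\lo}(\beta R_\beta h)(s+\tau_\lo,X(\tau_\lo))\}$ converge to the target uniformly on compacts, whence its continuity. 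The main obstacle is precisely this last step: since $h$ is only bounded (not vanishing at infinity) one must control the tail contributions through tightness and the discount factor $e^{-\alpha\tau_\lo}$, and establish the uniform-on-compacts convergence $\beta R_\beta h\to h$ rather than mere pointwise convergence.
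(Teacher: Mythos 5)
Your proof is correct, and its first half coincides with the paper's own argument: both start from boundary data of potential form $R_\alpha\phi$, use the strong Markov property at $\tau_\lo$ to write $\ee^x \big\{ e^{-\alpha\tau_\lo} R_\alpha\phi(s+\tau_\lo, X(\tau_\lo)) \big\} = R_\alpha\phi(s,x) - \ee^x \big\{ \int_0^{\tau_\lo} e^{-\alpha u} \phi(s+u,X(u))\,du \big\}$, and invoke Lemma \ref{lem:cont_stopped_potential} for the stopped potential term. Where you genuinely diverge is in the extension to arbitrary continuous bounded $h$. The paper proceeds in two stages: it first quotes the density of potentials in $\mathcal C_0([0,\infty)\times E)$ (Lemma 3.1.6 of \cite{dynkin}) to settle $h \in \mathcal C_0$, and then passes to bounded continuous $h$ via the explicit cutoff $r(s,x) = e^{-\rho(x,L)-(s-(S+T))^+} h(s,x)$, which agrees with $h$ exactly on $[0,S+T]\times L$, so that the final error estimate requires only tightness and discounting. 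You instead observe, via the resolvent equation, that $\beta R_\beta h$ is itself an $\alpha$-potential of a continuous bounded function for every bounded continuous $h$, so the key identity applies directly to these Yosida approximants, and you conclude with the uniform-on-compacts convergence $\beta R_\beta h \to h$ followed by the same tightness-plus-discounting tail estimate. This buys you a one-stage argument that never passes through $\mathcal C_0$ and avoids the density citation altogether; the price is that your approximants agree with $h$ only asymptotically rather than exactly on a large compact, so the uniform convergence must be justified. On that single point your citation is a little thin: right-continuity of paths together with Proposition \ref{prop:01} gives only pointwise convergence of $\beta R_\beta h$ to $h$; uniformity on compact subsets of $[0,\infty)\times E$ additionally needs the uniform small-time displacement estimate of Proposition \ref{prop:fellercont} (equivalently, a space-time version of Proposition \ref{cor:03}(ii)) combined with uniform continuity of $h$ on compacts. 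With that reference inserted, your argument is complete and fully rigorous.
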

\begin{proof}
Assume first that
\begin{equation}\label{eqn:decompos1}
h(s,x) = \ee^x \Big\{ \int_0^\infty e^{-\alpha u} \tl h \big(s+u, X(u)\big) du \Big\}
\end{equation}
for a continuous bounded function $\tl h$. Using this decomposition we write
\begin{align*}
H(s,x) &= \ee^x \Big\{e^{-\alpha \tau_\lo} h(s+\tau_\lo, X(\tau_\lo)) \Big\}
=
\ee^x \Big\{ \int_{\tau_\lo}^\infty e^{-\alpha u} \tl h \big(s+u, X(u)\big) du \Big\}\\
&=
h(s,x) - \ee^x \Big\{ \int_0^{\tau_\lo} e^{-\alpha u} \tl h \big(s+u, X(u)\big) du \Big\}.
\end{align*}
Hence, $H$ is continuous by Lemma \ref{lem:cont_stopped_potential}.

By the weak Feller property of $(X(t))$ functions of the form \eqref{eqn:decompos1} are dense in $C_0 ([0, \infty) \times E)$ (see Lemma 3.1.6 in \cite{dynkin}).
Hence, $H$ is continuous for $h$ in $C_0$. The extension of this result to continuous bounded functions $h$ uses Proposition \ref{prop:01} in the appendix. Fix a compact set $K \subseteq E$ and $S \ge 0$. For any $T, \ve > 0$ there is a compact set $L \subseteq E$ such that
$$
\prob^x\big( X(t) \notin L \text{ for some $t \in [0, T]$} \big) < \ve, \qquad \forall\ x \in K.
$$
Define $r(s,x) = e^{-\rho(x, L) - (s - (S + T))^+} h(s,x)$, where $\rho(x, L)$ denotes the distance of $x$ from the set $L$. Such $r$ is in $C_0([0, \infty) \times E)$ and by preceding results $R(s,x) = \ee^x \Big\{e^{-\alpha \tau_\lo} r(s+\tau_\lo, X(\tau_\lo)) \Big\}$ is continuous. By definition $r \equiv h$ on $[0, S+T] \times L$. Let $A = \{X(t) \notin L \text{ for some $t \in [0, T]$} \}$.  The distance of $R$ and $H$ is bounded in the following way:
\begin{align*}
\|H(s,x) - R(s,x)\|
&= \ee^x \big\{ e^{-\alpha \tau_\lo} (h-r) (s + \tau_\lo, X(\tau_\lo)) \big\} \\
&= \ee^x \big\{ e^{-\alpha \tau_\lo \wedge T} \ind{A^c} (h-r)(s + \tau_\lo \wedge T, X(\tau_\lo \wedge T)) \big\}\\
&\mop{13}+ \ee^x \big\{ \ind{A} e^{-\alpha \tau_\lo} (h-r) (s + \tau_\lo, X(\tau_\lo)) \big\}\\
&\mop{13}+ \ee^x \Big\{ \ind{A^c} \Big(e^{-\alpha \tau_\lo} (h-r) (s + \tau_\lo, X(\tau_\lo)) \\
& \mop{46}- e^{-\alpha \tau_\lo \wedge T}(h-r)(s + \tau_\lo \wedge T, X(\tau_\lo \wedge T)) \Big)\Big\}\\
&\le
0 + \|h-r\| \ve + e^{-\alpha T} 2\|h-r\|.
\end{align*}
This implies $\|H(s,x) - R(s,x)\| \le 2 \|h\| (\ve + 2e^{-\alpha T})$ for $(s,x) \in [0, S] \times K$. Since $T$ and $\ve$ are arbitrary this implies continuity of $H$ on $[0, S] \times K$. Hence, $H$ is continuous on its whole domain by the arbitrariness of $S, K$.
\end{proof}

\begin{cor}\label{cor:cont_wbeta}
Under (A1), the unique bounded solution $w^\beta$ of \eqref{eqn:def_w_beta} is continuous.
\end{cor}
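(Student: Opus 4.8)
The plan is to exploit the contraction structure established in the proof of Lemma \ref{lem:solv_w_beta}. Since $\mathcal T$ is a contraction on the Banach space of bounded measurable functions on $[0,\infty)\times E$ equipped with the supremum norm, and the continuous bounded functions form a closed (hence complete) subspace of it, it suffices to show that $\mathcal T$ maps this subspace into itself. Then $\mathcal T$ restricted to the continuous bounded functions is a contraction on a complete space, so it admits a unique fixed point there; by uniqueness of the fixed point in the larger space (Lemma \ref{lem:solv_w_beta}), this fixed point must coincide with $w^\beta$, which is therefore continuous. The key observation underlying this packaging is the standard fact that a uniform limit of continuous functions is continuous, which is precisely what transfers continuity from the iterates $\mathcal T^n \phi_0$ to their limit.

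So the core of the argument is to verify that $\phi$ continuous and bounded implies $\mathcal T \phi$ continuous and bounded. I would split $\mathcal T \phi$ into its two natural pieces. The running part is
$$
\ee^x\Big\{\int_0^{\tau_\lo} e^{-(\alpha+\beta)u}\, h_\phi\big(s+u, X(u)\big)\, du\Big\},
$$
where $h_\phi = f + \beta (G-\phi)^+ + \beta \phi$. Since $f$, $G$, and $\phi$ are continuous and bounded and the positive-part map $r \mapsto r^+$ is continuous, $h_\phi$ is continuous and bounded; hence this term is continuous in $(s,x)$ by Lemma \ref{lem:cont_stopped_potential} applied with $\gamma = \alpha + \beta > 0$. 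The terminal part is
$$
\ee^x\Big\{e^{-(\alpha+\beta)\tau_\lo}\, H\big(s+\tau_\lo, X(\tau_\lo)\big)\Big\},
$$
which is continuous by Lemma \ref{lem:formerA2} applied with $\alpha$ replaced by $\alpha + \beta > 0$, since $H$ is continuous and bounded. Adding the two continuous terms yields continuity of $\mathcal T \phi$, while boundedness is immediate from the boundedness of $f$, $G$, $H$, and $\phi$.

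I do not anticipate any serious obstacle here, since the two preceding lemmas carry out all the analytic work; the only point deserving a moment's care is the continuity of $(G-\phi)^+$, which follows from composition of continuous maps. Phrasing the proof through the closed-subspace and fixed-point mechanism is what lets me avoid re-running the $\ve$-estimates already contained in Lemmas \ref{lem:cont_stopped_potential} and \ref{lem:formerA2}.
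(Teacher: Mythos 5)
Your proposal is correct and follows essentially the same route as the paper: both show, via Lemmas \ref{lem:cont_stopped_potential} and \ref{lem:formerA2} (the latter applied with discount rate $\alpha+\beta$), that the operator $\mathcal T$ from Lemma \ref{lem:solv_w_beta} maps continuous bounded functions into themselves, and then invoke the contraction/fixed-point argument on this closed subspace to conclude that $w^\beta$ is continuous. Your write-up merely makes explicit some details (the splitting of $\mathcal T\phi$, the continuity of $(G-\phi)^+$, the completeness of the subspace) that the paper leaves implicit.
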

\begin{proof}
Lemmas \ref{lem:cont_stopped_potential} and \ref{lem:formerA2} imply that the operator $\mathcal T$ introduced in the proof of Lemma \ref{lem:solv_w_beta} maps the space of continuous bounded functions into itself. Since this operator is a contraction on the space of bounded measurable functions it is a contraction on the space of continous bounded functions. This implies that $w^\beta$ as a unique fixed point is continuous.
\end{proof}

To establish convergence of $w^\beta$ to $w$ as $\beta \to \infty$ we introduce two additional representations of $w^\beta$.
\begin{lemma} \label{lem:wbeta_form_tau}
Under assumption (A1), the function $w^\beta$ has the following equivalent representation:
\begin{equation}\label{eqn:wbeta_form_tau}
\begin{aligned}
w^\beta (s, x) = \sup_\tau \Big\{ J(s, x, \tau) - \ee^x \big\{\ind{\tau < \tau_\lo} e^{-\alpha \tau}
\big( G - w^\beta \big)^+\big(s+\tau,X(\tau)\big) \big\}\Big\}.
\end{aligned}
\end{equation}
\end{lemma}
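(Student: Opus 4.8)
The plan is to reduce the asserted identity to a standard optimal stopping problem and then to identify $w^\beta$ as its value function by a Dynkin-type argument.

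First I would rewrite the right-hand side of \eqref{eqn:wbeta_form_tau}. On the event $\{\tau < \tau_\lo\}$ the functional $J$ contributes the terminal reward $G$, from which we subtract $(G - w^\beta)^+$; the pointwise identity $G - (G - w^\beta)^+ = G \wedge w^\beta$ then shows that the expression to be maximized equals
$$
\tilde J(s,x,\tau) := \ee^x\Big\{\int_0^{\tau\wedge\tau_\lo} e^{-\alpha u} f(s+u,X(u))\,du + \ind{\tau<\tau_\lo} e^{-\alpha\tau} (G\wedge w^\beta)(s+\tau,X(\tau)) + \ind{\tau\ge\tau_\lo} e^{-\alpha\tau_\lo} H(s+\tau_\lo, X(\tau_\lo))\Big\}.
$$
Hence \eqref{eqn:wbeta_form_tau} is equivalent to $w^\beta = \sup_\tau \tilde J(\cdot,\cdot,\tau)$, i.e.\ to identifying $w^\beta$ as the value of the stopping problem with in-domain reward $G \wedge w^\beta$ and boundary reward $H$.

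The core step is a Dynkin-type formula. Since $w^\beta$ solves \eqref{eqn:def_w_beta} from every starting point, the penalized payoff is an additive functional up to $\tau_\lo$, and applying the strong Markov property at $\tau\wedge\tau_\lo$ yields, for an arbitrary stopping time $\tau$,
$$
w^\beta(s,x) = \ee^x\Big\{\int_0^{\tau\wedge\tau_\lo} e^{-\alpha u}\big[f + \beta(G - w^\beta)^+\big](s+u, X(u))\,du + \ind{\tau<\tau_\lo} e^{-\alpha\tau} w^\beta(s+\tau, X(\tau)) + \ind{\tau\ge\tau_\lo} e^{-\alpha\tau_\lo} H(s+\tau_\lo, X(\tau_\lo))\Big\}.
$$
Indeed, on $\{\tau\ge\tau_\lo\}$ the integral already runs to $\tau_\lo$ and the boundary term $H$ is retained, while on $\{\tau<\tau_\lo\}$ the Markov property restarts the penalized equation at $X(\tau)$, collapsing the post-$\tau$ contribution to $w^\beta(s+\tau, X(\tau))$. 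Subtracting $\tilde J(s,x,\tau)$ and using $w^\beta - (G\wedge w^\beta) = (w^\beta - G)^+$ gives
$$
w^\beta(s,x) - \tilde J(s,x,\tau) = \ee^x\Big\{\int_0^{\tau\wedge\tau_\lo} e^{-\alpha u}\beta(G - w^\beta)^+(s+u,X(u))\,du + \ind{\tau<\tau_\lo} e^{-\alpha\tau} (w^\beta - G)^+(s+\tau, X(\tau))\Big\} \ge 0,
$$
since both the integrand and the boundary term are nonnegative. This proves $w^\beta \ge \sup_\tau \tilde J$.

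To attain the supremum I would take $\tau^\ast = \inf\{u \ge 0 : G(s+u, X(u)) \ge w^\beta(s+u, X(u))\}$, the hitting time of the closed set $C = \{G \ge w^\beta\}$, which is a stopping time because $G$ and $w^\beta$ are continuous (Corollary \ref{cor:cont_wbeta}). For $u < \tau^\ast$ one has $G < w^\beta$, so $(G - w^\beta)^+$ vanishes on $[0,\tau^\ast)$ and the integral term is zero; and by right-continuity of the paths together with closedness of $C$, the space-time process lies in $C$ at $\tau^\ast$, so $(w^\beta - G)^+(s+\tau^\ast, X(\tau^\ast)) = 0$ on $\{\tau^\ast < \tau_\lo\}$. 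Both terms vanish, whence $w^\beta = \tilde J(s,x,\tau^\ast)$ and the supremum is attained. The main obstacle is the rigorous justification of the Dynkin-type formula with the random horizon $\tau_\lo$: one must split carefully on $\{\tau<\tau_\lo\}$ and $\{\tau\ge\tau_\lo\}$ and invoke the strong Markov property so that the post-$\tau$ contribution reduces to $w^\beta$ on the first event while $H$ is correctly kept on the second; the only other delicate point is the membership $X(\tau^\ast)\in C$ at the possibly-jump hitting time, which follows from right-continuity and closedness of $C$.
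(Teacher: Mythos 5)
Your proposal is correct and follows essentially the same route as the paper: the Markov-property identity for $w^\beta$ stopped at $\tau\wedge\tau_\lo$, dropping the nonnegative penalty terms to get $w^\beta \ge \sup_\tau \tilde J$, and attaining equality with the hitting time of $\{G \ge w^\beta\}$, using continuity of $G$ and $w^\beta$ (Corollary \ref{cor:cont_wbeta}) plus right-continuity of paths. Your explicit reformulation via $G - (G-w^\beta)^+ = G\wedge w^\beta$ and the displayed computation of $w^\beta - \tilde J$ are just a more detailed write-up of the paper's inequalities \eqref{eqn:wbeta_stop1}--\eqref{eqn:wbeta_stop2}.
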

\begin{proof}
Markov property implies that for any stopping time $\sigma$ the following equality is satisfied:
\begin{align*}
&w^\beta(s,x)= \ee^x \Big\{\int_0^{\tau_\lo \wedge \sigma} e^{-\alpha u}\Big[ f\big(s+u,X(u)\big)+ \beta \Big( G\big(s+u,X(u)\big) - w^\beta\big(s+u,X(u)\big) \Big)^+ \Big]du\\
&\mop{75}+ e^{-\alpha (\tau_\lo \wedge \sigma)}w^\beta(s+\tau_\lo \wedge \sigma,X(\tau_\lo \wedge \sigma))\Big\}.
\end{align*}
This gives the lower bound:
\begin{multline}\label{eqn:wbeta_stop1}
w^\beta(s,x) \ge\ee^x \Big\{\int_0^{\tau_\lo \wedge \sigma} e^{-\alpha u} f\big(s+u,X(u)\big) du
+ e^{-\alpha (\tau_\lo \wedge \sigma)}w^\beta(s+\tau_\lo \wedge \sigma,X(\tau_\lo \wedge \sigma))\Big\}.
\end{multline}
Further,
\begin{equation}\label{eqn:wbeta_stop2}
\begin{aligned}
w^\beta(s,x) &\ge \ee^x \Big\{\int_0^{\tau_\lo \wedge \sigma} e^{-\alpha u} f\big(s+u,X(u)\big) du\\
&\mop{30}+ \ind{\sigma < \tau_\lo} e^{-\alpha \sigma} \big[G - (G - w^\beta)^+\big]\big(s+\sigma, X(\sigma)\big)+ \ind{\sigma \ge \tau_\lo} e^{-\alpha \tau_\lo}H(s+\tau_\lo,X(\tau_\lo))\Big\},
\end{aligned}
\end{equation}
because $w^\beta = H$ on $\lo^c$ and $G - (G-w^\beta)^+ \le w^\beta$. Define a stopping time
$$
\sigma^* = \inf\{u \ge 0: w^\beta (s+u, X(u)) \le G(s+u, X(u)) \}.
$$
Due to the continuity of $G$ and $w^\beta$ (see Corollary \ref{cor:cont_wbeta}) we have
$$
\ind{\sigma^* < \tau_\lo} w^\beta (s+\sigma^*, X(\sigma^*)) \le \ind{\sigma^* < \tau_\lo} G(s+\sigma^*, X(\sigma^*)).
$$
This implies that for $\sigma^*$ the inequalities in \eqref{eqn:wbeta_stop1} and \eqref{eqn:wbeta_stop2} become equalities and \eqref{eqn:wbeta_form_tau} follows easily.
\end{proof}

\begin{lemma} \label{lem:wbeta_form_mbeta}
The function $w^\beta$ has the following equivalent representation:
\begin{equation}\label{eqn:wbeta_form_mbeta}
\begin{aligned}
w^{\beta}(s,x)
&= \sup_{b\in M_\beta} \ee^x\Big\{\int_0^{\tau_\lo}e^{-\alpha u - \int_0^{u}b(t)dt} \Big[f\big(s+u,X(u)\big)+ b(u)G\big(s+u,X(u)\big)\Big]du\\
&\mop{50}+e^{-\alpha \tau_\lo - \int_0^{\tau_\lo} b(t)dt} H\big(s+\tau_\lo,X(\tau_\lo)\big)\Big\},
\end{aligned}
\end{equation}
where $M_\beta$ is the class of progressively measurable processes with values in $[0,\beta]$.
\end{lemma}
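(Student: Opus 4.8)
The plan is to recognize the penalized equation \eqref{eqn:def_w_beta} as a linear equation whose penalty term $\beta(G-w^\beta)^+$ can be linearized by introducing a control $b(u)$ that selects, at each time, whether the penalty is ``active.'' The key observation is that for any real number $a$ we have the variational identity $\beta a^+ = \sup_{0 \le c \le \beta} c\, a$, where the supremum is attained at $c = \beta$ when $a > 0$ and at $c = 0$ when $a \le 0$. Applying this pointwise with $a = G(s+u,X(u)) - w^\beta(s+u,X(u))$, the penalty term in \eqref{eqn:def_w_beta} becomes
\[
\beta\big(G - w^\beta\big)^+(s+u,X(u)) = \sup_{0 \le c \le \beta} c\,\big(G - w^\beta\big)(s+u,X(u)).
\]

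First I would fix an arbitrary $b \in M_\beta$ and apply Lemma \ref{lem:transf} to rewrite \eqref{eqn:def_w_beta}. Observe that since $\beta(G-w^\beta)^+ \ge b(u)(G - w^\beta)$ pointwise for every $b \in M_\beta$, the function $w^\beta$ satisfies the equation \eqref{eq:4}--\eqref{eq:5} with $g(s+u,\cdot) = f(s+u,\cdot) + b(u)G(s+u,\cdot)$ replaced by an inequality; more precisely, writing the defining equation with the linearized integrand $f + b(u)G - b(u)w^\beta$ and using $\beta(G-w^\beta)^+ \ge b(u)(G-w^\beta)$, one gets that $w^\beta$ dominates the right-hand side of the equation corresponding to control $b$. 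By Lemma \ref{lem:transf}, solving that equation in the form \eqref{eq:4} with the discounting $e^{-\alpha u - \int_0^u b(t)\,dt}$ yields exactly the expression inside the supremum in \eqref{eqn:wbeta_form_mbeta}; a comparison (or Gronwall-type) argument then gives that $w^\beta$ is an upper bound, so $w^\beta \ge \sup_{b \in M_\beta}(\cdots)$.

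For the reverse inequality I would exhibit the optimal control. Define $b^*(u) = \beta\,\ind{w^\beta(s+u,X(u)) < G(s+u,X(u))}$, which is progressively measurable (by the continuity of $w^\beta$ from Corollary \ref{cor:cont_wbeta} and of $G$) and takes values in $[0,\beta]$, so $b^* \in M_\beta$. With this choice the inequality $\beta(G-w^\beta)^+ \ge b^*(u)(G-w^\beta)$ becomes an equality for almost every $u$, hence $w^\beta$ solves \eqref{eq:5} with $b = b^*$ exactly, and Lemma \ref{lem:transf} identifies $w^\beta$ with the corresponding term \eqref{eq:4} under control $b^*$. Therefore $w^\beta$ equals the value attained at $b^*$, which shows $w^\beta \le \sup_{b \in M_\beta}(\cdots)$, completing the proof.

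I expect the main obstacle to be the careful bookkeeping in the comparison step: transforming between the fixed-point form \eqref{eqn:def_w_beta} and the discounted form \eqref{eq:5} via Lemma \ref{lem:transf} for a \emph{given} process $b$, and verifying that the pointwise inequality $\beta(G-w^\beta)^+ \ge b(u)(G-w^\beta)$ indeed propagates to an inequality between the full expectations. This requires confirming that the transformation of Lemma \ref{lem:transf} is monotone in the integrand $g$, which follows because the representation \eqref{eq:4} is an expectation of a nonnegative kernel against $g$; establishing this monotonicity rigorously is the one genuinely non-routine point.
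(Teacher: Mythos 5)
Your overall strategy coincides with the paper's: linearize the penalty via $\beta a^+=\sup_{0\le c\le\beta}ca$, pass through Lemma \ref{lem:transf}, prove the inequality $\ge$ for every $b\in M_\beta$, and obtain equality with the bang--bang feedback control $b^*(u)=\beta\ind{G \ge w^\beta}$ (your version with strict inequality works equally well, since on the set $\{G=w^\beta\}$ both sides of the pointwise identity vanish for any value of the control). The attainment step is sound. The gap is in your justification of the inequality $w^\beta\ge(\cdot)$ for a fixed $b$. You apply the pointwise bound $\beta(G-w^\beta)^+\ge b(u)(G-w^\beta)$ \emph{before} transforming, concluding that $w^\beta$ dominates $\Phi_b(w^\beta)$, where
$$
\Phi_b(z)(s,x)=\ee^x\Big\{\int_0^{\tau_\lo}e^{-\alpha u}\big[f+b(u)(G-z)\big]\big(s+u,X(u)\big)\,du+e^{-\alpha\tau_\lo}H\big(s+\tau_\lo,X(\tau_\lo)\big)\Big\},
$$
and then invoke a ``comparison (or Gronwall-type) argument'' to pass from this supersolution property to $w^\beta\ge\mathrm{fix}(\Phi_b)$. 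But in $\Phi_b$ the unknown enters with a \emph{negative} sign, so $\Phi_b$ is antitone: supersolutions of antitone maps need not dominate the fixed point, monotone iteration goes the wrong way, and for operators $I+K$ with $K\ge 0$ (here $K\delta=\ee^x\{\int_0^{\tau_\lo}e^{-\alpha u}b(u)\delta\,du\}$) the implication $(I+K)\delta\ge 0\Rightarrow\delta\ge 0$ is false in general, positivity of $K$ notwithstanding. Nor can ``monotonicity of the transformation in $g$'' be used directly: to identify the expression inside the supremum as the solution of \eqref{eq:5} with a $g$ comparable to $f+\beta(G-w^\beta)^+$, you would need precisely the inequality you are trying to prove; moreover, for a general (non-feedback) $b\in M_\beta$ the integrand $f+b(u)G-b(u)w^\beta$ is path-dependent, so Lemma \ref{lem:transf} as stated does not even apply to ``the equation corresponding to control $b$.''

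The repair is the paper's ordering: transform exactly first, compare pointwise second. Since $g:=f+\beta(G-w^\beta)^+$ \emph{is} a genuine bounded function of $(s,x)$, Lemma \ref{lem:transf} applies to the defining equation \eqref{eqn:def_w_beta} and yields, for every $b\in M_\beta$, the identity
$$
w^\beta(s,x)=\ee^x\Big\{\int_0^{\tau_\lo}e^{-\alpha u-\int_0^u b(t)dt}\big[f+\beta(G-w^\beta)^++b(u)\,w^\beta\big]\big(s+u,X(u)\big)\,du
+e^{-\alpha\tau_\lo-\int_0^{\tau_\lo}b(t)dt}H\big(s+\tau_\lo,X(\tau_\lo)\big)\Big\}.
$$
Now use the pointwise bound in the equivalent form $\beta(G-w^\beta)^++b(u)w^\beta\ge b(u)G$: the unknown disappears from the minorant, the right-hand side becomes the explicit expectation inside the supremum in \eqref{eqn:wbeta_form_mbeta}, and the inequality between the two expectations follows from positivity of the kernel alone --- no comparison principle and no Gronwall lemma. (Equivalently, your route can be salvaged by observing that the slack $w^\beta-\Phi_b(w^\beta)$ is the $e^{-\alpha u}$-potential of the \emph{nonnegative} integrand $\beta(G-w^\beta)^+-b(u)(G-w^\beta)$, and then transforming that identity; but this bookkeeping is exactly the computation just displayed, so nothing is gained.)
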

\begin{proof}
By Lemma \ref{lem:transf} the function $w^\beta$ has the following equivalent formulation:
\begin{align*}
w^\beta(s,x) &= \ee^x \Big\{\int_0^{\tau_\lo} e^{-\alpha u - \int_0^u
b(t)dt}\Big[ f\big(s+u,X(u)\big)\\
&\mop{25} + \beta \Big( G\big(s+u,X(u)\big) - w^\beta\big(s+u,X(u)\big) \Big)^++ b(u)w^\beta(s+u,x(u))\Big]du\\
&\mop{25}+ e^{-\alpha \tau_\lo - \int_0^{\tau_\lo} b(t)dt}h(s+\tau_\lo,x(\tau_\lo))\Big\}
\end{align*}
for any progressively measurable process $b(t)$ with values in $[0, \beta]$. Since $b(t) \le \beta$ we have
$$
\beta \Big( G\big(s+u,X(u)\big) - w^\beta\big(s+u,X(u)\big) \Big)^++ b(u)w^\beta(s+u,X(u))
\ge b(u) G\big(s+u, X(u) \big),
$$
which implies
\begin{align*}
&w^\beta(s,x) \ge \ee^x \Big\{\int_0^{\tau_\lo} e^{-\alpha u - \int_0^u
b(t)dt}\Big[ f\big(s+u,X(u)\big)+ b(u) G\big(s+u, X(u) \big) \Big] du \\
&\mop{58}+ e^{-\alpha \tau_\lo - \int_0^{\tau_\lo} b(t)dt}h(s+\tau_\lo,X(\tau_\lo))\Big\}.
\end{align*}
This is an equality for $b(t)$ given by
$$
b(t) = \begin{cases}
        \beta, & G\big(s+t, X(t)\big) \ge w^\beta \big(s+t, X(t)\big),\\
	0, & \text{otherwise}.
       \end{cases}
$$
Hence, formula \eqref{eqn:wbeta_form_mbeta} is proved.
\end{proof}

\begin{prop}\label{prop:pointwise}
Under (A1), the functions $w^\beta(s,x)$ increase pointwise to $w(s,x)$ as $\beta \to \infty$.
\end{prop}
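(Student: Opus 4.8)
The plan is to show that the family $\{w^\beta\}$ is monotone increasing and uniformly bounded, so that the pointwise limit $\bar w:=\lim_{\beta\to\infty}w^\beta$ exists, and then to identify $\bar w$ with the value function $w(s,x)=\sup_\tau J(s,x,\tau)$ by proving $\bar w\le w$ and $\bar w\ge w$ separately.

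Monotonicity follows at once from the control representation of Lemma \ref{lem:wbeta_form_mbeta}: since $M_\beta\subseteq M_{\beta'}$ whenever $\beta\le\beta'$, the supremum in \eqref{eqn:wbeta_form_mbeta} runs over a larger class for larger $\beta$, hence $w^\beta\le w^{\beta'}$; boundedness of $f,G,H$ together with $\alpha>0$ makes the whole family uniformly bounded, so $w^\beta\uparrow\bar w$. The upper bound $\bar w\le w$ is immediate from Lemma \ref{lem:wbeta_form_tau}: the subtracted penalty $\ee^x\{\ind{\tau<\tau_\lo}e^{-\alpha\tau}(G-w^\beta)^+(s+\tau,X(\tau))\}$ is nonnegative, so $w^\beta(s,x)\le\sup_\tau J(s,x,\tau)=w(s,x)$ for every $\beta$, and the bound passes to the limit.

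The substantive step is $\bar w\ge w$, and it rests on first proving the pointwise inequality $\bar w\ge G$ on $[0,\infty)\times\lo$. I would obtain this directly from \eqref{eqn:wbeta_form_mbeta} by inserting the constant control $b\equiv\beta\in M_\beta$, which gives
$$w^\beta(s,x)\ge\ee^x\Big\{\int_0^{\tau_\lo}\beta e^{-(\alpha+\beta)u}G(s+u,X(u))\,du+\int_0^{\tau_\lo}e^{-(\alpha+\beta)u}f(s+u,X(u))\,du+e^{-(\alpha+\beta)\tau_\lo}H(s+\tau_\lo,X(\tau_\lo))\Big\}.$$
For $x\in\lo$ one has $\tau_\lo>0$ almost surely, so as $\beta\to\infty$ the probability measures $\beta e^{-(\alpha+\beta)u}\,du$ concentrate at $u=0$; right-continuity of $u\mapsto G(s+u,X(u))$ and boundedness drive the first expectation to $G(s,x)$, while the other two terms vanish. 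Hence $\bar w(s,x)\ge G(s,x)$ for $x\in\lo$. I expect this to be the main obstacle: the tempting alternative of deducing the same bound from the occupation-time identity $\ee^x\{\int_0^{\tau_\lo}e^{-\alpha u}(G-\bar w)^+\,du\}=0$ fails, because $\bar w$ is only \emph{lower} semicontinuous as an increasing limit of continuous functions, whereas that route would require upper semicontinuity; the constant-control representation is the device that circumvents this.

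Finally I would pass to the limit in the supermartingale-type inequality \eqref{eqn:wbeta_stop1}. Fixing an arbitrary stopping time $\tau$ and taking $\sigma=\tau$, bounded (monotone) convergence together with $w^\beta\uparrow\bar w$ yields
$$\bar w(s,x)\ge\ee^x\Big\{\int_0^{\tau\wedge\tau_\lo}e^{-\alpha u}f(s+u,X(u))\,du+e^{-\alpha(\tau\wedge\tau_\lo)}\bar w\big(s+\tau\wedge\tau_\lo,X(\tau\wedge\tau_\lo)\big)\Big\}.$$
On $\{\tau\ge\tau_\lo\}$ the terminal value equals $H(s+\tau_\lo,X(\tau_\lo))$, since $w^\beta\equiv H$ on $\lo^c$ for every $\beta$ and hence so is $\bar w$; on $\{\tau<\tau_\lo\}$ we have $X(\tau)\in\lo$, so by the previous step the terminal value is at least $G(s+\tau,X(\tau))$. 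Substituting these two bounds turns the right-hand side into exactly $J(s,x,\tau)$, whence $\bar w(s,x)\ge J(s,x,\tau)$; taking the supremum over $\tau$ gives $\bar w\ge w$, and combined with $\bar w\le w$ this yields $\bar w=w$, completing the proof.
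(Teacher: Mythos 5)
Your proof is correct and follows essentially the same route as the paper: monotonicity and the upper bound $w^\beta \le w$ come from Lemmas \ref{lem:wbeta_form_mbeta} and \ref{lem:wbeta_form_tau}, the key inequality $\bar w \ge G$ on $[0,\infty)\times\lo$ comes from inserting into \eqref{eqn:wbeta_form_mbeta} a control that concentrates the measure $b(u)e^{-\int_0^u b(t)dt}du$ at $u=0$, and then $\bar w \ge J(s,x,\tau)$ follows for every $\tau$. The only cosmetic differences are that the paper uses the truncated control $b^\eta(u)=\ind{u\le\eta}\beta$ where you use the constant control $b\equiv\beta$ (both work, yours being marginally cleaner), and the paper finishes by letting $\beta\to\infty$ directly in \eqref{eqn:wbeta_form_tau}, using $(G-w^\beta)^+\downarrow 0$ on $\lo$, where you pass to the limit in \eqref{eqn:wbeta_stop1} and then substitute $\bar w \ge G$ on $\lo$ and $\bar w = H$ on $\lo^c$ --- two equivalent ways of exploiting the same facts.
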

\begin{proof}
Equation \eqref{eqn:wbeta_form_mbeta} implies that the functions $w^\beta(s,x)$ are increasing in $\beta$. Hence the limit $w^\infty(s,x) = \lim_{\beta \to \infty} w^\beta(s,x)$ exists. By \eqref{eqn:wbeta_form_tau} we  have $w^\beta \le w$ and, therefore, $w^\infty\leq w$. To prove that $w^\infty = w$ we first show that $w^\infty\geq G$. Let $x\in \lo$ and, for $\eta > 0$, put $b^\eta(u)=\ind{u \le \eta} \beta$. Then by \eqref{eqn:wbeta_form_mbeta} we have
\begin{align*}
w^{\beta}(s,x)
&\ge \ee^x\Big\{\int_0^{\tau_\lo}e^{-\alpha u - \int_0^{u}b^\eta(t)dt} f\big(s+u,X(u)\big) du\\
&\mop{30}+ \int_0^{\tau_\lo \wedge \eta}e^{-(\alpha+\beta)u}\beta G\big(s+u,X(u)\big)du+e^{-\alpha \tau_\lo - \int_0^{\tau_\lo} b^\eta(t)dt} H\big(\tau_\lo,X(\tau_\lo)\big)\Big\} \\
&= \ee^x \big\{ (I) + (II) + (III) \big\}.
\end{align*}
Letting $\beta \to \infty$ we can make $(I)$ and $(III)$ arbitrarily small and for sufficiently small $\eta$ and large $\beta$ the term $(II)$ is arbitrarily close to $G(s,x)$. Dominated Convergence Theorem implies $w^\infty(s,x) \ge G(s,x)$.

From (\ref{eqn:wbeta_form_tau}) for any stopping time $\tau$ we have
$$
w^\beta (s, x) \ge J(s, x, \tau) - \ee^x \big\{\ind{\tau < \tau_\lo} e^{-\alpha \tau}
\big( G - w^\beta \big)^+\big(s+\tau,X(\tau)\big) \big\}.
$$
By letting $\beta \to \infty$ we obtain
\begin{equation}
w^\infty(s,x)\geq J(s, x, \tau),
\end{equation}
because $\lim_{\beta \to \infty} (G - w^\beta)^+ (s,x) = 0$ for $x \in \lo$. Since $\tau$ is arbitrary we conclude that $w^\infty(s,x)=w(s,x)$ for $x\in \lo$. For $x\in E\setminus \lo$ we have $w^\beta(s,x)=H(s,x)=w(s,x)$.
\end{proof}

\begin{cor}\label{cor:lower_cont_w}
Under (A1), the value function $w$ is lowersemicontinuous. Moreover, if $w$ is continuous then $w^\beta$ approaches $w$ uniformly on compact sets.
\end{cor}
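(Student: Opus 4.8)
The plan is to read off both claims from the two results already established: each $w^\beta$ is continuous (Corollary~\ref{cor:cont_wbeta}) and $w^\beta\uparrow w$ pointwise as $\beta\to\infty$ (Proposition~\ref{prop:pointwise}). In particular $w=\sup_{\beta>0}w^\beta$ is an increasing pointwise limit of continuous functions, and the whole statement is a soft consequence of this structure.

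For the lower semicontinuity I would argue directly via superlevel sets. Fix $c\in\er$; since the family is increasing and $w$ is its pointwise supremum, one has $\{(s,x): w(s,x)>c\}=\bigcup_{\beta>0}\{(s,x): w^\beta(s,x)>c\}$. Each set on the right is open by continuity of $w^\beta$, so the union is open. As this holds for every $c$, the value function $w$ is lower semicontinuous. (This is just the general fact that a pointwise supremum of continuous, a fortiori lower semicontinuous, functions is lower semicontinuous.)

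For the second assertion I would invoke Dini's theorem. Assume $w$ is continuous and let $K\subset[0,\infty)\times E$ be compact. Choose any sequence $\beta_n\uparrow\infty$; then $(w^{\beta_n})$ is a monotone increasing sequence of continuous functions converging pointwise on the compact set $K$ to the continuous limit $w$, so Dini's theorem yields $\sup_K(w-w^{\beta_n})\to 0$. To pass from the sequence to the full continuous family, I use monotonicity: given $\ve>0$, fix $N$ with $\sup_K(w-w^{\beta_N})<\ve$; then for every $\beta\ge\beta_N$ we have $w^\beta\ge w^{\beta_N}$, hence $w-w^\beta\le w-w^{\beta_N}<\ve$ on $K$, so $\sup_K(w-w^\beta)<\ve$. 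Thus $w^\beta\to w$ uniformly on $K$, and since $K$ is an arbitrary compact set the claim follows.

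The only delicate point — and the closest thing to an obstacle — is precisely this passage from the continuous parameter $\beta$ to the sequential hypothesis under which Dini's theorem is usually stated. It is resolved entirely by the monotonicity of $(w^\beta)_{\beta>0}$ noted above, which shows that the choice of the approximating sequence is immaterial and that the tail of the continuous family is squeezed from below by $w^{\beta_N}$. Everything else is a direct application of standard facts about suprema of continuous functions and of Dini's theorem on compact sets.
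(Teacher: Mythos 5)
Your proof is correct and follows exactly the paper's route: the paper likewise deduces lower semicontinuity from the continuity of $w^\beta$ (Corollary~\ref{cor:cont_wbeta}) together with the monotone pointwise convergence $w^\beta \uparrow w$ (Proposition~\ref{prop:pointwise}), and then invokes Dini's theorem for the uniform convergence on compact sets when $w$ is continuous. The only difference is that you spell out the standard details (the superlevel-set argument and the passage from a sequence $\beta_n\uparrow\infty$ to the continuous parameter via monotonicity) that the paper leaves implicit.
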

\begin{proof}
The semicontinuity of $w$ follows from Corollary \ref{cor:cont_wbeta} and Proposition \ref{prop:pointwise}. Dini's theorem implies uniform convergence on compact sets if $w$ is continuous.
\end{proof}

\section{Properties of the value function $w$}\label{sec:boundary}

In this section we explore the properties of the value function, in particular, its behaviour on the boundary of $\lo$.

\begin{thm}\label{thm:limit}
Under (A1),  for $x \in \partial\lo$ we have
\begin{equation}\label{eq:limit}
\lim_{y\to x, y \in \lo} w(s,y)=G \vee H(s,x).
\end{equation}
\end{thm}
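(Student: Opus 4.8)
The statement splits into the two one-sided bounds $\liminf_{y\to x,\,y\in\lo} w(s,y)\ge G\vee H(s,x)$ and $\limsup_{y\to x,\,y\in\lo} w(s,y)\le G\vee H(s,x)$. Throughout I would use the elementary but crucial observation that, since $\lo$ is open, $x\in\partial\lo\subset\lo^c$, so starting from $x$ we have $X(0)=x\notin\lo$ and hence $\tau_\lo=0$ $\prob^x$-a.s. Consequently $w(s,x)=w^\beta(s,x)=H(s,x)$ (recall $w^\beta\equiv H$ on $\lo^c$, from the proof of Lemma \ref{lem:solv_w_beta}).

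The lower bound is the easy half. On one hand $w$ is lowersemicontinuous (Corollary \ref{cor:lower_cont_w}), so $\liminf_{y\to x} w(s,y)\ge w(s,x)=H(s,x)$. On the other hand the argument in Proposition \ref{prop:pointwise} gives $w\ge G$ on $\lo$; as $G$ is continuous, $\liminf_{y\to x,\,y\in\lo} w(s,y)\ge\liminf_{y\to x} G(s,y)=G(s,x)$. Taking the maximum of the two bounds yields $\liminf_{y\to x,\,y\in\lo} w(s,y)\ge G\vee H(s,x)$.

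For the upper bound I would exploit the representation \eqref{eqn:wbeta_form_tau}. Fix any $\beta>0$. Rearranging \eqref{eqn:wbeta_form_tau} gives, for every $\tau$, the inequality $J(s,y,\tau)\le w^\beta(s,y)+\ee^y\{\ind{\tau<\tau_\lo}e^{-\alpha\tau}(G-w^\beta)^+(s+\tau,X(\tau))\}$; taking the supremum over $\tau$ and using $w=\sup_\tau J$ yields
$$
w(s,y)\le w^\beta(s,y)+v^\beta(s,y),\qquad v^\beta(s,y):=\sup_\tau\ee^y\big\{\ind{\tau<\tau_\lo}e^{-\alpha\tau}\psi(s+\tau,X(\tau))\big\},
$$
where $\psi:=(G-w^\beta)^+$ is continuous and bounded by Corollary \ref{cor:cont_wbeta}. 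Since $w^\beta$ is continuous with $w^\beta(s,x)=H(s,x)$, the first term tends to $H(s,x)$ as $y\to x$. Because $\psi(s,x)=(G-H)^+(s,x)$ and $H+(G-H)^+=G\vee H$, the whole upper bound reduces to showing $\limsup_{y\to x} v^\beta(s,y)\le\psi(s,x)$.

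To bound $v^\beta$ I would first note the pathwise estimate $\ind{\tau<\tau_\lo}e^{-\alpha\tau}\psi(s+\tau,X(\tau))\le\sup_{0\le u\le\tau_\lo}\psi(s+u,X(u))$, valid for every $\tau$, so that $v^\beta(s,y)\le\ee^y\{\sup_{0\le u\le\tau_\lo}\psi(s+u,X(u))\}$. Fix a small $t>0$ and split on $\{\tau_\lo>t\}$ and $\{\tau_\lo\le t\}$. On $\{\tau_\lo>t\}$ the integrand is at most $\|\psi\|$, and by (A1) the map $y\mapsto\prob^y\{\tau_\lo>t\}=\ee^y\{\ind{t<\tau_\lo}\}$ is continuous and vanishes at $x$, so this part tends to $0$. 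On $\{\tau_\lo\le t\}$ I would use continuity of $\psi$ at $(s,x)$: given $\delta>0$ choose $\delta'>0$ with $\psi(s',y')\le\psi(s,x)+\delta$ whenever $|s'-s|\le t$ and $\rho(y',x)\le\delta'$; then on the event $\{\sup_{u\le t}\rho(X(u),x)\le\delta'\}$ one has $\sup_{0\le u\le\tau_\lo}\psi\le\psi(s,x)+\delta$, while off this event the contribution is bounded by $\|\psi\|\,\prob^y\{\sup_{u\le t}\rho(X(u),x)>\delta'\}$. The hard part will be controlling precisely this last probability: I need $\prob^y\{\sup_{u\le t}\rho(X(u),x)>\delta'\}$ to be small for $y$ near $x$ and $t$ small, uniformly over a neighbourhood of $x$. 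This is a small-time, uniform-in-starting-point modulus-of-continuity estimate, the path-level companion of the stochastic continuity inherent in the weak Feller property; I expect to obtain it from the right-continuity of trajectories of the standard Markov process together with the uniform compact-containment bound of Proposition \ref{prop:01}. Granting it, letting $y\to x$ first and then $t\to0$, $\delta\to0$ gives $\limsup_{y\to x} v^\beta(s,y)\le\psi(s,x)=(G-H)^+(s,x)$, which combined with $w^\beta(s,y)\to H(s,x)$ completes the upper bound.
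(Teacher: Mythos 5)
Your lower bound is correct, and is in fact a shortcut relative to the paper: where the paper proves a quantitative boundary estimate (Proposition \ref{prop1}, $w^\beta\ge H-\ve$ near $\partial\lo$) and then lets $\beta\to\infty$, you obtain $\liminf_{y\to x}w(s,y)\ge H(s,x)$ directly from the lower semicontinuity of $w$ (Corollary \ref{cor:lower_cont_w}) combined with $w(s,x)=H(s,x)$ on $\partial\lo$; both routes rest on the same ingredients (continuity of $w^\beta$ under (A1) and $w^\beta\uparrow w$). Your reduction of the upper bound, via \eqref{eqn:wbeta_form_tau} with a single fixed $\beta$, to the estimate $\limsup_{y\to x}v^\beta(s,y)\le (G-H)^+(s,x)$ is also correct, as is your treatment of the event $\{\tau_\lo>t\}$ through the continuity of $h_t=P^{\tau_\lo}_t\mathbf{1}$ and $h_t(x)=0$; this mirrors the paper's use of $h_\eta$ in Proposition \ref{prop:boundary_downjump}.

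The genuine gap is exactly the step you flag as the hard part: the claim that $\prob^y\{\sup_{u\le t}\rho(X(u),x)>\delta'\}$ is small for $y$ near $x$ once $t$ is small. This does not follow from the tools you cite. Right-continuity of paths gives $\prob^x\{\sup_{u\le t}\rho(X(u),x)>\delta'\}\to 0$ as $t\to 0$ only for the \emph{fixed} starting point $x$; since $y\mapsto\prob^y\{\sup_{u\le t}\rho(X(u),x)>\delta'\}$ need not be continuous, this says nothing about the $\limsup$ as $y\to x$. Proposition \ref{prop:01} is a compact-containment bound (radius $R\to\infty$ at fixed horizon), not a small-ball bound, so it cannot control exit from a $\delta'$-ball in small time. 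What you need is a running-supremum (maximal) version of Proposition \ref{prop:fellercont}; this is essentially assumption \ref{ass:a2'}, which the paper introduces as a separate hypothesis (Remark \ref{rem:a2'}) rather than deriving from (A1). Such an estimate can indeed be proved for standard weak Feller processes by a Kinney/Ottaviani-type reflection argument — apply the strong Markov property at the debut of the complement of $B(x,2\delta')$, use Proposition \ref{prop:01} to keep the position at that time in a compact set $L$, and use Proposition \ref{prop:fellercont} on $L$ to bound from below the probability of remaining $\delta'$-close up to the terminal time — but this is a nontrivial missing argument, not a consequence of the references you give. The paper's own proof (Proposition \ref{prop:boundary_downjump} and Corollary \ref{cor:limbound}) is engineered precisely to avoid this: it conditions at the stopping time $\tau\wedge\tau_\lo\wedge\eta$ and compares the payoff there with the payoff at the \emph{deterministic} time $\eta$ via the strong Markov property, so that only the fixed-time estimate \eqref{eqn:short_time_cont} is ever required.
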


The proof of this theorem consists of several steps which are of interest on their own. They are formulated and proved as separate results below.

It is clear that $w \ge G$ on $\lo$ and $w = H$ on the complement of $\lo$. It is therefore natural to expect a discontinuity at the boundary of $\lo$ if $G > H$. The following proposition shows that this discontinuity is constrained to the minimum: the absolute value of the difference between $G$ and $H$.

\begin{prop}\label{prop:boundary_downjump}
Assume (A1) and $G \ge H$. For any $x \in \partial \lo$ we have
\begin{equation}\label{eqn:prop:bd_dj}
\lim_{y \to x, \ y \in \lo} w(s, y) = G(s,x)
\end{equation}
and the convergence is uniform in $s$ and $x$ from compact sets.
\end{prop}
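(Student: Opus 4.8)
The plan is to prove the two one-sided estimates separately, the lower bound being immediate and the upper bound carrying essentially all of the work. Since $w \ge G$ on $\lo$ (as already noted above) and $G$ is continuous, we get $\liminf_{y\to x,\,y\in\lo} w(s,y)\ge \lim_{y\to x} G(s,y)=G(s,x)$, uniformly for $(s,x)$ in compact sets. It therefore remains to show $\limsup_{y\to x,\,y\in\lo} w(s,y)\le G(s,x)$ with the same uniformity. For this I would first exploit the hypothesis $G\ge H$ to dominate the terminal term: for an arbitrary stopping time $\tau$, writing $\sigma=\tau\wedge\tau_\lo$, the pointwise inequality $\ind{\tau<\tau_\lo}e^{-\alpha\tau}G(s+\tau,X(\tau))+\ind{\tau\ge\tau_\lo}e^{-\alpha\tau_\lo}H(s+\tau_\lo,X(\tau_\lo))\le e^{-\alpha\sigma}G(s+\sigma,X(\sigma))$ gives
$$w(s,y)\le\sup_{\sigma\le\tau_\lo}\ee^y\Big\{\int_0^\sigma e^{-\alpha u}f(s+u,X(u))\,du+e^{-\alpha\sigma}G(s+\sigma,X(\sigma))\Big\},$$
the supremum being over stopping times $\sigma$ dominated by $\tau_\lo$.

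Next I would center this expression at $G(s,x)$, using $\sup_\sigma[\,\cdots-G(s,x)\,]=\sup_\sigma[\,\cdots\,]-G(s,x)$ and the splitting $e^{-\alpha\sigma}G(s+\sigma,X(\sigma))-G(s,x)=e^{-\alpha\sigma}\big(G(s+\sigma,X(\sigma))-G(s,x)\big)-(1-e^{-\alpha\sigma})G(s,x)$. The discount term is bounded by $(\alpha\tau_\lo\wedge 1)\|G\|$ and the running cost by $(\tau_\lo\wedge\alpha^{-1})\|f\|$; both are bounded and tend to $0$ as $\tau_\lo\to0$, so, since $\sigma\le\tau_\lo$, their expectations are controlled uniformly in $\sigma$ as soon as $\tau_\lo$ is known to be small with high probability. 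The essential remaining contribution is $\ee^y\big\{|G(s+\sigma,X(\sigma))-G(s,x)|\big\}$, which I would attack through the modulus of continuity of $G$ on a compact time–space set, provided the process is known to stay close to $x$ up to its exit.

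This last point is the crux. From Assumption \ref{ass:a1} applied to $h\equiv 1$, the map $y\mapsto\prob^y(\tau_\lo>t)=P^{\tau_\lo}_t h(y)$ is continuous and vanishes on $\partial\lo$ (there $X(0)=x\notin\lo$, hence $\tau_\lo=0$), so $\prob^y(\tau_\lo>t)\to0$ as $y\to x$, uniformly for $x$ in compact subsets of $\partial\lo$. The genuinely delicate step is the maximal estimate
$$\lim_{y\to x,\,y\in\lo}\prob^y\Big(\sup_{0\le u\le\tau_\lo}\rho(X(u),x)>\ve\Big)=0,$$
which I would obtain by splitting on $\{\tau_\lo>t\}$ and $\{\tau_\lo\le t\}$: the first has vanishing probability by the preceding remark, while on the second $\sup_{u\le\tau_\lo}\rho(X(u),\cdot)\le\sup_{u\le t}\rho(X(u),\cdot)$, which is small with high probability, uniformly for the starting point in a compact neighbourhood of $x$, by the uniform stochastic continuity of the standard Feller process over short time intervals (a property of standard processes of the same nature as Proposition \ref{prop:01}). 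Choosing $t$ small first and then letting $y\to x$ yields the estimate, and combining it with $\rho(y,x)\to0$ shows that $X(\sigma)$ lies within $2\ve$ of $x$ for every $\sigma\le\tau_\lo$ on an event of probability close to $1$. I expect this maximal/stochastic-continuity estimate to be the main obstacle, everything else being routine bookkeeping.

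Finally I would assemble the pieces. On the good event $\{\sup_{u\le\tau_\lo}\rho(X(u),x)\le\ve,\ \tau_\lo\le t\}$ one has $\rho(X(\sigma),x)\le\ve$ and $\sigma\le t$ for every $\sigma\le\tau_\lo$, so $|G(s+\sigma,X(\sigma))-G(s,x)|$ is bounded by the modulus of continuity of $G$ on the compact set $[s,s+t]\times\bar B(x,\ve)$ (closed balls are compact by the standing assumption on $\rho$), whereas on the complementary event the integrand is at most $2\|G\|$ times a vanishing probability. Collecting the three contributions shows that the supremum above does not exceed $G(s,x)$ plus a term tending to $0$ as $y\to x$, uniformly for $(s,x)$ in compact sets, since every modulus and every probability bound used is uniform there. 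Together with the lower bound this establishes \eqref{eqn:prop:bd_dj} with the asserted uniformity.
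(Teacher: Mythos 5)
Your skeleton coincides with the paper's in most respects: the lower bound via $w \ge G$ on $\lo$ and continuity of $G$; the reduction of the upper bound to a pure-$G$ stopped problem using $G \ge H$ (this is exactly the paper's first inequality); the use of (A1) through $h_t(y)=\prob^y\{\tau_\lo>t\}$, which is continuous by (A1) and vanishes off $\lo$, hence is small near $\partial\lo$; and the final modulus-of-continuity bookkeeping. The one place where you genuinely diverge is the path-control step, and that is where there is a gap. You invoke the maximal estimate $\prob^y\bigl\{\sup_{u\le t}\rho(X(u),y)>\ve\bigr\}\to 0$ as $t\to 0$, uniformly in $y$ from compact sets, calling it ``a property of standard processes of the same nature as Proposition \ref{prop:01}''. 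It is not: Proposition \ref{prop:01} is a compact-containment statement (large radius, fixed horizon), and the only short-time continuity statement available in the paper, Proposition \ref{prop:fellercont}, is a fixed-time estimate on $\prob^x\{X(h)\notin B(x,\delta)\}$, not a supremum over the path. The maximal version you need is essentially the paper's assumption (A2') (see Remark \ref{rem:a2'}), which the authors introduce as an \emph{additional} hypothesis in later sections and verify only for solutions of SDEs via Doob's inequality --- so it cannot simply be quoted as a known property at this point.

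The claim you need is in fact true for standard weak Feller processes, but it requires an argument, and that argument is precisely the trick the paper's proof uses to avoid any maximal inequality. In the paper, the term $\ee^y\bigl\{|G(s+\tau\wedge\tau_\lo\wedge\eta,\,X(\tau\wedge\tau_\lo\wedge\eta)) - G(s,y)|\bigr\}$ is handled by inserting the intermediate quantity $G(s+\eta,X(\eta))$, conditioning at the stopping time $\tau\wedge\tau_\lo\wedge\eta$, and applying the strong Markov property: the displacement from $X(\tau\wedge\tau_\lo\wedge\eta)$ to $X(\eta)$ occurs over a time at most $\eta$ and from a starting point that, with high probability (Proposition \ref{prop:01}), lies in a compact set $L$, so the fixed-time estimate \eqref{eqn:short_time_cont} from Proposition \ref{prop:fellercont} applies uniformly. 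Alternatively, your maximal estimate can be derived from the same ingredients by a reflection argument: with $\sigma=\inf\{u:\rho(X(u),y)>\ve\}$, on the event $\{\sigma\le t\}$ intersected with the event that the path stays in $L$, the strong Markov property at $\sigma$ together with Proposition \ref{prop:fellercont} gives $\prob^y\{\rho(X(t),y)\ge\ve/2\}\ge\tfrac12\,\prob^y\{\sigma\le t,\ X(u)\in L\ \forall u\le t\}$, and the left-hand side is small for small $t$ uniformly on compacts. Either way, the missing step must be supplied; as written, the step you yourself identify as the crux is asserted rather than proved.
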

\begin{proof}
Since $w(s,x) \ge G(s,x)$ for $x \in \lo$ and $G$ is continuous we obtain that
$\liminf_{y \to x, \ y \in \lo} w(s, y)\linebreak \ge G(s,x)$. In the remaining part of the proof we show that $\limsup_{y \to x, \ y \in \lo} w(s, y) \le G(s,x)$, which implies that the limit in \eqref{eqn:prop:bd_dj} exists and equals $G(s,x)$.

Fix a compact set $K \subseteq E$, $T > 0$ and $\ve > 0$. First we make preparatory steps. By Proposition \ref{prop:01} in the Appendix, there is a compact set $L \subseteq E$ such that
\begin{equation}\label{eqn:prop_bound_1}
\sup_{x \in K} \prob^x \big( \exists\ t\in [0, T+1]\ X(t) \notin L \big) \le \ve.
\end{equation}
The extension of the time interval by one unit to $[0, T+1]$ is required to allow the initial time $s$ to be in $[0, T]$ and leave time for the process $(X(t))$ to evolve. Notice that below $\delta$ and $\eta$ are both bounded by $1$.

Let $\delta \in (0, 1)$ be such that for $(s, x) \in [0, T] \times B(L, \delta)$, $y \in L$, $\|x - y\| \le \delta$ and $t \in [0, \delta]$
\begin{equation}\label{eqn:prop_bound_2}
| G(s, x) - G(s + t, y)| \le \ve,\\
\end{equation}
Proposition \ref{prop:fellercont} implies that there is $\eta > 0$, which, for convenience, is bounded by $\delta \wedge \ve$, such that
\begin{equation}\label{eqn:short_time_cont}
\sup_{x \in L}\ \sup_{t \le \eta} \prob^x \big( X(t) \notin B(x, \delta) \big) \le \ve.
\end{equation}

Fix $x \in \partial \lo \cap K$ and $s \in [0, T]$. For any $y \in \lo \cap K$ we have
\begin{align*}
w(s,y) &= \sup_{\tau} J(s,y, \tau) \\
&\le
\sup_\tau \ee^y \Big\{ \int_0^{\tau \wedge \tau_\lo} e^{-\alpha u} f(s+u, X(u)) su +  e^{-\alpha (\tau \wedge \tau_\lo)}G(s + \tau \wedge \tau_\lo, X(\tau \wedge \tau_\lo)) \Big\}\\
&\le
\prob^y( \tau_\lo > \eta) \big(\frac{\|f\|}{\alpha} + \|G\|\big) + \prob^y (\tau_\lo \le \eta) \big( \eta \|f\| + G(s,y) \big)\\
&\mop{11}+
\sup_\tau \ee^y \big\{ \ind{\tau_\lo \le \eta}\big| e^{-\alpha (\tau \wedge \tau_\lo)} G(s + \tau \wedge \tau_\lo, X(\tau \wedge \tau_\lo)) - G(s,y) \big| \big\}.
\end{align*}
Consider the last term. For any stopping time $\tau$ we have
\begin{align*}
&\ee^y \big\{ \ind{\tau_\lo \le \eta}\big| e^{-\alpha (\tau \wedge \tau_\lo)} G(s + \tau \wedge \tau_\lo, X(\tau \wedge \tau_\lo)) - G(s,x) \big| \big\}\\
& \le  \|G\|(1-e^{-\alpha \eta}) +
\ee^y \big\{\big| G(s + \tau \wedge \tau_\lo \wedge \eta, X(\tau \wedge \tau_\lo \wedge \eta)) - G(s,x) \big| \big\}\\
&\le  \alpha \eta \|G\| +
\ee^y \big\{\big| G(s + \tau \wedge \tau_\lo \wedge \eta, X(\tau \wedge \tau_\lo \wedge \eta)) - G(s+\eta,X(\eta)) \big| \big\}\\
&\mop{13} +\ee^y \big\{\big| G(s + \eta, X(\eta)) - G(s,y) \big| \big\}\\
&= (I) + (II) + (III).
\end{align*}
The first term is bounded by $\alpha \ve \|G\|$. The estimate of the second term requires conditioning on $X(\tau \wedge \tau_\lo \wedge \eta)$, the use of the strong Markov property and inequalities (\ref{eqn:prop_bound_1}), (\ref{eqn:short_time_cont}):
\begin{align*}
&\ee^y \big\{\big| G(s + \tau \wedge \tau_\lo \wedge \eta, X(\tau \wedge \tau_\lo \wedge \eta)) - G(s+\eta,X(\eta)) \big| \big\}\\
&= \ee^y \Big\{ \ee^{X(\tau \wedge \tau_\lo \wedge \eta)} \Big\{\big| G(s + \tau \wedge \tau_\lo \wedge \eta, X(0))- G(s+\eta,X(\eta-\tau \wedge \tau_\lo \wedge \eta)) \big| \Big\} \Big\}\\
&\le
2 \|G\|\ \prob^y \{ \exists s \in [0, \eta]\  X(s) \notin L \}\\
&\mop{13}+ 2 \|G\|\  \prob^y \big\{\forall s \in [0, \eta]\ X(s) \in L \quad \text{and} \quad X(\eta) \notin B\big(X(\tau \wedge \tau_\lo \wedge \eta), \delta\big)\big\} + \ve\\
&\le 2\|G\| \ve + 2 \|G\| \ve + \ve = \ve (1 + 4 \|G\|).
\end{align*}
Term $(III)$ is estimated similarly knowing that $y$ is in $L$ by assumption: $(III) \le \ve (1 + 2 \|G\|)$. Combining these estimates  we obtain
\begin{align*}
w(s,y) &\le h_\eta(y) \big( \frac{\|f\|}{\alpha} + \|G\| \big) + (1 - h_\eta(y)) \big( \eta \|f\| + G(s,y)\big) + \ve \big(2 + (6+\alpha) \|G\|\big)\\
&\le G(s,y) + h_\eta(y)  \big( \frac{\|f\|}{\alpha} + 2\|G\| \big) + \eta \|f\| + \ve \big(2 + (6+\alpha) \|G\|\big),
\end{align*}
where $h_{\eta}(y) = \prob^y \{ \tau_\lo > \eta)$. Assumption (A1) implies that $h_\eta$ is continuous on $E$. Clearly, $h_\eta(x) = 0$. Hence,
$$
\limsup_{y \to x,\ y \in \lo} w(s,y) \le G(s,x) 
+ \eta \|f\| + \ve \big(2 + (6+\alpha) \|G\|\big)
$$
and the limit in the right-hand side is uniform in $(s,x) \in [0, T] \times (\partial \lo \cap K)$. Since $\ve > 0$ is arbitrary and $\eta < \ve$ this implies \eqref{eqn:prop:bd_dj}.
\end{proof}

\begin{cor}\label{cor:limbound}
Under (A1), for any $x\in \lo$ we have
\begin{equation}\label{eqn:cor1}
\limsup_{y\to x, y\in \lo} w(s,y)\leq G\vee H(s,x)
\end{equation}
\end{cor}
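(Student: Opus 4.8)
The plan is to bound $w$ from above by the value function of an auxiliary problem of the form \eqref{eqn:simple_vf} in which both the continuation payoff $G$ and the exit payoff $H$ are replaced by their pointwise maximum $\Phi := G \vee H$, and then to read off the boundary behaviour of this auxiliary function from Proposition \ref{prop:boundary_downjump}. Concretely, I would first dominate the functional termwise. Fixing a stopping time $\tau$ and using $G \le \Phi$ and $H \le \Phi$, a split of $J(s,y,\tau)$ on $\{\tau < \tau_\lo\}$ and $\{\tau \ge \tau_\lo\}$ (on which $\tau \wedge \tau_\lo$ equals $\tau$ and $\tau_\lo$ respectively) gives
\[
J(s,y,\tau) \le \ee^y\Big\{\int_0^{\tau\wedge\tau_\lo} e^{-\alpha u} f\big(s+u,X(u)\big)\,du + e^{-\alpha(\tau\wedge\tau_\lo)}\Phi\big(s+\tau\wedge\tau_\lo, X(\tau\wedge\tau_\lo)\big)\Big\} =: \tilde J(s,y,\tau).
\]
Taking the supremum over $\tau$ then yields $w(s,y) \le \tilde w(s,y)$, where $\tilde w := \sup_\tau \tilde J$.

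The key observation is that $\tilde w$ is itself a value function of the type \eqref{eqn:simple_vf}: recombining the two indicators shows that $\tilde J$ is precisely the functional \eqref{eqn:simple_vf} with $G$ and $H$ both replaced by $\Phi$. Since $\Phi = G\vee H$ is continuous and bounded, and the ordering hypothesis of Proposition \ref{prop:boundary_downjump} now holds trivially (the two payoffs coincide, so $\Phi \ge \Phi$), that proposition applies to $\tilde w$ and gives, for $x \in \partial\lo$,
\[
\lim_{y\to x,\ y\in\lo} \tilde w(s,y) = \Phi(s,x) = G\vee H(s,x),
\]
uniformly for $(s,x)$ in compact sets. Combined with $w \le \tilde w$, this produces $\limsup_{y\to x,\ y\in\lo} w(s,y) \le G\vee H(s,x)$, which is the assertion at the boundary points $x \in \partial\lo$ needed for Theorem \ref{thm:limit}.

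Because the argument is a reduction, there is no analytic obstacle beyond the one already overcome in Proposition \ref{prop:boundary_downjump}; the content is entirely in recognizing the domination. The only delicate point is the bookkeeping in the first step: one must verify that the indicator split of $J$ recombines exactly into the single terminal term $e^{-\alpha(\tau\wedge\tau_\lo)}\Phi(s+\tau\wedge\tau_\lo, X(\tau\wedge\tau_\lo))$, so that $\tilde w$ genuinely falls under \eqref{eqn:simple_vf} with data $(f, \Phi, \Phi)$ and Proposition \ref{prop:boundary_downjump} may be invoked verbatim.
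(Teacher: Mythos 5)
Your proposal is correct and takes essentially the same route as the paper: the paper's proof likewise bounds $w(s,y)$ by $\sup_\tau \ee^y \big\{ \int_0^{\tau \wedge \tau_\lo} e^{-\alpha u} f(s+u, X(u))\, du + e^{-\alpha (\tau \wedge \tau_\lo)}\, (G\vee H)(s + \tau \wedge \tau_\lo, X(\tau \wedge \tau_\lo)) \big\}$ and then runs the argument of Proposition \ref{prop:boundary_downjump} with $G$ replaced by $G\vee H$. Your only (harmless and arguably cleaner) refinement is to invoke that proposition as a black box for the auxiliary problem \eqref{eqn:simple_vf} with data $(f, G\vee H, G\vee H)$, where the ordering hypothesis holds trivially, instead of re-running its proof with the substitution as the paper suggests.
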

\begin{proof}
Notice that
\begin{multline*}
w(s,y)\le
\sup_\tau \ee^y \Big\{ \int_0^{\tau \wedge \tau_\lo} e^{-\alpha u} f(s+u, X(u)) du
 + e^{-\alpha (\tau \wedge \tau_\lo)} G\vee H(s + \tau \wedge \tau_\lo, X(\tau \wedge \tau_\lo)) \Big\}
\end{multline*}
and then continue as in the proof of Proposition \ref{prop:boundary_downjump} replacing $G$ with $G\vee H$.
\end{proof}

The following proposition explores the impact of the value of the functional on the complement of $\lo$ on the value function close to the boundary of $\lo$.
\begin{prop}\label{prop1}
Assume (A1). For each $\ve>0$, $T>0$ and a compact set $K \subseteq E$ there is a compact set $K_\ve \subset \lo$ such that for $x \in K\setminus K_\ve$, $s\in [0,T]$ and $\beta>0$ we have
\begin{equation}\label{eqn:prop1}
w^\beta(s,x) \geq H(s,x)-\ve.
\end{equation}
\end{prop}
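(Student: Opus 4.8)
The plan is to bound $w^\beta$ from below by a single $\beta$-independent function and then exploit its continuity up to $\partial\lo$. Dropping the nonnegative penalty term in \eqref{eqn:def_w_beta} (since $\beta(G-w^\beta)^+\ge 0$), every penalized function dominates
\[
\psi(s,x) := \ee^x\Big\{\int_0^{\tau_\lo} e^{-\alpha u} f\big(s+u,X(u)\big)\,du + e^{-\alpha\tau_\lo} H\big(s+\tau_\lo,X(\tau_\lo)\big)\Big\},
\]
and the bound $w^\beta \ge \psi$ holds for \emph{all} $\beta>0$ and all $(s,x)$. It therefore suffices to establish the inequality with $\psi$ in place of $w^\beta$, since the estimate then becomes automatically uniform in $\beta$.

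Next I would record two structural facts about $\psi$. By Lemma \ref{lem:cont_stopped_potential} (applied with $h=f$, $\gamma=\alpha$) and Lemma \ref{lem:formerA2} (applied with $h=H$), the function $\psi$ is continuous on $[0,\infty)\times E$. Moreover, for $x\in\lo^c$ we have $\tau_\lo=0$, so the integral vanishes and $\psi(s,x)=H(s,x)$; in particular $\psi=H$ on $[0,\infty)\times\partial\lo$. Hence $g:=\psi-H$ is continuous and vanishes identically on $[0,\infty)\times\partial\lo$.

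The uniform lower bound near the boundary then follows from a compactness argument. I would claim that for each $\ve,T>0$ and compact $K$ there is $\delta>0$ with $g(s,x)\ge -\ve$ whenever $s\in[0,T]$ and $x\in K\cap\lo$ satisfies $\rho(x,\lo^c)<\delta$. If this failed, one could pick $(s_n,x_n)\in[0,T]\times(K\cap\lo)$ with $\rho(x_n,\lo^c)\to 0$ and $g(s_n,x_n)<-\ve$; extracting a convergent subsequence $(s_n,x_n)\to(s_*,x_*)$ from the compact set $[0,T]\times K$, continuity of $\rho(\cdot,\lo^c)$ forces $\rho(x_*,\lo^c)=0$, so $x_*\in\overline{\lo}\cap\lo^c=\partial\lo$, giving the contradiction $0=g(s_*,x_*)=\lim_n g(s_n,x_n)\le-\ve$.

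Finally I would set $K_\ve:=K\cap\{x:\rho(x,\lo^c)\ge\delta\}$, which is compact and, since $\rho(x,\lo^c)>0$ forces $x\in\lo$, contained in $\lo$. For $x\in K\setminus K_\ve$ we have $\rho(x,\lo^c)<\delta$: if $x\in\lo^c$ then $w^\beta(s,x)=H(s,x)$ outright, while if $x\in\lo$ then $w^\beta(s,x)\ge\psi(s,x)=H(s,x)+g(s,x)\ge H(s,x)-\ve$, which is \eqref{eqn:prop1}. I do not anticipate a serious obstacle; the only delicate points are the identity $\psi=H$ on $\lo^c$ and the uniformity of the boundary limit, both handled above. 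A heavier, more computational alternative mirroring the proof of Proposition \ref{prop:boundary_downjump} — splitting on $\{\tau_\lo\le\eta\}$ and invoking continuity of $x\mapsto\prob^x\{\tau_\lo>\eta\}$ together with the short-time estimate \eqref{eqn:short_time_cont} — would also succeed but is unnecessarily involved.
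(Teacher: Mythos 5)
Your proof is correct, but it follows a genuinely different route from the paper's. Both arguments begin the same way, by discarding the nonnegative penalty term in \eqref{eqn:def_w_beta} to obtain a lower bound that is uniform in $\beta$; after that they diverge. You bound $w^\beta$ below by the single function $\psi$, invoke Lemma \ref{lem:cont_stopped_potential} and Lemma \ref{lem:formerA2} to conclude that $\psi$ is continuous on all of $[0,\infty)\times E$, observe that $\psi=H$ on $[0,\infty)\times\lo^c$ (hence on the boundary, since $\lo$ is open), and finish with a soft compactness--contradiction argument; all of these steps are sound, including the identification $\overline{\lo}\cap\lo^c=\partial\lo$ and the compactness of $K_\ve=K\cap\{\rho(\cdot,\lo^c)\ge\delta\}$. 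The paper instead stays quantitative: it keeps only the elementary bound $\int_0^{\tau_\lo}e^{-\alpha u}f\,du \ge -\|f\|\eta-\frac{\|f\|}{\alpha}\ind{\tau_\lo>\eta}$, splits the boundary term on $\{\tau_\lo\le\eta\}$ versus $\{\tau_\lo>\eta\}$, and redoes the short-time estimates of Proposition \ref{prop:boundary_downjump} (via \eqref{eqn:short_time_cont} and Proposition \ref{prop:fellercont}) to arrive at
$w^\beta(s,x)\ge (1-h_\eta(x))H(s,x)-h_\eta(x)\bigl(\frac{\|f\|}{\alpha}+\|H\|\bigr)-\ve'(2+6\|H\|+\|f\|)$,
concluding from the continuity of $h_\eta(x)=\prob^x\{\tau_\lo>\eta\}$ under (A1) and the fact that $h_\eta$ vanishes off $\lo$. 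What each buys: your version is shorter and cleaner because it lets the already-proved Lemma \ref{lem:formerA2} (whose proof contains the real work, the $C_0$-approximation and localization) absorb all the delicate estimates, and it produces explicitly the $\beta$-independent minorant $\psi$; the paper's version avoids Lemma \ref{lem:formerA2} entirely, uses only the continuity of $h_\eta$ (the weakest consequence of (A1)), and keeps explicit error constants, remaining structurally parallel to Proposition \ref{prop:boundary_downjump} so that the two proofs share their preparatory steps. Your closing remark that the paper-style argument "would also succeed but is unnecessarily involved" is essentially accurate, with the caveat that it is the approach the authors actually chose.
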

\begin{proof}
Fix $\ve' > 0$ and choose $\eta > 0$ such that \eqref{eqn:prop_bound_1}-\eqref{eqn:short_time_cont} hold for the function $H$. By the definition of $w^\beta$ we have
$$
w^\beta(s,x) \ge -\|f\| \eta - \frac{\|f\|}{\alpha} \prob^x \{ \tau_\lo > \eta \} + \ee^{x}\left\{e^{-\alpha \tau_\lo}H(s+\tau_\lo,X(\tau_\lo))\right\}.
$$
Splitting the last term depending on whether $\tau_\lo$ is greater or smaller than $\eta$ and doing analogous estimates as in the proof of Proposition \ref{prop:boundary_downjump} we obtain the following lower bound
$$
w^\beta(s,x) \ge (1 - h_\eta(x)) H(s,x) - h_\eta(x) \big(\frac{\|f\|}{\alpha} + \|H\|\big) - \ve' (2 + 6 \|H\| + \|f\|).
$$
By arbitrariness of $\ve' > 0$ and continuity of $h_\eta$ we can choose $K_\ve$ such that \eqref{eqn:prop1} is satisfied.
\end{proof}
According to Proposition \ref{prop1}, Assumption (A1) guarantees the "migration" of $H$ into $\lo$, i.e., the function $H$ provides a lower bound for $w^\beta$ when $x$ approaches $\partial \lo$. As $w^\beta$ is the lower bound for $w$ (see Proposition \ref{prop:pointwise}) this property is shared by the value function $w$. In particular, when $H \ge G$ the value function smoothly rises to the upper level $H$ on the boundary of $\lo$.

\begin{proof}[Proof of Theorem \ref{thm:limit}]
From \eqref{eqn:prop1}, letting first $\beta \to \infty$ and then $\ve \to 0$ we obtain
$$
\liminf_{y\to x, y \in \lo} w(s,x)\geq  H(s,x).
$$
Since $G$ is continuous and $w(s,y)\geq G(s,y)$ on $\lo$ this extends to
$$
\liminf_{y\to x, y \in \lo} w(s,x)\geq G \vee H(s,x).
$$
Corollary \ref{cor:limbound} and the above inequality imply that the limit in \eqref{eq:limit} exists and equals $G \vee H$.
\end{proof}

\section{Continuity of $w$ and existence of optimal stopping times}\label{sec:cont_w}

Let $\lde$ denote the set of functions $\varphi(s, x)$ admitting the following decomposition:
\begin{equation}\label{eqn:decomp_D}
\varphi(s, x) = \ee^x \Big\{ \int_0^{\tau_\lo} e^{-\alpha u} \varphi_1(s+u, X(u)) du + e^{-\alpha \tau_\lo} \varphi_2(s + \tau_\lo, X(\tau_\lo)) \Big\}
\end{equation}
for $\varphi_1, \varphi_2 \in C_0([0, \infty) \times E)$.

\begin{lemma}\label{lem:density_D}
The set $\lde$ is a dense subset of $\mathcal C_0([0, \infty) \times E)$.
\end{lemma}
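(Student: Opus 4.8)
The plan is to reduce the statement to the density, already recorded in the proof of Lemma~\ref{lem:formerA2}, of the $\alpha$-resolvent range in $C_0([0,\infty)\times E)$. Recall that by the weak Feller property the functions of the form \eqref{eqn:decompos1}, i.e.\ $R_\alpha\tl h(s,x):=\ee^x\{\int_0^\infty e^{-\alpha u}\tl h(s+u,X(u))\,du\}$ with $\tl h\in C_0([0,\infty)\times E)$, form a dense subset of $C_0([0,\infty)\times E)$. The key observation I would make is that every such $R_\alpha\tl h$ already lies in $\lde$. Indeed, conditioning on $\mathcal F_{\tau_\lo}$ and using the strong Markov property at the exit time $\tau_\lo$ splits the infinite-horizon integral as
\begin{align*}
R_\alpha\tl h(s,x)
&=\ee^x\Big\{\int_0^{\tau_\lo}e^{-\alpha u}\tl h(s+u,X(u))\,du\\
&\mop{30}+ e^{-\alpha\tau_\lo}\,\ee^{X(\tau_\lo)}\Big[\int_0^\infty e^{-\alpha u}\tl h(s+\tau_\lo+u,X(u))\,du\Big]\Big\}\\
&=\ee^x\Big\{\int_0^{\tau_\lo}e^{-\alpha u}\tl h(s+u,X(u))\,du+ e^{-\alpha\tau_\lo}R_\alpha\tl h(s+\tau_\lo,X(\tau_\lo))\Big\},
\end{align*}
which is exactly the representation \eqref{eqn:decomp_D} with $\varphi_1=\tl h$ and $\varphi_2=R_\alpha\tl h$, both in $C_0([0,\infty)\times E)$ (the latter by the weak Feller property of the resolvent). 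Hence $\{R_\alpha\tl h:\tl h\in C_0\}\subseteq\lde$, and since this family is dense, so is $\lde$.

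Before invoking density I would check that $\lde\subseteq C_0([0,\infty)\times E)$, so that the statement is meaningful. Continuity of an arbitrary $\varphi\in\lde$ is already supplied by Lemmas~\ref{lem:cont_stopped_potential} and \ref{lem:formerA2}. For the decay at infinity it is convenient to rewrite, again by the strong Markov property at $\tau_\lo$ applied to $\varphi_1$,
$$
\varphi(s,x)=R_\alpha\varphi_1(s,x)+\ee^x\big\{e^{-\alpha\tau_\lo}(\varphi_2-R_\alpha\varphi_1)(s+\tau_\lo,X(\tau_\lo))\big\},
$$
so that it suffices to treat $R_\alpha\psi$ and $(s,x)\mapsto\ee^x\{e^{-\alpha\tau_\lo}\psi(s+\tau_\lo,X(\tau_\lo))\}$ for $\psi\in C_0$. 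The first lies in $C_0$ by the weak Feller property. Decay as $s\to\infty$ (uniformly in $x$) is immediate for both terms: if $|\psi|\le\ve$ once the time argument exceeds $T$, then for $s\ge T$ every evaluation $\psi(s+\cdot,\cdot)$ is below $\ve$.

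The step I expect to be the main obstacle is the decay of the boundary term as $x\to\infty$ in $E$. Splitting $|\psi|\le\ve+\|\psi\|\,\ind{X(\tau_\lo)\in K}$ for a compact $K$ outside which $|\psi|<\ve$, the problem reduces to bounding $\ee^x\{e^{-\alpha\tau_\lo}\ind{X(\tau_\lo)\in K}\}$ for $x$ far from $K$. I would dominate $\ind{K}\le g$ by a continuous compactly supported $g$ with $g\equiv1$ on a neighbourhood of $K$, and compare with the resolvent. Using the strong Markov property at $\tau_\lo$ together with the short-time estimate of \eqref{eqn:short_time_cont} (Proposition~\ref{prop:fellercont}), which yields $\inf_{y\in K}\ee^y\{\int_0^\delta e^{-\alpha u}g(X(u))\,du\}\ge c>0$ for suitable $\delta,c$, one obtains $c\,\ee^x\{e^{-\alpha\tau_\lo}\ind{X(\tau_\lo)\in K}\}\le R_\alpha g(x)$. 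Since $R_\alpha g\in C_0(E)$ vanishes at infinity, so does the boundary term; the integral term is handled even more directly via $\ee^x\{\int_0^{\tau_\lo}e^{-\alpha u}\ind{X(u)\in K}\,du\}\le R_\alpha g(x)$. This establishes $\lde\subseteq C_0([0,\infty)\times E)$ and, combined with the density argument above, completes the proof.
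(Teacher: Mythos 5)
Your proof is correct, but it follows a genuinely different route from the paper's, whose entire proof is a citation: the authors assert the claim ``follows immediately from the proof of Lemma 4 in \cite{stettner}'', i.e.\ they defer to the finite-horizon argument (where the horizon is the deterministic time $T-s$) and leave the adaptation to the random horizon $\tau_\lo$ implicit. What you supply is exactly that adaptation, made self-contained from ingredients already in the paper: Dynkin's density of the resolvent range in $\mathcal C_0([0,\infty)\times E)$, quoted in the proof of Lemma \ref{lem:formerA2}, plus the observation that each resolvent function $R_\alpha \tl h$ itself lies in $\lde$ with $\varphi_1=\tl h$, $\varphi_2=R_\alpha\tl h$. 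Note that your key identity is literally the first display in the proof of Lemma \ref{lem:formerA2} read in the opposite direction: there the authors write $\ee^x\{e^{-\alpha\tau_\lo}h(s+\tau_\lo,X(\tau_\lo))\}=h(s,x)-\ee^x\{\int_0^{\tau_\lo}e^{-\alpha u}\tl h(s+u,X(u))\,du\}$ for $h=R_\alpha\tl h$, which is your strong-Markov splitting. Your approach buys a verifiable, self-contained proof (including the containment $\lde\subseteq \mathcal C_0$, with the clean comparison $c\,\ee^x\{e^{-\alpha\tau_\lo}\ind{X(\tau_\lo)\in K}\}\le R_\alpha g(x)$ via Proposition \ref{prop:fellercont}, which the paper never records); the paper's citation buys brevity at the cost of hiding the one step that actually differs from \cite{stettner}, namely the passage from a deterministic horizon to $\tau_\lo$.

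One caveat you should make explicit: the continuity of elements of $\lde$, which you take from Lemmas \ref{lem:cont_stopped_potential} and \ref{lem:formerA2}, is available only under (A1), an assumption mentioned neither in the statement of Lemma \ref{lem:density_D} nor in your proof. Without (A1) the containment $\lde\subseteq\mathcal C_0([0,\infty)\times E)$ can genuinely fail: for uniform translation to the right on $E=\er$ (a weak Feller standard process) with $\lo=\er\setminus\{0\}$, taking $\varphi_1=0$ and $\varphi_2\in\mathcal C_0$ with $\varphi_2(s,0)\neq 0$ gives $\varphi(s,x)=0$ for $x>0$ but $\varphi(s,0)=\varphi_2(s,0)\neq0$. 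This imprecision is inherited from the paper rather than introduced by you --- where the lemma is used (Theorem \ref{thm:cont_for_upjump}) assumption (A1) is in force, and the approximation half, which is all the paper actually needs there, is proved by your resolvent argument without (A1) --- but the hypothesis should be stated where you invoke those lemmas.
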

\begin{proof}
It follows immediately from the proof of Lemma 4 in \cite{stettner}.
\end{proof}

\begin{lemma}\label{lem:bar_w_beta}
If $G$ has decomposition \eqref{eqn:decomp_D} with $\varphi_1= g_1, \varphi_2=g_2 \in \mathcal C_0([0, \infty) \times E)$ then
$$
w^\beta(s, x) - G(s, x) \ge - \frac{\|f - g_1\|}{\alpha + \beta} - \ee^x \big\{ e^{-(\alpha + \beta) \tau_\lo} \| H - g_2 \| \big\}.
$$
If, moreover, $G \le H$ then
$$
w^\beta(s, x) - G(s, x) \ge - \frac{\|f - g_1\|}{\alpha + \beta}.
$$
\end{lemma}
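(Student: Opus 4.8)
The plan is to write both $w^\beta$ and $G$ using the same $(\alpha+\beta)$-discounting, so that their difference takes an especially transparent form. For $w^\beta$ I would use the equivalent representation \eqref{eqn:def_w_beta equiv}. For $G$ I would apply Lemma~\ref{lem:transf} (in the form \eqref{eq:5}) with the constant process $b(t)\equiv\beta$, $g=g_1$ and $h=g_2$; since $G$ is given by the decomposition \eqref{eqn:decomp_D} with discount $\alpha$, the lemma yields
\[
G(s,x)=\ee^x\Big\{\int_0^{\tau_\lo} e^{-(\alpha+\beta)u}\big[g_1(s+u,X(u))+\beta\,G(s+u,X(u))\big]\,du+e^{-(\alpha+\beta)\tau_\lo}g_2(s+\tau_\lo,X(\tau_\lo))\Big\}.
\]

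Subtracting this from \eqref{eqn:def_w_beta equiv}, the running part of the integrand becomes $(f-g_1)+\beta\big[(G-w^\beta)^+ - (G-w^\beta)\big]=(f-g_1)+\beta\,(w^\beta-G)^+$, while the terminal part becomes $H-g_2$. The key observation is that $\beta\,(w^\beta-G)^+\ge 0$, so discarding it produces the lower bound $w^\beta(s,x)-G(s,x)\ge \ee^x\big\{\int_0^{\tau_\lo}e^{-(\alpha+\beta)u}(f-g_1)(s+u,X(u))\,du + e^{-(\alpha+\beta)\tau_\lo}(H-g_2)(s+\tau_\lo,X(\tau_\lo))\big\}$. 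Bounding the running term below by $-\|f-g_1\|\int_0^\infty e^{-(\alpha+\beta)u}\,du=-\frac{\|f-g_1\|}{\alpha+\beta}$ and the terminal term below by $-\ee^x\{e^{-(\alpha+\beta)\tau_\lo}\|H-g_2\|\}$ gives the first inequality.

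For the refined bound under $G\le H$, I would note that on $\lo^c$ we have $\tau_\lo=0$ $\prob^x$-a.s., so the decomposition \eqref{eqn:decomp_D} forces $G=g_2$ there; consequently $H-g_2=H-G\ge 0$ on $\lo^c$. Since $X(\tau_\lo)\in\lo^c$, the terminal integrand $(H-g_2)(s+\tau_\lo,X(\tau_\lo))$ is nonnegative, so the whole terminal expectation may be dropped from the lower bound, leaving $w^\beta(s,x)-G(s,x)\ge-\frac{\|f-g_1\|}{\alpha+\beta}$. The only genuinely delicate points are the algebraic identity $a^+-a=(-a)^+$ that turns the penalty combination nonnegative and the identification $G=g_2$ on $\lo^c$; everything else is routine estimation, so I do not anticipate a real obstacle.
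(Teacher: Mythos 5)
Your proposal is correct and follows essentially the same route as the paper: the paper defines $\bar w^\beta = w^\beta - G$, subtracts at the $\alpha$-discount level, and then applies Lemma~\ref{lem:transf} with $b\equiv\beta$, using $(\bar w^\beta)^- + \bar w^\beta \ge 0$ — exactly your identity $(G-w^\beta)^+ - (G-w^\beta) = (w^\beta-G)^+ \ge 0$ — and the same observation that $g_2 = G$ on $\lo^c$ (hence $H - g_2 \ge 0$ at $X(\tau_\lo)$) for the refined bound. Your only deviation is transforming $w^\beta$ and $G$ to $(\alpha+\beta)$-discounting separately before subtracting rather than after, which is an immaterial reordering.
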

\begin{proof}
Define $\bar w^\beta(s,x) = w^\beta (s,x) - G (s,x)$. Decomposition \eqref{eqn:decomp_D} of $G$ and representation \eqref{eqn:def_w_beta} of $w^\beta$ imply
\begin{align*}
\bar w^\beta (s, x) &= \ee^x\Big\{\int_0^{\tau_\lo} e^{-\alpha u} \big[ f - g_1
+ \beta (\bar w^\beta )^- \big]\big(s+u,X(u)\big) du +e^{-\alpha \tau_\lo} (H - g_2)\big(s + \tau_\lo, X(\tau_\lo)\big)\Big\}.
\end{align*}
By Lemma \ref{lem:transf} we have the following equivalent form of the above equation
\begin{multline*}
\bar w^\beta (s, x) = \ee^x\Big\{\int_0^{\tau_\lo} e^{-(\alpha+\beta) u} \big[ f - g_1 + \beta (\bar w^\beta)^- +  \beta \bar w^\beta \big]\big(s+u,X(u)\big) du\\
+e^{-(\alpha+\beta) \tau_\lo} (H - g_2)\big(s + \tau_\lo, X(\tau_\lo)\big)\Big\}.
\end{multline*}
Since $(\bar w^\beta)^- + \bar w^\beta \ge 0$, we obtain
\begin{align*}
\bar w^\beta (s, x) &\ge \ee^x\Big\{\int_0^{\tau_\lo} e^{-(\alpha + \beta) u} (f - g_1)\big(s+u,X(u)\big) du+e^{-(\alpha + \beta) \tau_\lo} (H - g_2)\big(s + \tau_\lo, X(\tau_\lo)\big)\Big\}.
\end{align*}
This implies the first statement of the lemma.

Due to decomposition \eqref{eqn:decomp_D} we have $g_2(s,x) = G(s,x)$ for $x \notin \lo$. Together with the condition $G \le H$ this yields $(H - g_2)\big(s + \tau_\lo, X(\tau_\lo)\big) \ge 0$.
\end{proof}

\begin{thm}\label{thm:cont_for_upjump}
Assume (A1) and $G \le H$. The value function $w$ is continuous on $E$ and an optimal stopping moment is given by
\begin{equation}\label{eqn:opt_stop_formula}
\tau^*(s) = \inf \{ t \ge 0: w(s+t,X(t)) \le G(s+t,X(t)) \text{\ \ or\ \ } X(t)\notin \lo \}.
\end{equation}
\end{thm}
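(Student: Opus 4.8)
The plan is to establish continuity of $w$ first, and then verify optimality of the candidate stopping time $\tau^*(s)$. For continuity, recall from Corollary \ref{cor:lower_cont_w} that $w$ is already lowersemicontinuous under (A1), so it suffices to prove that $w$ is uppersemicontinuous. Since $w = \lim_{\beta \to \infty} w^\beta$ pointwise and each $w^\beta$ is continuous (Corollary \ref{cor:cont_wbeta}), the natural route is to control the error $w - w^\beta$ uniformly on compact sets. The assumption $G \le H$ is what makes this feasible: intuitively, when the boundary payoff $H$ dominates the continuation-region payoff $G$, there is no downward jump at $\partial \lo$ (by Theorem \ref{thm:limit} the limit is $G \vee H = H$), so the obstruction to uniform convergence that a discontinuity would create is absent.

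First I would exploit Lemma \ref{lem:bar_w_beta}. By Lemma \ref{lem:density_D} the set $\lde$ is dense in $C_0$, so for any $\ve > 0$ I can approximate $G$ (after a harmless truncation making it vanish at infinity, handled via Proposition \ref{prop:01} as in the proof of Lemma \ref{lem:formerA2}) by a function admitting decomposition \eqref{eqn:decomp_D} with $g_1$ close to $f$ and $g_2$ close to $H$. When $G \le H$, the second part of Lemma \ref{lem:bar_w_beta} gives the clean lower bound
$$
0 \le w(s,x) - w^\beta(s,x) \le \frac{\|f - g_1\|}{\alpha + \beta} + (\text{error from the approximation of } G),
$$
where the first term vanishes as $\beta \to \infty$ and the approximation error is uniformly small. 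This forces $w^\beta \to w$ uniformly on compact sets, and a uniform limit of continuous functions is continuous; hence $w \in C(E)$.

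For the optimal stopping time, I would use the representation \eqref{eqn:wbeta_form_tau}: for every $\tau$,
$$
w^\beta(s,x) \ge J(s,x,\tau) - \ee^x \big\{ \ind{\tau < \tau_\lo} e^{-\alpha \tau} (G - w^\beta)^+ (s+\tau, X(\tau)) \big\},
$$
so $w(s,x) \ge \sup_\tau J(s,x,\tau)$ follows by letting $\beta \to \infty$, and the reverse inequality is the definition of $w$; the point is to show $\tau^*$ attains the supremum. Mimicking the proof of Lemma \ref{lem:wbeta_form_tau}, I would apply the Markov property up to the stopping time $\tau^* \wedge \tau_\lo$ to obtain the dynamic-programming identity, then argue that on $\{\tau^* < \tau_\lo\}$ continuity of $w$ and $G$ forces $w(s+\tau^*, X(\tau^*)) = G(s+\tau^*, X(\tau^*))$ (the stopping payoff equals the value), while on $\{\tau^* \ge \tau_\lo\}$ the stopping payoff is $H$; in both cases the supermartingale inequality becomes an equality, yielding $w(s,x) = J(s,x,\tau^*)$.

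The main obstacle I anticipate is twofold and both points hinge on $G \le H$. First, the uniform convergence of $w^\beta$ to $w$: the density/truncation argument must be executed carefully so that the approximation error is uniform over the relevant compact set, and the sign condition $(H - g_2) \ge 0$ from Lemma \ref{lem:bar_w_beta} (which uses $g_2 = G \le H$ off $\lo$) is exactly what removes the potentially large, non-vanishing boundary term. Second, verifying that $\tau^*$ is genuinely optimal requires knowing $w$ is continuous (to identify $w$ with $G$ on the stopping set) and also that $\tau^* \wedge \tau_\lo$ produces no loss at the boundary — here again $G \le H$ guarantees that exiting through $\partial \lo$ collects the dominating payoff $H$, so no value is sacrificed by not stopping strictly before $\tau_\lo$.
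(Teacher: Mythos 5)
Your proposal splits into a continuity half and an optimality half; the continuity half is essentially the paper's argument, but the optimality half has a genuine gap. You propose to ``apply the Markov property up to the stopping time $\tau^*\wedge\tau_\lo$ to obtain the dynamic-programming identity'' for $w$, mimicking Lemma \ref{lem:wbeta_form_tau}. That lemma's proof works only because $w^\beta$ solves the explicit fixed-point equation \eqref{eqn:def_w_beta}, to which the Markov property applies directly, giving $w^\beta(s,x)=\ee^x\{\int_0^{\tau_\lo\wedge\sigma}\cdots+e^{-\alpha(\tau_\lo\wedge\sigma)}w^\beta(\cdots)\}$ for \emph{every} stopping time $\sigma$. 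The value function $w$ is defined as a supremum and satisfies no such equation a priori: the inequality
\begin{equation*}
w(s,x)\le \ee^x\Big\{\int_0^{\tau^*(s)\wedge\tau_\lo}e^{-\alpha u}f\big(s+u,X(u)\big)\,du
+e^{-\alpha(\tau^*(s)\wedge\tau_\lo)}\,w\big(s+\tau^*(s)\wedge\tau_\lo,X(\tau^*(s)\wedge\tau_\lo)\big)\Big\}
\end{equation*}
is precisely the hard content of the theorem (it is \eqref{eqn:Belstopped} in the paper, followed by the limit $\ve\to 0$), and the bulk of the paper's proof is devoted to it. The paper obtains it by a limiting procedure: introduce the $\ve$-optimal times $\tau^\ve(s)$ and the penalized times $\tau_\beta(s)$ of \eqref{eqn:betastop_formula}, apply the Markov property to $w^\beta$ stopped at $\tau_\beta(s)\wedge\tau^\ve(s)\wedge\tau_\lo\wedge T$ (where the penalty term vanishes), use the uniform-on-compacts convergence $w^\beta\to w$ to show $\prob^x\{\tau_\beta(s)\wedge T<\tau^\ve(s)\wedge T \text{ and } A_\delta\}\to 0$, then let $\beta\to\infty$, $\delta\to 0$, $T\to\infty$, and only at the very end $\ve\to 0$, the last step using quasi-left-continuity of $(X(t))$ and the already-established continuity of $w$. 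Note that working directly with $\tau^*$ instead of $\tau^\ve$ would break this scheme: since $w^\beta<w$, one can only conclude that eventually $\tau_\beta\ge\tau^\ve$ on compacts, never $\tau_\beta\ge\tau^*$, so the $\ve$-layer is essential. Alternatively one could invoke general Snell-envelope theory (Mertens, El Karoui), but then the regularity hypotheses (e.g.\ upper semicontinuity in expectation of the reward process, which is discontinuous across $\partial\lo$) must be verified; your sketch does neither, so as written the optimality claim is asserted rather than proved.

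On the continuity half, your structure (Lemma \ref{lem:bar_w_beta} plus \eqref{eqn:wbeta_form_tau}, density of $\lde$ from Lemma \ref{lem:density_D}, truncation via Proposition \ref{prop:01}, uniform convergence, Dini/uniform-limit) matches the paper, but one detail is wrong as stated: you cannot approximate $G$ by an element of $\lde$ ``with $g_1$ close to $f$ and $g_2$ close to $H$.'' The decomposition \eqref{eqn:decomp_D} forces $g_2=G_n$ on $\lo^c$ (this is observed in the proof of Lemma \ref{lem:bar_w_beta}), so $g_2$ cannot be made close to $H$ unless $G\approx H$ off $\lo$, and there is no reason $g_1$ can be chosen near $f$. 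Neither is needed: for a fixed approximant $G_n\in\lde$ with \emph{any} decomposition $(g_1,g_2)$, the term $\|f-g_1\|/(\alpha+\beta)$ vanishes as $\beta\to\infty$, while the boundary term is eliminated by the sign condition $G\le H$ through the second assertion of Lemma \ref{lem:bar_w_beta} (with a small extra error coming from $G_n\le H+\|G-G_n\|$, which must be tracked through the estimate). With that correction your continuity argument coincides with the paper's.
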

\begin{proof}
Functions $w^\beta$ are continuous (by Lemma \ref{lem:solv_w_beta}), increasing in $\beta$ and dominated by $w$. Therefore, it suffices to estimate the difference $w - w^\beta$. For functions $G$ with decomposition \eqref{eqn:decomp_D},  Lemma \ref{lem:bar_w_beta} and equation \eqref{eqn:wbeta_form_tau} give
$$
w^\beta(s,x) \ge w (s,x) - \frac{\|f - g_1\|}{\alpha + \beta}.
$$
Since $\lde$ is dense in $\mathcal C_0([0, \infty) \times E)$ (Lemma \ref{lem:density_D}) we also obtain the continuity of $w$ for $G \in \mathcal C_0([0, \infty) \times E)$.

The extension of this result to continuous bounded $G$ uses Proposition \ref{prop:01} in the appendix. Fix a compact set $K \subseteq E$ and $S \ge 0$. For any $T, \ve > 0$ there is a compact set $L \subseteq E$ such that
$$
\prob^x\big( X(t) \notin L \text{ for some $t \in [0, T]$} \big) < \ve, \qquad x \in K.
$$
Define $\tl G(s,x) = e^{-\rho(x, L) - (s - (S + T))^+} G(s,x)$, where $\rho(x, L)$ denotes the distance of $x$ from the set $L$. Let $\tl w$ be the value function corresponding to $\tl G$. Since $\tl G \in \mathcal C_0([0, \infty) \times E)$, preceding results imply that $\tl w$ is continuous. We also have $\|w(s,x) - \tl w(s,x)\| \le (e^{-\alpha T} + \ve) \big( \|f\| / \alpha + \|G\| + \|H\|)$ for $x \in K$ and $s \in [0, S]$. Since $T$ and $\ve$ are arbitrary this implies continuity of $w$ on $[0, S] \times K$. By the arbitrariness of $S, K$ the value function $w$ is continuous on its whole domain.

Define for $\ve>0$
\begin{equation}\label{eqn: epsiopt_formula}
\tau^\ve(s) = \inf \{ t \ge 0: w(s+t,X(t)) \le G(s+t,X(t))+\ve \text{\ \ or\ \ } X(t)\notin \lo \}
\end{equation}
and
\begin{equation}\label{eqn:betastop_formula}
\tau_\beta(s) = \inf \{ t \ge 0: w^\beta(s+t,X(t)) \le G(s+t,X(t)) \text{\ \ or\ \ } X(t)\notin \lo \}.
\end{equation}
Fix $\delta > 0$ and $T > 0$. By Proposition \ref{prop:01} for a given $x \in E$ there is a compact set $K_\delta$ such that $\prob^x \{A_\delta\} \ge 1-\delta$, where $A_\delta = \{ X(t) \in K_\delta \ \forall t \in [0, T] \}$.
From \eqref{eqn:def_w_beta}, due to the Markov property of $(X(t))$, we obtain
$$
\begin{aligned}
w^\beta (s, x)
&\le \Big[\prob^x \{A_\delta^c\} + e^{-\alpha T} \prob^x \{ A_\delta \text{ and } \tau_\beta(s)\vee \tau^\ve(s) > T \} \Big] \big(\|G\| + \|H\| + \frac{\|f\|}{\alpha}\big) \\
&\mop{8}+ \ee^x\Big\{\ind{A_\delta} \int_0^{\sigma^*_{\ve, \beta,T}(s)} e^{-\alpha u} \big[ f+ \beta (G - w^\beta)^+ \big]\big(s+u,X(u)\big) du\\[3pt]
&\mop{32} +\ind{A_\delta} e^{-\alpha \sigma^*_{\ve, \beta,T}(s)} w^\beta\big(s + \sigma^*_{\ve, \beta,T}(s), X(\sigma^*_{\ve, \beta,T}(s))\big)\Big\},
\end{aligned}
$$
where $\sigma^*_{\ve, \beta,T}(s) = \tau_\beta(s)\wedge \tau^\ve(s)\wedge\tau_\lo\wedge T$.
Notice that by the uniform convergence on compact subsets of $w^\beta$ to $w$ we have $\prob^x\big\{\tau_\beta(s) \wedge T < \tau^\ve(s) \wedge T \text{ and } A_\delta \big\} \to 0$ as $\beta \to \infty$. Therefore letting $\beta \to \infty$  we obtain by Dominated Convergence Theorem
$$
\begin{aligned}
w(s, x) &\le \big(\delta +e^{-\alpha T}\big)  \big(\|G\| + \|H\| + \frac{\|f\|}{\alpha}\big) \\
&\mop{8} + \ee^x\Big\{\ind{A_\delta} \int_0^{\tau^\ve(s)\wedge \tau_\lo\wedge T} e^{-\alpha u}  f\big(s+u,X(u)\big)du\\
&\mop{32} +\ind{A_\delta} e^{-\alpha \tau^\ve(s)\wedge\tau_\lo\wedge T} w\big(s + \tau^\ve(s)\wedge\tau_\lo\wedge T, X(\tau^\ve(s)\wedge\tau_\lo\wedge T)\big)\Big\}.
\end{aligned}
$$
Proposition \ref{prop:01} implies that $(A_\delta)$ form an increasing sequence of subsets when $\delta \to 0$ and $\lim_{\delta \to 0} \prob^x\{A_\delta\} = 1$. Therefore, letting $\delta \to 0$ we get
$$
\begin{aligned}
w(s, x) &\le e^{-\alpha T}\big(\|G\| + \|H\| + \frac{\|f\|}{\alpha}\big) \\
&\mop{8} + \ee^x\Big\{\int_0^{\tau^\ve(s)\wedge \tau_\lo\wedge T} e^{-\alpha u} f\big(s+u,X(u)\big) du\\
&\mop{32} +e^{-\alpha \tau^\ve(s)\wedge\tau_\lo\wedge T} w\big(s + \tau^\ve(s)\wedge\tau_\lo\wedge T, X(\tau^\ve(s)\wedge\tau_\lo\wedge T)\big)\Big\}.
\end{aligned}
$$
Now taking the limit $T \to \infty$ yields
\begin{equation}\label{eqn:Belstopped}
w(s, x) \le \ee^x\Big\{\int_0^{\tau^\ve(s)\wedge \tau_\lo} e^{-\alpha u} f\big(s+u,X(u)\big) du+e^{-\alpha \tau^\ve(s)\wedge\tau_\lo} w\big(s + \tau^\ve(s)\wedge\tau_\lo, X(\tau^\ve(s)\wedge\tau_\lo)\big)\Big\}.
\end{equation}
Note that $\tau^\ve(s) \to \tau^*(s)$, as $\ve \to 0$, and, further, by quasi-leftcontinuity of the process $(X(t))$ (see, e.g., \cite{dynkin}) we also have $X(\tau^\ve(s))\to X(\tau^*(s))$, $\prob^x$-a.s.. Consequently letting $\ve \to 0$ in \eqref{eqn:Belstopped} and using the continuity of $w$ give
$$
\begin{aligned}
w(s, x) &\le \ee^x\Big\{\int_0^{\tau^*(s)\wedge \tau_\lo} e^{-\alpha u} f\big(s+u,X(u)\big) du +e^{-\alpha \tau^*(s)\wedge\tau_\lo} w\big(s + \tau^*(s)\wedge\tau_\lo, X(\tau^*(s)\wedge\tau_\lo)\big)\Big\}.
\end{aligned}
$$
This implies that there is an optimal stopping time dominating $\tau^*(s)$. The optimality of $\tau^*(s)$ is now obvious.
\end{proof}


To prove the continuity of the value function $w$ without the requirement of an upward jump we introduce the following assumptions:
\begin{assumption}
\item[(A2)] \label{ass:cont}
$\lim_{\eta \to 0} \prob^x\{ \tau_\lo < \eta \} = 0$ uniformly in $x$ from compact subsets of $\lo$.
\item[(A3)] \label{ass:strong_feller}
$(X(t))$ is strongly Feller, i.e., the mapping $x \mapsto \ee^x \{ h(X(t)) \}$ is continuous for any measurable bounded function $h$ and $t > 0$.
\end{assumption}

Before we formulate Theorem \ref{thm:cont_w_in _O} we prove three auxiliary results. Lemma \ref{lem:a3'} shows that under \ref{ass:strong_feller} the time-state process semigroup maps time-continuous bounded functions into functions continuous in both parameters. Lemma \ref{lem:rightcont} states that the weak Feller continuity of the process $X(t)$ is sufficient for the continuity of the value function $w$ in the time parameter $s$. Lemma \ref{lem:a2} shows that \ref{ass:cont} follows from \ref{ass:a1}.

\begin{lemma}\label{lem:a3'}
Under assumption \ref{ass:strong_feller}, the mapping
$$
(s,x)\mapsto \ee^{x}\left\{F(s+h,X(h))\right\}
$$
is continuous for $h>0$ and a bounded measurable function $F$, provided that the mapping $s \mapsto F(s,x)$ is continuous uniformly in $x$ in compact subsets of $E$.
\end{lemma}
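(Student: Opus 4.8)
The plan is to establish joint continuity by taking an arbitrary convergent sequence $(s_n, x_n) \to (s, x)$ and splitting the increment into a spatial part, which I will control with the strong Feller assumption (A3), and a temporal part, which I will control with the assumed uniform-in-$x$ continuity of $s \mapsto F(s,x)$ on compacts. Concretely, I would write
$$
\big| \ee^{x_n}\{F(s_n + h, X(h))\} - \ee^{x}\{F(s+h, X(h))\} \big| \le A_n + B_n,
$$
where $A_n = \ee^{x_n}\{|F(s_n+h, X(h)) - F(s+h, X(h))|\}$ isolates the change in the time argument and $B_n = \big|\ee^{x_n}\{F(s+h, X(h))\} - \ee^{x}\{F(s+h, X(h))\}\big|$ isolates the change in the starting point.

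The term $B_n$ should be immediate. Since $F(s+h, \cdot)$ is a fixed bounded measurable function, assumption (A3) gives continuity of $x \mapsto \ee^x\{F(s+h, X(h))\}$, whence $B_n \to 0$ as $x_n \to x$. It is here that the hypothesis $h>0$ is used, this being exactly the range for which the strong Feller property is postulated.

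The temporal term $A_n$ is where the work lies, and I would treat it by a localization argument, because $s\mapsto F(s,x)$ is only assumed uniformly continuous over compact subsets of $E$ and so $X(h)$ must first be confined to a compact set. Fixing $\ve>0$, by Proposition \ref{prop:01} I would choose a compact $K \subseteq E$ with $\prob^{x_n}\{X(h)\notin K\}\le \ve$ for all large $n$, and then estimate
$$
A_n \le \ee^{x_n}\{\ind{X(h)\in K}\,|F(s_n+h, X(h)) - F(s+h, X(h))|\} + 2\|F\|\,\ve.
$$
By the uniform continuity of $s\mapsto F(s,\cdot)$ on $K$, once $|s_n - s|$ is small enough the integrand on $\{X(h)\in K\}$ is at most $\ve$, giving $A_n \le \ve(1 + 2\|F\|)$ for all large $n$. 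Combining with $B_n\to 0$ yields $\limsup_n \big| \ee^{x_n}\{F(s_n + h, X(h))\} - \ee^{x}\{F(s+h, X(h))\} \big| \le \ve(1+2\|F\|)$, and letting $\ve \to 0$ gives continuity at the arbitrary point $(s,x)$.

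I expect the only delicate point to be ensuring that the tightness furnished by Proposition \ref{prop:01} is uniform along the approaching sequence $x_n$ rather than merely at the limit $x$; this is precisely the form in which it was invoked in the proof of Lemma \ref{lem:joint Feller}, so I would reuse that mechanism verbatim. Everything else is a routine triangle-inequality bookkeeping.
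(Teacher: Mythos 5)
Your proof is correct and follows essentially the same route as the paper: both arguments split the increment into a temporal part, handled by localizing $X(h)$ in a compact set via Proposition \ref{prop:01} and invoking the uniform-in-$x$ continuity of $s \mapsto F(s,x)$, and a spatial part, handled by the strong Feller assumption \ref{ass:strong_feller} applied to the bounded measurable function $F(s+h,\cdot)$. The only cosmetic difference is that the paper truncates first, defining the auxiliary function $\Phi(s,x)=\ee^x\big\{\ind{X(h)\in L}F\big(s+h,X(h)\big)\big\}$ as a uniform approximation and then splitting its increment, whereas you split first and truncate only inside the temporal term; the ingredients and their roles are identical.
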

\begin{proof}
Fix a compact set $K \subseteq E$ and $T, \ve > 0$. By Proposition \ref{prop:01} there is a compact set $L \subseteq E$ such that $\sup_{x \in K} \prob^x \{ X(h) \notin L\} < \ve$. Hence for $(s,x) \in [0,T] \times K$ we have
$$
\big| \ee^x \big\{ F\big(s+h,X(h)\big) \big\} - \Phi(s,x) \big| < \|F\| \ve,
$$
where $\Phi(s,x) = \ee^x \big\{ \ind{X(h) \in L} F\big(s+h,X(h)\big) \big\}$. Let $(s_n, x_n) \to (s,x)$ such that $(s_n, x_n) \in [0,T] \times K$ for all $n$. By the continuity of $F$ in $s$ and by assumption \ref{ass:strong_feller}, for sufficiently large $k$, we have
\begin{multline*}
\lim_{n \to \infty} \big| \Phi(s_n, x_n) - \Phi(s,x) \big|
 \le \lim_{n \to \infty} \big| \Phi(s_k, x_n) - \Phi(s,x_n) \big| + \lim_{n \to \infty} \big| \Phi(s, x_n) - \Phi(s,x) \big| = \ve + 0.
\end{multline*}
By the arbitrariness of $\ve$ this completes the proof.
\end{proof}

\begin{lemma}\label{lem:rightcont}
The mapping $s \mapsto w(s,x)$ is continuous uniformly in $x$ in compact subsets of $E$.
\end{lemma}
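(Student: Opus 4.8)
The plan is to exploit that $w(s,\cdot)=\sup_\tau J(s,\cdot,\tau)$ is a supremum over stopping times, and that a supremum changes by no more than the uniform size of a perturbation of its arguments; hence it suffices to bound $|J(s,x,\tau)-J(s',x,\tau)|$ uniformly in $\tau$ and in $x$ on a fixed compact set by a quantity tending to $0$ as $|s-s'|\to0$. Writing out this difference, the three contributions — the running cost integral of $f$, the terminal payoff $G$ on $\{\tau<\tau_\lo\}$, and the terminal payoff $H$ on $\{\tau\ge\tau_\lo\}$ — all differ only through the shift in the first (time) argument of $f$, $G$, $H$; the stopping times, the discount factors and the trajectory are identical for $s$ and $s'$, since both functionals use the same process started at $x$.

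First I would fix a compact set $K\subseteq E$ and a bounded window $[0,S]$ for $s,s'$, then choose a large horizon $T$ and small $\ve>0$. Proposition \ref{prop:01} supplies a compact $L\subseteq E$ with $\sup_{x\in K}\prob^x(\exists\, t\in[0,T]:X(t)\notin L)<\ve$. On the compact product $[0,S+T]\times L$ the bounded continuous functions $f,G,H$ are uniformly continuous, so there is a common modulus $\omega$ with $\omega(\delta)\to0$ such that $|f(s+u,y)-f(s'+u,y)|$, $|G(s+u,y)-G(s'+u,y)|$ and $|H(s+u,y)-H(s'+u,y)|$ are all $\le\omega(|s-s'|)$ for $u\in[0,T]$ and $y\in L$.

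Next I would split the expectation defining $J(s,x,\tau)-J(s',x,\tau)$ according to the event $A=\{X(t)\in L\ \forall t\in[0,T]\}$ and its complement, and according to whether $\tau\wedge\tau_\lo\le T$ or $\tau\wedge\tau_\lo>T$. On $A$, up to time $T$ the state lies in $L$ and $\omega$ controls both the integrand and the terminal term, contributing at most $(1+1/\alpha)\,\omega(|s-s'|)$; the part beyond $T$ is damped by the discount factor $e^{-\alpha T}$ and bounded using only $\|f\|,\|G\|,\|H\|$, contributing $O(e^{-\alpha T})$. On $A^c$ I use the crude bound $2\|f\|/\alpha+2(\|G\|\vee\|H\|)$ times $\prob^x(A^c)<\ve$. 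This yields, uniformly in $\tau$ and $x\in K$, an estimate of the form $|J(s,x,\tau)-J(s',x,\tau)|\le(1+1/\alpha)\,\omega(|s-s'|)+C(e^{-\alpha T}+\ve)$.

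Passing to the supremum over $\tau$ gives the same bound for $|w(s,x)-w(s',x)|$ for every $x\in K$. Fixing $T$ and $\ve$ first to make $C(e^{-\alpha T}+\ve)$ arbitrarily small, and then taking $|s-s'|$ small enough to make $\omega(|s-s'|)$ small, delivers exactly the required equicontinuity in $s$, uniform over $x\in K$; arbitrariness of $S$ and $K$ completes the argument. The main obstacle is that $f,G,H$ are only uniformly continuous on compacts while the trajectory and the terminal state may escape to infinity, so one must truncate time using the discount $e^{-\alpha T}$ and truncate space using the tightness estimate of Proposition \ref{prop:01} simultaneously. I note that only the weak Feller property enters here — none of (A1)–(A3) is needed — in accordance with the remark preceding the statement.
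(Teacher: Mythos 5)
Your proposal is correct and follows essentially the same route as the paper's proof: truncate the stopping time in time via the discount factor $e^{-\alpha T}$, localize the trajectory to a compact set $L$ via Proposition \ref{prop:01}, use uniform continuity of $f$, $G$, $H$ on the resulting compact product set to bound $|J(s,x,\tau)-J(s',x,\tau)|$ uniformly in $\tau$ and $x\in K$, and pass to the supremum. Your observation that only the weak Feller property (through Proposition \ref{prop:01}) is needed, and none of (A1)--(A3), also matches the paper.
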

\begin{proof}
Assume that $s_n\to s$ and fix a compact set $K\subseteq E$. Since functions $f$, $G$ and $H$ are bounded and the discount rate $\alpha>0$, for any $\ve>0$ there is $T>0$ such that $|J(s_n,x,\tau)-J(s_n,x,\tau \wedge T)|\leq \ve$ for all $x \in K$ and $n=1, 2, \ldots$. By Proposition \ref{prop:01} there is a compact set $L \subseteq E$ such that for all $x \in K$ and $\tau\leq T$ we have
\begin{align*}
&\ee^x\ind{\exists_{t\in [0,T]} X(t)\notin L} \Big\{ \int_0^{\tau \wedge \tau_\lo} e^{-\alpha u} f\big(s+u,X(u)\big) du \\
&\mop{18}+ \ind{\tau<\tau_\lo}e^{-\alpha \tau} G\big(s+\tau,X(\tau)\big) +\ind{\tau\geq \tau_\lo} e^{-\alpha \tau_\lo} H\big(s + \tau_\lo, X(\tau_\lo)\big)\Big\} \le \ve.
\end{align*}
Uniform continuity of the functions $f$, $G$, and $H$ on $[0,T]\times L$ yields
\begin{align*}
\ee^x\ind{\forall_{t\in [0,T]} X(t)\in L} \Big\{&\int_0^{\tau \wedge \tau_\lo} e^{-\alpha u} \left[f\big(s+u,X(u)\big)-f\big(s_n+u,X(u)\big)\right] du \\
&\mop{0}+ \ind{\tau<\tau_\lo}e^{-\alpha \tau} \left(G\big(s+\tau,X(\tau)\big)- G\big(s_n+\tau,X(\tau)\big)\right)\\
&\mop{0}+ \ind{\tau\geq \tau_\lo} e^{-\alpha \tau_\lo} \left(H\big(s + \tau_\lo, X(\tau_\lo)\big)-H\big(s_n + \tau_\lo, X(\tau_\lo)\big)\right)\Big\}
\leq \ve
\end{align*}
for $\tau \le T$, a sufficiently large $n$ and $x \in K$. Consequently $w(s_n,x) \to w(s,x)$ as $n\to \infty$ uniformly in $x \in K$.
\end{proof}

\begin{lemma}\label{lem:a2}
Assumption \ref{ass:a1} implies \ref{ass:cont}.
\end{lemma}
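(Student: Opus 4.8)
The plan is to recognize $\prob^x\{\tau_\lo \le \eta\}$ as one minus the stopped semigroup evaluated on the constant function $1$, and then to extract the required uniformity from a Dini-type monotonicity argument. The point is that \ref{ass:a1} supplies continuity of the relevant functions, while the uniform (rather than merely pointwise) statement in \ref{ass:cont} comes for free once we are on a compact subset of $\lo$ and the convergence is monotone.

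First I would fix a compact set $K \subseteq \lo$ and set
$h_\eta(x) = P^{\tau_\lo}_\eta 1(x) = \prob^x\{\tau_\lo > \eta\}$.
Since the constant function $1$ is continuous and bounded, assumption \ref{ass:a1} guarantees that each $h_\eta$ is continuous on $E$. Because
$\prob^x\{\tau_\lo < \eta\} \le \prob^x\{\tau_\lo \le \eta\} = 1 - h_\eta(x)$,
it suffices to prove that $1 - h_\eta \to 0$ uniformly on $K$ as $\eta \to 0$.

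Next I would analyse the monotone behaviour in $\eta$. As $\eta \downarrow 0$ the events $\{\tau_\lo > \eta\}$ increase to $\{\tau_\lo > 0\}$, so $h_\eta(x)$ increases to $\prob^x\{\tau_\lo > 0\}$. For $x \in \lo$ the right-continuity of the paths of $(X(t))$ together with the openness of $\lo$ forces $\tau_\lo > 0$ almost surely, so this pointwise limit equals the constant $1$ on $\lo$. Thus $(h_\eta)$ is a monotone family of continuous functions converging pointwise on the compact set $K$ to the continuous limit $1$.

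Finally, Dini's theorem applied along any sequence $\eta_n \downarrow 0$ yields uniform convergence $h_{\eta_n} \to 1$ on $K$; monotonicity of $\sup_{x \in K}\bigl(1 - h_\eta(x)\bigr)$ in $\eta$ then upgrades this to convergence as $\eta \to 0$ through all values. Hence
$\sup_{x \in K}\prob^x\{\tau_\lo < \eta\} \le \sup_{x \in K}\bigl(1 - h_\eta(x)\bigr) \to 0$,
which is precisely \ref{ass:cont}. The only genuinely delicate point is the identification of the pointwise limit as the continuous constant $1$ on $\lo$, i.e. that $\prob^x\{\tau_\lo = 0\} = 0$ for $x \in \lo$; once that and the continuity from \ref{ass:a1} are in hand, the remainder is the routine Dini mechanism.
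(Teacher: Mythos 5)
Your proof is correct, and it takes a genuinely more direct route than the paper's. The paper works with the Laplace transform $g_\gamma(x) = \ee^x\{e^{-\gamma \tau_\lo}\}$: its continuity is deduced from Lemma \ref{lem:cont_stopped_potential}, the pointwise limit $g_\gamma \to 0$ on $\lo$ follows by dominated convergence, Dini's theorem gives uniformity on compact subsets of $\lo$, and finally Chebyshev's inequality $\prob^x\{\tau_\lo < \eta\} \le e\, g_{1/\eta}(x)$ translates this into \ref{ass:cont}. You instead apply \ref{ass:a1} directly to the constant function $\mathbf{1}$, obtaining continuity of $h_\eta = P^{\tau_\lo}_\eta \mathbf{1}$, i.e.\ of $x \mapsto \prob^x\{\tau_\lo > \eta\}$, with no intermediate lemma and no Chebyshev step; the Dini mechanism is then the same. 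Incidentally, your key identity $h_\eta = P^{\tau_\lo}_\eta \mathbf{1}$ is exactly the one the paper invokes later, in Lemma \ref{lem:a1_to_a5}, so your argument in effect gives Lemmas \ref{lem:a2} and \ref{lem:a1_to_a5} a common proof. Both arguments rest on the same nontrivial fact, namely $\prob^x\{\tau_\lo = 0\} = 0$ for $x \in \lo$ (right-continuity of paths plus openness of $\lo$): you state it explicitly, whereas the paper uses it implicitly in its dominated-convergence step. Your handling of the continuous-parameter Dini issue (passing to sequences $\eta_n \downarrow 0$ and then using monotonicity of $\sup_{x\in K}\bigl(1-h_\eta(x)\bigr)$) is also sound. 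What the paper's route buys is reuse of machinery already established for other purposes; what yours buys is brevity and the elimination of the Chebyshev detour.
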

\begin{proof}
By Lemma \ref{lem:cont_stopped_potential} the function $g_{\gamma}(x) = \ee^x \{ e^{-\gamma \tau_\lo}\}$ is continuous on $\lo$ for any $\gamma > 0$. By dominated convergence theorem $g_\gamma(x)$ converges to $0$ when $\gamma \to \infty$ and $x \in \lo$. This convergence is monotone and, due to Dini's theorem, uniform on compact subsets of $\lo$. Chebyshev's theorem yields
$$
\prob^x \{ \tau_\lo < \eta \} = \prob^x \{ e^{-\tau_\lo/\eta} > e^{-1} \} \le e\, g_{1/\eta}(x).
$$
The right-hand side converges to $0$, when $\eta \to 0$, uniformly on compact subsets of $\lo$, which completes the proof.
\end{proof}
\begin{remark}\label{rem:a2'}
Assumption \ref{ass:cont} holds if the process $(X(t))$ satisfies the following continuity condition:
\begin{assumption}
\item[(A2')] \label{ass:a2'}
for any $\ve>0$
$$
\lim_{t \to 0} \prob^x\big\{\sup_{s\in[0,t]}\rho(x,X(s)) \geq \ve \big\} = 0
$$
uniformly in $x$ from compact sets.
 \end{assumption}
 Such assumption is satisfied for a wide variety of Markov processes which are solutions to the stochastic differential equations with Levy noise with bounded coefficients. To prove this we simply use the Doob's maximal inequality to the martingale terms in the stochastic differential equation (see e.g. Theorem 1.3.8(iv) in \cite{KS}).
\end{remark}

Let for $h>0$
\begin{equation}
w_h(s,x)=\ee^{x}\Big\{ \int_0^h e^{-\alpha u} f\big(s+u,X(u)\big) du + e^{-\alpha h}w(s+h,X(h))\Big\}.
\end{equation}

\begin{thm}\label{thm:cont_w_in _O}
Under \ref{ass:cont} and \ref{ass:strong_feller}, the function $w$ is continuous on $[0, \infty) \times \lo$. Assume additionally (A1). The penalized functions $w^\beta$ are continuous and converge to $w$ uniformly on compact subsets of $[0, \infty) \times \lo$. An $\ve$-optimal stopping time is given by
$$
\tau^\ve(s) = \inf \{ t \ge 0: w(s+t,X(t)) \le G(s+t,X(t))+\ve \text{\ \ or\ \ } X(t)\notin \lo \}.
$$
\end{thm}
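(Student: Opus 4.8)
The plan is to establish the three assertions in turn: first the continuity of $w$ on $[0,\infty)\times\lo$ under (A2) and (A3); then, adding (A1), the continuity of $w^\beta$ and the locally uniform convergence $w^\beta\to w$; and finally the $\ve$-optimality of $\tau^\ve(s)$. Since $w$ is already lowersemicontinuous (Corollary \ref{cor:lower_cont_w}), the first assertion reduces to uppersemicontinuity on $\lo$, and for this I would exploit the auxiliary functions $w_h$. By Lemma \ref{lem:rightcont} the map $s\mapsto w(s,x)$ is continuous uniformly in $x$ on compact sets, so the bounded measurable function $w$ is an admissible integrand in Lemma \ref{lem:a3'}; together with (A3) this yields that $(s,x)\mapsto\ee^x\{e^{-\alpha h}w(s+h,X(h))\}$ is continuous, while the running-cost term $\ee^x\{\int_0^h e^{-\alpha u}f(s+u,X(u))\,du\}$ is continuous by Fubini's theorem and Lemma \ref{lem:a3'} applied at each time $u\in(0,h]$. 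Hence each $w_h$ is continuous on $[0,\infty)\times E$, and it then suffices to show $w_h\to w$ uniformly on compact subsets of $[0,\infty)\times\lo$, because a locally uniform limit of continuous functions is continuous.

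The heart of the matter is the uniform estimate $0\le w-w_h\le\epsilon(h)$ on a compact $C\subseteq[0,\infty)\times\lo$ with $\epsilon(h)\to0$. For the lower bound I would note that delaying until $h\wedge\tau_\lo$ and then continuing optimally is a feasible strategy, so its value $\hat w_h$ satisfies $\hat w_h\le w$, while $|\hat w_h-w_h|\le C\,\prob^x\{\tau_\lo\le h\}$, which tends to $0$ uniformly on compacts of $\lo$ by (A2). For the upper bound I would fix $(s,x)\in C$, take a $\delta$-optimal $\tau$, and split $J(s,x,\tau)$ over the events $\{\tau_\lo\le h\}$, $\{\tau_\lo>h,\ \tau\ge h\}$ and $\{\tau_\lo>h,\ \tau<h\}$. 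The first contributes at most $C\,\prob^x\{\tau_\lo\le h\}$; on the second the Markov property bounds the post-$h$ payoff by $e^{-\alpha h}w(s+h,X(h))$, reproducing the corresponding part of $w_h$; the decisive third event is where the near-optimal rule stops early. There I would use $G\le w$ together with the short-time continuity of the process (as in \eqref{eqn:short_time_cont}) and the continuity of $f$ and $G$: on the high-probability event that $X$ stays near $x$ throughout $[0,h]$ one has $G(s+\tau,X(\tau))\le G(s,x)+\ve\le w(s+h,X(h))+2\ve$, so the early-stopping payoff is dominated by the $w_h$-integrand up to $O(h)+O(\ve)$. Combining the three estimates gives $w(s,x)\le w_h(s,x)+\epsilon(h)+\delta$ uniformly on $C$, and since $\delta$ is arbitrary the uniform convergence follows, so $w$ is continuous on $[0,\infty)\times\lo$. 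I expect this interior estimate on the early-stopping event to be the main obstacle, precisely because the naive supermartingale bound points the wrong way and must be replaced by the inequality $G\le w$.

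Granting (A1), Corollary \ref{cor:cont_wbeta} gives continuity of each $w^\beta$, and Proposition \ref{prop:pointwise} gives $w^\beta\uparrow w$ pointwise. Since $w$ is now known to be continuous on $[0,\infty)\times\lo$, Dini's theorem applied on each compact subset of $[0,\infty)\times\lo$ upgrades this monotone convergence to uniform convergence there.

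For the $\ve$-optimality of $\tau^\ve(s)$ I would follow the derivation of \eqref{eqn:Belstopped} in the proof of Theorem \ref{thm:cont_for_upjump}, keeping $\ve>0$ fixed rather than sending it to $0$. Using \eqref{eqn:def_w_beta}, the Markov property at $\sigma^*=\tau_\beta(s)\wedge\tau^\ve(s)\wedge\tau_\lo\wedge T$ (with $\tau_\beta$ from \eqref{eqn:betastop_formula}), and the vanishing of the penalty term before $\tau_\beta$, I would let $\beta\to\infty$, then $\delta\to0$ and $T\to\infty$ to obtain
\begin{equation*}
w(s,x)\le\ee^x\Big\{\int_0^{\tau^\ve(s)\wedge\tau_\lo}e^{-\alpha u}f(s+u,X(u))\,du+e^{-\alpha(\tau^\ve(s)\wedge\tau_\lo)}w\big(s+\tau^\ve(s)\wedge\tau_\lo,X(\tau^\ve(s)\wedge\tau_\lo)\big)\Big\}.
\end{equation*}
The delicate step is the passage $\beta\to\infty$, namely $\prob^x\{\tau_\beta(s)\wedge T<\tau^\ve(s)\wedge T,\ A_\delta\}\to0$: because $w^\beta\to w$ is only uniform away from $\partial\lo$, I would argue that on this event $X(\tau_\beta(s))$ cannot approach the boundary, since near $\partial\lo$ either $w-G<\ve$ (where $G\vee H=G$, so $\tau^\ve$ would already have triggered, by the uniform boundary limit of Corollary \ref{cor:limbound}) or $w-w^\beta<\ve$ (where $G\vee H=H$, by the uniform migration bound $w^\beta\ge H-\ve$ of Proposition \ref{prop1} combined with $w\le H+\ve$ from Corollary \ref{cor:limbound}); hence $X(\tau_\beta(s))$ remains in a compact subset of $\lo$ where uniform convergence applies. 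Finally, on $\{\tau^\ve(s)<\tau_\lo\}$ the definition of $\tau^\ve(s)$ gives $w\le G+\ve$ at the stopping point, while on $\{\tau^\ve(s)\ge\tau_\lo\}$ the process has left $\lo$ and $w=H$ at $X(\tau_\lo)$; substituting these into the displayed inequality yields $w(s,x)\le J(s,x,\tau^\ve(s))+\ve$, and since $w(s,x)\ge J(s,x,\tau^\ve(s))$ always, $\tau^\ve(s)$ is $\ve$-optimal.
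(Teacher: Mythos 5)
Your proposal follows, in outline, the same route as the paper: continuity of each $w_h$ from Lemmas \ref{lem:a3'} and \ref{lem:rightcont}, locally uniform convergence $w_h\to w$ on $[0,\infty)\times\lo$ driven by (A2), then Dini's theorem with Corollary \ref{cor:cont_wbeta} and Proposition \ref{prop:pointwise} for the assertions about $w^\beta$, and finally the derivation of \eqref{eqn:Belstopped} with $\ve$ held fixed for the $\ve$-optimality. The one step where you genuinely diverge is also the one imprecise step: the early-stopping event $\{\tau_\lo>h,\ \tau<h\}$. The paper reduces its contribution to $\sup_{\tau \le h} \ee^x\{G(s+\tau, X(\tau))\}- \ee^x \{G(s+h, X(h))\}$ and obtains uniform smallness from the continuity of $v_h(s,x)=\sup_{\tau\le h}\ee^x\{G(s+\tau,X(\tau))\}$ (a weak Feller fact, \cite[Corollary 3.6]{palczewski2008}), its monotonicity in $h$, and Dini's theorem, so that no pathwise control of the process is ever needed. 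You instead work on ``the high-probability event that $X$ stays near $x$ throughout $[0,h]$'', citing \eqref{eqn:short_time_cont}. But \eqref{eqn:short_time_cont} is a fixed-time estimate, and by itself it does not bound the probability of the path event $\sup_{t\le h}\rho\big(x,X(t)\big)\ge\delta$; since $\tau$ is an arbitrary stopping time in $[0,h]$, fixed-time control of $X(\tau)$ is exactly what is not available. This is repairable with tools already in the paper---either upgrade the fixed-time bound to a supremum bound by an Ottaviani-type strong Markov argument, or avoid the path event altogether by conditioning at $\tau$ and comparing $X(\tau)$ with $X(h)$ via the strong Markov property, exactly as done in the proof of Proposition \ref{prop:boundary_downjump}---but as cited the step is not justified.

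In the $\ve$-optimality part you are more careful than the paper. The paper disposes of $\prob^x\{\tau_\beta(s)\wedge T<\tau^\ve(s)\wedge T,\ A_\delta\}\to0$ by citing uniform convergence of $w^\beta$ to $w$ on compacts, but in this theorem that convergence holds only on compact subsets of $[0,\infty)\times\lo$, while $X(\tau_\beta(s))$ a priori lies in $K_\delta\cap\lo$, which need not be compactly contained in $\lo$. Your dichotomy near $\partial\lo$---where $G\ge H$ the rule $\tau^\ve$ has already triggered, by Corollary \ref{cor:limbound} and continuity of $G$; where $H\ge G$ one has $w-w^\beta<\ve$ uniformly in $\beta$, by Proposition \ref{prop1} combined with Corollary \ref{cor:limbound}---is precisely what confines $X(\tau_\beta(s))$ to a compact subset of $\lo$ and makes the paper's terse citation legitimate; this is a genuine strengthening. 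One minor slip at the start: you invoke Corollary \ref{cor:lower_cont_w} for lower semicontinuity of $w$, but that corollary assumes (A1), which is not available for the first assertion; fortunately your argument never actually uses this reduction, since the locally uniform limit of the continuous functions $w_h$ yields continuity outright.
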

\begin{proof}
By Lemmas \ref{lem:a3'} and \ref{lem:rightcont} the function $w_h$ is continuous in $(s,x)$. Let $\tau_\lo^h = \inf\{t\geq h: X(t) \notin \lo\}$ and
\begin{multline*}
J^h(s, x, \tau) = \ee^x\Big\{\int_0^{\tau \wedge \tau_\lo^h} e^{-\alpha u} f\big(s+u,X(u)\big) du\\
+ \ind{\tau<\tau_\lo^h}e^{-\alpha \tau} G\big(s+\tau,X(\tau)\big) +\ind{\tau\geq \tau_\lo^h} e^{-\alpha \tau_\lo^h} H\big(s + \tau_\lo, X(\tau_\lo^h)\big)\Big\}.
\end{multline*}
By Theorem 3b of \cite{mertens} applied to the Markov process consisting of a pair $(s+t,X(t))$ we have
$$
w_h(s,x) = \sup_{\tau\geq h}J^h(s,x,\tau).
$$
Consider an auxiliary value function
$$
\tilde{w}_h(s,x)=\sup_{\tau\geq h} J(s, x, \tau).
$$
We have the following inequalities
\begin{equation}\label{oszac1}
|w_h(s,x)-\tilde{w}_h(s,x)|\le C\, \prob^x\big\{\tau_\lo<h \big\}
\end{equation}
and
\begin{equation}\label{oszac2}
0 \le w(s,x)-\tilde{w}_h(s,x) \leq \sup_{\tau} \big\{ J(s, x, \tau)-J(s,x,\tau_h) \big\} =: I_h(s, x),
\end{equation}
where $\tau_h=\tau \vee h$ and $C>0$. Assumption \ref{ass:cont} implies the difference $|w_h - \tilde w_h|$ converges to $0$ as $h \to 0$ uniformly on compact subsets of $[0, \infty) \times \lo$. The proof of uniform convergence of $I_h$ is more involved. First notice
\begin{align*}
I_h(s, x) &= \sup_{\tau \le h} \big\{ J(s, x, \tau)-J(s,x,h) \big\} \\
&\le \|f\| h + 2(\|G\| + \|H\|)\, \prob^x \{\tau_\lo < h \} + \sup_{\tau \le h} \ee^x\{G(s+\tau, X(\tau))\}- \ee^x \{G(s+h, X(h))\}.
\end{align*}
It suffices to prove that as $h \to 0$
\begin{equation}\label{eqn:proof_Ih}
\sup_{\tau \le h} \ee^x\{G(s+\tau, X(\tau))\} - \ee^x \{G(s+h, X(h))\} \to 0
\end{equation}
uniformly in $s,x$ in compact subsets of $[0, \infty) \times \lo$. By Proposition \ref{cor:03} we have
$$
\lim_{h \to 0} \ee^x \{G(s+h, X(h))\} = G(s,x)
$$
uniformly in $s,x$ in compact subsets. Let $v_h(s,x) = \sup_{\tau \le h} \ee^x\{G(s+\tau, X(\tau))\}$. By weak Feller property this function is continuous (see, e.g., \cite[Corollary 3.6]{palczewski2008} or \cite{Z}). By dominated convergence theorem and the right-continuity of trajectories of $X$ we have $\lim_{h \to 0} v_h(s,x) = G(s,x)$. Since this convergence is monotone and functions $v_h$ and $G$ are continuous Dini's theorem implies that $v_h$ tends to $G$ uniformly on compact sets. This completes the proof of \eqref{eqn:proof_Ih}. Consequently, $w_h(s,x)$ converges to $w(s,x)$ as $h\to 0$ uniformly in compact subsets of $(s,x) \in [0, \infty) \times \lo$ and $w$ is continuous on $[0, \infty) \times \lo$.

Assume (A1). By Corollary \ref{cor:cont_wbeta} functions $w^\beta$ are continuous. Dini's Theorem and Proposition \ref{prop:pointwise} imply their uniform convergence to $w$ on compact sets. In an identical way as in Theorem \ref{thm:cont_for_upjump} we prove that $\tau^\ve(s)$ is well-defined and $\ve$-optimal (this last assertion follows directly from \eqref{eqn:Belstopped}).
\end{proof}

Theorem \ref{thm:cont_w_in _O} states the continuity of $w$ in $[0,\infty) \times \lo$. It is also clear that $w$ is continuous on $[0, \infty) \times \lo^c$ because on this set $w$ coincides with $H$. However, if there is a downward jump on the boundary of $\lo$ ($G(s,x) > H(s,x)$ for some $x \in \partial \lo$) the function $w$ has a discontinuity in this point. This follows from the observation that $w \ge G$ on the set $[0, \infty) \times \lo$ and $w = H$ on $[0, \infty) \times \lo^c$. Therefore, the statement of the above theorem cannot be strengthened. This also implies that an optimal stopping time might not exist as the following example shows.

\begin{example}
Let $E = \er$ and $X(t)$ be a Brownian motion. Take $\lo = (-\infty, 1)$ and $\alpha < 1/2$. It is easy to see that assumptions \ref{ass:a1}-\ref{ass:strong_feller} are satisfied. Put $G(s,x) = \min(e^x, e)$ and $H(s,x) = f(s,x) = 0$. Notice that these functions do not depend on $s$, which implies that the value function is also time-independent. We shall, therefore, skip $s$ in the notation.
\begin{lemma}\label{lem:example}
In the setting of the example,
\begin{enumerate}
\item for $x < 1$ and $t \ge 0$ we have
\begin{equation}\label{eqn:computed_example}
l(t, x) := \ee^x\big\{e^{-\alpha t} \ind{X(t) < 1} e^{X(t)} \big\} = e^{(\frac12 - \alpha) t + x} \Phi \Big(\frac{1-x-t}{\sqrt{t}}\Big),
\end{equation}
where $\Phi$ is the standard normal cumulative distribution function,
\item $\displaystyle w(x) \ge l(t, x)$, for $x < 1$ and $t \ge 0$,
\item $w(x) > G(x)$, for $x < 1$.
\end{enumerate}
\end{lemma}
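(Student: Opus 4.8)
The plan is to prove the three assertions in turn, with (2) as the crux. Throughout recall that here $\tau_\lo=\inf\{t:X(t)\ge1\}$ is the hitting time of the level $1$, that $H\equiv f\equiv0$, and that $G(y)=e^{y}$ on $\lo=(-\infty,1)$. For part (1), under $\prob^x$ the variable $X(t)$ is Gaussian with mean $x$ and variance $t$, so
\[
l(t,x)=e^{-\alpha t}\int_{-\infty}^{1}e^{y}\,\frac{1}{\sqrt{2\pi t}}\,e^{-(y-x)^2/(2t)}\,dy .
\]
Completing the square through the identity $y-\frac{(y-x)^2}{2t}=x+\frac t2-\frac{(y-(x+t))^2}{2t}$ pulls out the factor $e^{(\frac12-\alpha)t+x}$ and leaves $\prob\{N(x+t,t)<1\}=\Phi\big(\frac{1-x-t}{\sqrt t}\big)$, which is exactly \eqref{eqn:computed_example}. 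This step is a routine Gaussian computation.

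For part (2) the difficulty is that the obvious admissible choice $\tau=t$ falls short of $l$. Indeed, stopping at the deterministic time $t$ yields $\ee^x\{\ind{t<\tau_\lo}e^{-\alpha t}e^{X(t)}\}$, in which only paths that have never reached the level $1$ contribute; by contrast $l(t,x)$ credits every path with $X(t)<1$, including those that crossed $1$ and later returned below it. Because the boundary pays $H=0$, this returning mass is forfeited by $\tau=t$, so the gap must be recovered by a genuinely different strategy.

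To recover it I would stop near the boundary. For $a\in(x,1)$ put $\tau_a=\inf\{t:X(t)=a\}$ and use the admissible time $\tau_a\wedge t$, which keeps the process inside $\lo$ and hence always collects $G$:
\[
J(x,\tau_a\wedge t)=e^{a}\,\ee^x\{\ind{\tau_a<t}\,e^{-\alpha\tau_a}\}+e^{-\alpha t}\,\ee^x\{\ind{\tau_a>t}\,e^{X(t)}\}.
\]
Sending $a\uparrow1$ (dominated convergence, since $\tau_a\uparrow\tau_\lo$ and the integrands are bounded) gives the lower bound $w(x)\ge \ee^x\{\ind{t<\tau_\lo}e^{-\alpha t}e^{X(t)}\}+e\,\ee^x\{\ind{\tau_\lo\le t}e^{-\alpha\tau_\lo}\}$. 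On the other side, splitting $l$ according to $\{\tau_\lo>t\}$ and $\{\tau_\lo\le t\}$ and applying the strong Markov property at $\tau_\lo$ — where $X=1$ and the post-$\tau_\lo$ motion is a fresh Brownian motion with $\ee^1\{\ind{X(u)<1}e^{X(u)}\}=e\,\psi(u)$ — rewrites the first piece as $\ee^x\{\ind{t<\tau_\lo}e^{-\alpha t}e^{X(t)}\}$ and the returning-excursion piece as $e^{1-\alpha t}\,\ee^x\{\ind{\tau_\lo\le t}\,\psi(t-\tau_\lo)\}$, with $\psi(u)=\int_0^\infty e^{-\sqrt u\,z}\phi(z)\,dz$ and $\phi$ the standard normal density. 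The main obstacle is the comparison of these two expressions; it is settled term by term by the elementary bound $\psi(u)\le\psi(0)=\tfrac12<1$: on $\{\tau_\lo\le t\}$ one has $e\,e^{-\alpha\tau_\lo}=e^{1-\alpha\tau_\lo}\ge e^{1-\alpha t}>e^{1-\alpha t}\psi(t-\tau_\lo)$, so the near-boundary credit dominates the excursion credit and $w(x)\ge l(t,x)$.

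For part (3) I would fix $x<1$, set $a=1-x>0$, and use \eqref{eqn:computed_example} in the form $l(t,x)/e^{x}=e^{(\frac12-\alpha)t}\Phi\big(\frac{a-t}{\sqrt t}\big)$. As $t\downarrow0$ the argument $\frac{a-t}{\sqrt t}\to+\infty$, so $1-\Phi\big(\frac{a-t}{\sqrt t}\big)$ is of order $e^{-a^2/(2t)}$ and is negligible against $e^{(\frac12-\alpha)t}-1\sim(\frac12-\alpha)t$; since $\alpha<\frac12$ this gives $l(t,x)>e^{x}=G(x)$ for all sufficiently small $t>0$. Combined with part (2), $w(x)\ge l(t,x)>G(x)$. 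As a consistency check, solving $\tfrac12 w''=\alpha w$ on $\lo$ with $w(-\infty)=0$ and the boundary value $w(1-)=G\vee H(1)=e$ supplied by Theorem \ref{thm:limit} yields the closed form $w(x)=e^{1-\sqrt{2\alpha}(1-x)}$, which exceeds $e^{x}$ precisely because $\sqrt{2\alpha}<1$.
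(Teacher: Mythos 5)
Your proof is correct and follows essentially the same route as the paper: part (1) is the same Gaussian computation, part (2) uses the stopping times $\tau_a \wedge t$ with $a \uparrow 1$ exactly as the paper's $\tau_n = \inf\{t: X(t) \ge 1-1/n\}$, and part (3) is the small-$t$ expansion equivalent to the paper's observation that $G(x) = l(0,x)$ and $\frac{\partial}{\partial t} l(0,x) > 0$. The only genuine difference is that you spell out the comparison $\lim_{a \to 1} J(x, \tau_a \wedge t) \ge l(t,x)$, which the paper merely asserts, via the strong Markov property and the function $\psi$; note this step can be shortened, since on $\{\tau_\lo \le t\}$ one has the direct pointwise bound $e^{-\alpha t} \ind{X(t)<1} e^{X(t)} \le e\, e^{-\alpha \tau_\lo}$, making the excursion decomposition unnecessary.
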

\begin{proof}[Sketch of the proof]
The formula \eqref{eqn:computed_example} can be calculated directly using the normality of $X(t)$. To prove (2), define a sequence of stopping times $\tau_n = \inf \{t: X(t) \ge 1 - 1/n\}$. Clearly,
$$
w(x) \ge \ee^x \big\{ e^{-\alpha (\tau_n \wedge t)} e^{X(\tau_n \wedge t)} \big\}
$$
and
$$
\lim_{n \to \infty} \ee^x \big\{ e^{-\alpha (\tau_n \wedge t)} e^{X(\tau_n \wedge t)} \big\} \ge l(t, x).
$$
The proof of the last assertion rests on the observation that $G(x) = l(0, x)$ and $\frac{\partial}{\partial t} l(0,x) > 0$ for $x < 1$.
\end{proof}

Assume that there exists an optimal stopping moment $\tau^*$ for some $x^* < 1$, i.e., $w(x^*) = \ee^{x^*} \{ e^{-\alpha \tau^*} \ind{\tau^* < \tau_\lo} G(X(\tau^*)) \}$. From the strong Markov property of the process $X(t)$ we infer that $G(X(\tau^*)) = w(X(\tau^*))$, $\prob^{x^*}$-a.s. on $\{\tau^* < \tau_\lo\}$. Since $w(x^*) \ge e^{x^*}$ we have $\prob^{x^*} (\tau^* < \tau_\lo) > 0$. This is a contradiction with assertion (3) of Lemma \ref{lem:example}.
\end{example}

\begin{remark}
Penalty method offers a numerical procedure for solution of optimal stopping problems. Lemma \ref{lem:wbeta_form_tau} provides an estimate of the error: $\|w - w^\beta\| \le \|(G - w^\beta)^+\|$. This error decreases as $\beta$ increases: by Proposition \ref{prop:pointwise} $w^\beta$ forms a non-decreasing sequence of functions converging to $w$. Under \ref{ass:a1} functions $w^\beta$ are continuous (c.f. Corollary \ref{cor:cont_wbeta}). Theorems \ref{thm:cont_for_upjump} and \ref{thm:cont_w_in _O} state assumptions under which $w$ is continuous and is approximated by $w^\beta$ uniformly on compact sets.  The continuity of $w$ and $w_\beta$ implies that state space discretization methods can be safely applied. Following Lemma \ref{lem:solv_w_beta} function $w^\beta$ can be computed as a fixed point of a contraction operator $\mathcal T$ given by
\begin{multline*}
\mathcal T \phi(s,x) = \ee^x\Big\{\int_0^{\tau_\lo} e^{-(\alpha+\beta) u} \Big[ f\big(s+u,X(u)\big) + \beta \Big( G\big(s+u,X(u)\big) - \phi\big(s+u,X(u)\big) \Big)^+\\
+\beta \phi\big(s+u,X(u)\big) \Big] du +e^{-(\alpha+\beta) \tau_\lo} H\big(s + \tau_\lo, X(\tau_\lo)\big)\Big\}
\end{multline*}
for a bounded measurable function $\phi$. This operator can be implemented via PDE or Kushner-Dupuis space-time discretization approach (see \cite{KD}). The fixed point is approximated by an iterative procedure with an exponential decrease of the error (due to the contraction property of $\mathcal T$).
\end{remark}

\section{Sufficient conditions for (A1)}\label{sec:conditions_for_A1}
Define for $\eta > 0$
$$
h_{\eta}(x) = \prob^x \{ \tau_\lo > \eta\}.
$$
Consider the following assumption:
\begin{assumption}
\item[(A4)] \label{ass:h_boundary}
$$ \displaystyle \lim_{x \to \partial \lo,\ x \in \lo} h_\eta(x) = 0.$$
\end{assumption}
This assumption ensures that when approaching the boundary of $\lo$ the probability of crossing it in a short time converges to $1$. It is clearly satisfied (by Chebyshev inequality) whenever the mapping $x \to \ee^x \{\tau_\lo\}$ is continuous.  It can be viewed as a complementary assumption to \ref{ass:cont}. We will show that \ref{ass:cont}-\ref{ass:h_boundary} imply (A1) and (A1) is sufficient for \ref{ass:h_boundary}.

\begin{lemma}\label{lem:cont_h_eta_a2_a4}
The function $h_\eta$ is continuous on $E$ under assumptions \ref{ass:cont}-\ref{ass:h_boundary}.
\end{lemma}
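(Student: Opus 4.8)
The plan is to decompose $E$ into three regions---the open exterior $E\setminus\bar\lo$, the open set $\lo$, and the boundary $\partial\lo$ where the two glue together---and to check continuity of $h_\eta$ separately on each. The starting observation is that for $x\notin\lo$ we have $X(0)=x\notin\lo$, hence $\tau_\lo=0<\eta$ and $h_\eta(x)=0$. Thus $h_\eta$ vanishes identically on $\lo^c$ and is, in particular, trivially continuous on the open exterior $E\setminus\bar\lo$. It remains to treat the interior and the boundary.

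For continuity inside $\lo$ I would use the strong Feller property \ref{ass:strong_feller} to smooth away the path-dependence of the event $\{\tau_\lo>\eta\}$. Fix $x\in\lo$ and $\delta\in(0,\eta)$. Conditioning at time $\delta$ and invoking the Markov property yields the identity
$$
h_\eta(x)=\ee^x\big\{\ind{\tau_\lo>\delta}\,h_{\eta-\delta}(X(\delta))\big\},
$$
since on $\{\tau_\lo>\delta\}$ the residual event of staying in $\lo$ throughout $[\delta,\eta]$ has conditional probability $h_{\eta-\delta}(X(\delta))$. Dropping the indicator introduces an error controlled by $0\le h_{\eta-\delta}\le 1$:
$$
\big|h_\eta(x)-\ee^x\{h_{\eta-\delta}(X(\delta))\}\big|
=\ee^x\big\{\ind{\tau_\lo\le\delta}\,h_{\eta-\delta}(X(\delta))\big\}
\le \prob^x\{\tau_\lo\le\delta\}.
$$
Now $x\mapsto\ee^x\{h_{\eta-\delta}(X(\delta))\}$ is continuous on $E$ by \ref{ass:strong_feller} with $t=\delta>0$, because $h_{\eta-\delta}$ is bounded and measurable; and by \ref{ass:cont} (using $\prob^x\{\tau_\lo\le\delta\}\le\prob^x\{\tau_\lo<2\delta\}$ to pass from the non-strict to the strict inequality) the right-hand side tends to $0$ as $\delta\to0$ uniformly on compact subsets of $\lo$. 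Hence on every compact subset of $\lo$ the function $h_\eta$ is a uniform limit of continuous functions, so it is continuous on $\lo$, each point of the open set $\lo$ possessing a compact neighbourhood contained in $\lo$.

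Continuity across the boundary is precisely where \ref{ass:h_boundary} enters. For $x_0\in\partial\lo$ we have $h_\eta(x_0)=0$; given any sequence $y_n\to x_0$, I would split off the terms with $y_n\in\lo^c$ (where $h_\eta(y_n)=0$) from those with $y_n\in\lo$ (where $h_\eta(y_n)\to0$ by \ref{ass:h_boundary}). Both contributions converge to $0=h_\eta(x_0)$, which gives continuity at $x_0$ and completes the argument on all of $E$.

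I expect the only delicate point to be the Markov-property identity and the clean separation of the path-dependent indicator $\ind{\tau_\lo>\delta}$ from the terminal position $X(\delta)$: one must verify the conditional-probability computation carefully and confirm that $h_{\eta-\delta}$ is bounded and measurable so that \ref{ass:strong_feller} genuinely applies to it. By contrast, the exterior region is immediate, and the boundary gluing via \ref{ass:h_boundary} is routine once the interior continuity is established.
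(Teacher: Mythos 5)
Your proof is correct and follows essentially the same route as the paper: the paper likewise introduces $r_\delta(x) = \ee^x\{h_{\eta-\delta}(X(\delta))\}$, which is continuous by \ref{ass:strong_feller}, bounds $0 \le r_\delta - h_\eta \le \prob^x\{\tau_\lo \le \delta\}$ and invokes \ref{ass:cont} for uniform convergence on compact subsets of $\lo$, then glues the interior piece to the identically-zero exterior piece at $\partial\lo$ via \ref{ass:h_boundary}. Your additional care with the Markov-property identity and the strict-versus-non-strict inequality ($\prob^x\{\tau_\lo\le\delta\}\le\prob^x\{\tau_\lo<2\delta\}$) only makes the argument more explicit than the paper's.
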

\begin{proof}
For $\delta \in (0, \eta)$ define $r_\delta(x) = \ee^x \{ h_{\eta - \delta}(X(\delta))\}$. This function is continuous by \ref{ass:strong_feller}. The difference between $r_\delta$ and $h_\eta$ can be bounded in the following way:
$$
0 \le r_{\delta}(x) - h_\eta(x) \le \prob^x \{ \tau_\lo < \delta \}.
$$
Assumption \ref{ass:cont} states that the right-hand side of the above inequality converges to $0$ as $\delta \to 0$ uniformly in $x$ from compact subsets of $\lo$. Hence, $h_\eta$ is continuous in $\lo$. It is identically zero on $E \setminus \lo$. These two pieces fit continuously at the boundary of $\lo$ because, due to \ref{ass:h_boundary}, $h_\eta(x)$ converges to $0$ as $x$ approaches the boundary of $\lo$.
\end{proof}

The continuity of $h_\eta$ implies uniformity of the limit in assumption \ref{ass:h_boundary}, which is formalized in the following corollary.
\begin{cor}\label{cor:uniform_conv_h_eta}
If $h_\eta$ is continuous then for any compact set $L \subseteq E$ and constants $\eta, \ve > 0$ there is an open set $L_{\eta, \ve}$ such that $ \overline L_{\eta, \ve} \subset \lo$ and
for each $x \in L \setminus L_{\eta, \ve}$ we have $\prob^x (\tau_\lo > \eta) \le \ve$.
\end{cor}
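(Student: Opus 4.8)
The plan is to exploit the continuity of $h_\eta$ together with the elementary fact that $h_\eta$ vanishes off $\lo$: if $x\notin\lo$ then $\tau_\lo=0$, so $\prob^x(\tau_\lo>\eta)=0$. Consequently the super-level set $\{x\in E:h_\eta(x)\ge\ve\}$ is closed (by continuity of $h_\eta$) and is contained in $\lo$. Since only the behaviour of $h_\eta$ on the compact set $L$ matters, I would first localize and put
$$
C=\{x\in L:h_\eta(x)\ge\ve\},
$$
which, as the intersection of the compact set $L$ with a closed set, is compact, and still satisfies $C\subseteq\lo$. The empty case $C=\emptyset$ (that is, $h_\eta<\ve$ throughout $L$) is trivial: one takes $L_{\eta,\ve}=\emptyset$.

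The key geometric step is to separate $C$ from $E\setminus\lo$. Because $C$ is compact, $E\setminus\lo$ is closed, and the two sets are disjoint, the $1$-Lipschitz map $x\mapsto\mathrm{dist}(x,E\setminus\lo)$ is continuous and strictly positive on $C$, hence attains a positive minimum $2d>0$ there. (If $\lo=E$ this separation is vacuous and any $d>0$ works.) I would then define $L_{\eta,\ve}$ to be the open $d$-neighbourhood of $C$,
$$
L_{\eta,\ve}=\{x\in E:\mathrm{dist}(x,C)<d\},
$$
which is open and contains $C$.

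It then remains to check the two required properties. First, any $x$ with $\mathrm{dist}(x,C)\le d$ satisfies, by the triangle inequality, $\mathrm{dist}(x,E\setminus\lo)\ge 2d-d=d>0$, so $x\in\lo$; therefore $\overline L_{\eta,\ve}\subseteq\{x:\mathrm{dist}(x,C)\le d\}\subset\lo$, and since $C$ is bounded this closure is in fact compact. Second, if $x\in L\setminus L_{\eta,\ve}$ then $x\notin C$ (because $C\subseteq L_{\eta,\ve}$), and as $x\in L$ the definition of $C$ forces $h_\eta(x)<\ve$, i.e. $\prob^x(\tau_\lo>\eta)\le\ve$, which is the desired conclusion.

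The only delicate point is securing the strict inclusion $\overline L_{\eta,\ve}\subset\lo$ rather than merely $L_{\eta,\ve}\subset\lo$: this is precisely what the strictly positive separation distance $2d$ buys, thickening by less than the full gap. It is also why both compactness of $C$ (gained by intersecting with the compact $L$, crucial when $\lo$ is unbounded) and the vanishing of $h_\eta$ outside $\lo$ are indispensable. Notably, no property of $h_\eta$ beyond the assumed continuity is used.
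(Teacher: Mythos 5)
Your proof is correct: the superlevel set $C=\{x\in L: h_\eta(x)\ge\ve\}$ is compact and contained in $\lo$ (since $h_\eta\equiv 0$ off $\lo$), and thickening it by half its distance to $E\setminus\lo$ yields an open set whose closure stays in $\lo$ while $h_\eta<\ve$ on $L\setminus L_{\eta,\ve}$. The paper states this corollary without any proof, treating it as an immediate consequence of the continuity of $h_\eta$, so your argument is precisely the standard elaboration of the reasoning the paper leaves implicit.
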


\begin{prop}\label{prop:2.2}
Under \ref{ass:cont}-\ref{ass:strong_feller} the mapping $x \mapsto P^{\tau_\lo}_t h(x)$ is continuous on $E \setminus \partial \lo$ for any bounded measurable function $h$ and $t>0$. If additionally \ref{ass:h_boundary} holds then $P^{\tau_\lo}_t$ maps the space of bounded measurable functions into the space of continuous bounded functions and as a result condition (A1) is satisfied.
\end{prop}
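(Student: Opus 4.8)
The plan is to split the domain $E\setminus\partial\lo = \lo \sqcup (E\setminus\bar\lo)$ into two open pieces and argue on each separately. On the open exterior $E\setminus\bar\lo$ we have $\tau_\lo=0$ $\prob^x$-a.s., so $P^{\tau_\lo}_t h\equiv 0$ there and continuity is trivial. All the substance is continuity on $\lo$, which I would obtain by the smoothing device already used in Lemma \ref{lem:cont_h_eta_a2_a4}: move the path-dependent constraint past a short initial time $\delta$ so that the strong Feller property \ref{ass:strong_feller} can act, and then control the resulting discrepancy uniformly via \ref{ass:cont}. Continuity across the boundary (the second assertion) I would then get from a crude bound by $h_\eta$ and \ref{ass:h_boundary}.

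Concretely, for $\delta\in(0,t)$ set
\[
r_\delta(x) = \ee^x\big\{ \big(P^{\tau_\lo}_{t-\delta} h\big)(X(\delta)) \big\}.
\]
Since $y\mapsto P^{\tau_\lo}_{t-\delta}h(y)$ is bounded and measurable, \ref{ass:strong_feller} makes each $r_\delta$ continuous on $E$. The key step is the comparison of $r_\delta$ with $P^{\tau_\lo}_t h$. Applying the Markov property at time $\delta$ together with the additivity of the exit time, $\tau_\lo=\delta+\tau_\lo\circ\theta_\delta$ on $\{\tau_\lo>\delta\}$, rewrites the above as $r_\delta(x)=\ee^x\{\ind{\tau_\lo\circ\theta_\delta>t-\delta}\,h(X(t))\}$. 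On $\{\tau_\lo>\delta\}$ the events $\{\tau_\lo\circ\theta_\delta>t-\delta\}$ and $\{\tau_\lo>t\}$ coincide, while on $\{\tau_\lo\le\delta\}$ we have $\ind{\tau_\lo>t}=0$ because $t>\delta$; hence the two indicators differ only on a subset of $\{\tau_\lo\le\delta\}$ and
\[
\big| r_\delta(x) - P^{\tau_\lo}_t h(x) \big| \le \|h\|\,\prob^x\{\tau_\lo\le\delta\}.
\]
By \ref{ass:cont} the right-hand side tends to $0$ as $\delta\to0$ uniformly on compact subsets of $\lo$, so $P^{\tau_\lo}_t h$ is a locally uniform limit of continuous functions and is therefore continuous on $\lo$. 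Combined with the vanishing on $E\setminus\bar\lo$, this proves the first assertion. This comparison, i.e.\ correctly shifting the uncontrollable event $\{\tau_\lo>t\}$ past $\delta$ and then dominating the error by $\prob^x\{\tau_\lo\le\delta\}$, is the one genuinely delicate point; everything else is routine.

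For the second assertion I would note that $P^{\tau_\lo}_t h\equiv 0$ on $\lo^c$, so it remains only to check that $P^{\tau_\lo}_t h(x)\to 0$ as $x\to x_0\in\partial\lo$ with $x\in\lo$. The estimate
\[
\big| P^{\tau_\lo}_t h(x) \big| = \big| \ee^x\{\ind{t<\tau_\lo}\,h(X(t))\} \big| \le \|h\|\,h_t(x),
\]
together with \ref{ass:h_boundary} applied with $\eta=t$, yields exactly this. Thus $P^{\tau_\lo}_t h$ is continuous on all of $E$, and being plainly bounded by $\|h\|$ it lies in the space of continuous bounded functions; since this holds for every bounded measurable $h$, the operator $P^{\tau_\lo}_t$ maps bounded measurable functions into continuous bounded functions, which in particular gives (A1).
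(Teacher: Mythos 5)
Your proposal is correct and follows essentially the same route as the paper's proof: your $r_\delta(x)=\ee^x\{(P^{\tau_\lo}_{t-\delta}h)(X(\delta))\}$ is exactly the paper's continuous comparison function $\ee^x\{\ee^{X(s)}\{\ind{t-s<\tau_\lo}h(X(t-s))\}\}$ with $s=\delta$, the error is dominated in both cases by $\|h\|\,\prob^x\{\tau_\lo\le\delta\}$ and killed uniformly on compacts of $\lo$ via \ref{ass:cont}, and the boundary fit is obtained identically from the bound $\|h\|\,h_t(x)$ and \ref{ass:h_boundary}. The only cosmetic difference is that the paper passes to $\prob^x\{\tau_\lo<2s\}$ to match the strict inequality in \ref{ass:cont}, a step your argument absorbs implicitly.
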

\begin{proof}
Let $h$ be a bounded measurable function. By the strong Feller property \ref{ass:strong_feller}, for $s<t$,  the mapping
$x \mapsto \ee^x\left\{\ee^{X(s)}\left\{\ind{t-s <\tau_\lo} h(X(t-s))\right\}\right\}$ is continuous. Furthermore,
\begin{equation}\label{est1}
\begin{aligned}
\ee^x \{ \ind{t < \tau_\lo} h(X(t))\} &=\ee^x\left\{ \ind{s <\tau_\lo}\ee^{X(s)}\left\{\ind{t-s <\tau_\lo} h(X(t-s))\right\}\right\} \\
& =\ee^x\left\{\ee^{X(s)}\left\{\ind{t-s <\tau_\lo} h(X(t-s))\right\}\right\}- \ee^x\left\{\ind{\tau_\lo \leq s} \ee^{X(s)}\left\{\ind{t-s <\tau_\lo} h(X(t-s))\right\}\right\}.
\end{aligned}
\end{equation}
Therefore
$$
\Big|\ee^x \{ \ind{t < \tau_\lo} h(X(t))\}-\ee^x\left\{\ee^{X(s)}\left\{\ind{t-s <\tau_\lo} h(X(t-s))\right\}\right\}\Big|
\le \|h\|\ \prob^x\{\tau_\lo \le s\} \le \|h\|\ \prob^x\{\tau_\lo < 2s\} \to 0
$$
uniformly on compact subsets of $\lo$ as $s\to 0$ by \ref{ass:cont}. This shows the continuity of $x \mapsto P^{\tau_\lo}_t h(x)$ for $x\in \lo$.
For $x$ in $E\setminus \lo$ we clearly have $\ee^x \{ \ind{t < \tau_\lo} h(X(t))\}=0$. Now we prove continuous fit at the boundary of $\lo$.  Assumption \ref{ass:h_boundary} implies that $|\ee^x \{ \ind{t < \tau_\lo} h(X(t))\}|\leq \prob^x\left\{ t < \tau_\lo\right\} \|h\|$ decreases to $0$ as $x$ approaches the boundary. Hence, $P^{\tau_\lo}_t h$ is continuous on $E$.
\end{proof}

Proposition \ref{prop:2.2} states that \ref{ass:cont}-\ref{ass:h_boundary} are sufficient for Assumption (A1). The following lemma shows that \ref{ass:a1} implies \ref{ass:h_boundary}. Recall that \ref{ass:a1} also implies \ref{ass:cont}, see Lemma \ref{lem:a2}.

\begin{lemma}\label{lem:a1_to_a5}
Under (A1) the function $h_\eta$ is continuous on $E$, which, in particular, implies \ref{ass:h_boundary}.
\end{lemma}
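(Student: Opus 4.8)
The plan is to identify $h_\eta$ with the stopped semigroup evaluated at the constant function and then to transfer the continuity granted by (A1) through a monotone approximation. Observe first that
\[
h_\eta(x)=\prob^x\{\tau_\lo>\eta\}=\ee^x\{\ind{\eta<\tau_\lo}\}=P^{\tau_\lo}_\eta \mathbf 1(x).
\]
The obstacle is that the constant function $\mathbf 1$ does not vanish at infinity, so (A1) cannot be applied to it directly; the whole point of the argument is to approximate $\mathbf 1$ from below by functions in $\mathcal C_0$ and to control the error uniformly on compact sets.

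Concretely, I would fix $x_0 \in E$ and set $h_n(x)=e^{-\rho(x,\overline B(x_0,n))}$, where $\rho(x,A)$ denotes the distance from $x$ to the set $A$. Each $h_n$ is continuous, bounded, vanishes at infinity and satisfies $0\le h_n\le 1$, $h_n\equiv 1$ on $\overline B(x_0,n)$, and $h_n\uparrow 1$ pointwise. By (A1) every function $P^{\tau_\lo}_\eta h_n$ is continuous. Since $1-h_n(y)\le \ind{y\notin \overline B(x_0,n)}$, the error is controlled by
\[
0\le h_\eta(x)-P^{\tau_\lo}_\eta h_n(x)=\ee^x\big\{\ind{\eta<\tau_\lo}(1-h_n)(X(\eta))\big\}\le \prob^x\{X(\eta)\notin \overline B(x_0,n)\}.
\]

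The key (and only delicate) step is to upgrade this pointwise monotone convergence to convergence uniform on compact sets: Dini's theorem is unavailable here because continuity of the limit $h_\eta$ is exactly what we are trying to prove, so such an argument would be circular. Instead I would invoke Proposition \ref{prop:01}: given a compact $K\subseteq E$ and $\ve>0$, there is a compact $L$ with $\sup_{x\in K}\prob^x\{X(\eta)\notin L\}\le\ve$, and for $n$ large enough $L\subseteq \overline B(x_0,n)$, whence $\sup_{x\in K}\big(h_\eta(x)-P^{\tau_\lo}_\eta h_n(x)\big)\le\ve$. Thus $h_\eta$ is a uniform-on-compacts limit of continuous functions and is therefore continuous on $E$, since local compactness of $E$ lets continuity on compact neighbourhoods propagate to every point. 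Finally, \ref{ass:h_boundary} follows at once: for $x\in\partial\lo$ we have $x\notin\lo$, so $\tau_\lo=0$ $\prob^x$-a.s. and $h_\eta(x)=0$; continuity of $h_\eta$ then gives $\lim_{y\to x,\,y\in\lo}h_\eta(y)=h_\eta(x)=0$, which is precisely \ref{ass:h_boundary}.
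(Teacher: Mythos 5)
Your argument is correct, but the ``obstacle'' you identify at the outset is a misreading of the hypothesis: as stated in the paper, (A1) requires $P^{\tau_\lo}_t$ to map the space of continuous \emph{bounded} functions into itself --- not $\mathcal{C}_0$ --- so the constant function $\mathbf{1}$ is directly admissible, and the paper's entire proof is the one-line observation that $h_\eta = P^{\tau_\lo}_\eta \mathbf{1}$ is continuous, with \ref{ass:h_boundary} then following exactly as in your last step (the identification $h_\eta=P^{\tau_\lo}_\eta\mathbf 1$ and the deduction of \ref{ass:h_boundary} from continuity together with $h_\eta\equiv 0$ on $E\setminus\lo$ are common to both proofs). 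What your extra work buys is nonetheless genuine: the approximation by $\mathcal{C}_0$ minorants $h_n\uparrow\mathbf 1$, the error bound $0\le h_\eta-P^{\tau_\lo}_\eta h_n\le \prob^x\{X(\eta)\notin \overline B(x_0,n)\}$, and the use of Proposition \ref{prop:01} to make the convergence uniform on compacts (correctly avoiding a circular appeal to Dini's theorem) establish the lemma under the formally weaker hypothesis that $P^{\tau_\lo}_t$ maps $\mathcal{C}_0$ into continuous functions --- the version one would naturally verify in examples where only $\mathcal{C}_0$-type Feller regularity is at hand. This truncation-plus-tightness technique is also the one the paper itself deploys elsewhere (e.g.\ in the proofs of Lemma \ref{lem:formerA2} and Theorem \ref{thm:cont_for_upjump}), so your proof is sound and somewhat more robust, just unnecessary for the lemma as literally stated.
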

\begin{proof}
Follows from the identity $h_\eta = P^{\tau_\lo}_\eta \mathbf{1}$, where $\mathbf{1}$ denotes a function identically equal $1$.
\end{proof}

\section{Stopping with discontinuities on $\lo^c$}\label{sec:general_func}
In this section we explore a stopping problem with a more general payoff function $F$:
$$
J(s, x, \tau) = \ee^x\Big\{\int_0^{\tau \wedge \tau_\lo} e^{-\alpha u} f\big(s+u,X(u)\big) du
+ e^{-\alpha (\tau \wedge \tau_\lo)} F\big(s + (\tau \wedge \tau_\lo), X(\tau \wedge \tau_\lo)\big)\Big\},
$$
where $f, F$ are measurable bounded functions that are continuous in $s$ uniformly in $x$ from compact sets and $F$ is continuous on $[0, \infty) \times \lo$. In particular, $F$ can be of the form
$$
F(s,x) = \ind{x \in \bar \lo} G(s, x) + \ind{x \notin \bar \lo} H(s,x),
$$
where $G,H$ are continuous bounded functions. This is a complementary problem to the one described in preceding sections: a discontinuity of the payoff manifests itself only when the process $(X(t))$ jumps to $\bar \lo^c$ at the time $\tau_\lo$. For a continuous process $(X(t))$ the form of $F$ outside of $\bar \lo$ is irrelevant and the problem simplifies to stopping with a continuous payoff function $G$. However, if $(X(t))$ jumps at $\tau_\lo$, the process migrates to the set $\bar \lo^c$ and the value of the functional is given by $H$.

Define a value function $w(s,x) = \sup_{\tau} J(s,x,\tau)$.

\begin{prop}
Under \ref{ass:cont} and \ref{ass:strong_feller}, the function $w$ is continuous in $\lo$.
\end{prop}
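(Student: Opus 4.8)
The plan is to follow closely the proof of Theorem~\ref{thm:cont_w_in _O}, now reading $F|_{\bar\lo}$ as the stopping payoff inside $\lo$ (which is continuous) and $F|_{\bar\lo^c}$ as the payoff collected at the exit time $\tau_\lo$; the whole difficulty is that this latter part may be discontinuous in space, and the point of Assumption~\ref{ass:cont} is to make this discontinuity invisible on short time horizons. I first record that $w=F$ on $[0,\infty)\times\lo^c$, because there $\tau_\lo=0$ and $J(s,x,\tau)=F(s,x)$ for every $\tau$.

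First I would establish the time-regularity of $w$, namely that $s\mapsto w(s,x)$ is continuous uniformly in $x$ on compact sets. This is the exact analogue of Lemma~\ref{lem:rightcont}, and its proof transfers without change: the estimates there only ever compare $f$ and the terminal payoff at one space point and two nearby times, so they rely solely on continuity of $f$ and $F$ in $s$ uniformly in $x$ on compacts and are completely insensitive to the spatial jump of $F$ across $\partial\lo$.

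Next, for $h>0$ set $w_h(s,x)=\ee^x\{\int_0^h e^{-\alpha u}f(s+u,X(u))\,du+e^{-\alpha h}w(s+h,X(h))\}$. Since $w$ is bounded, measurable and, by the previous step, continuous in $s$ uniformly in $x$ on compacts, Lemma~\ref{lem:a3'} (which uses \ref{ass:strong_feller}) shows that $(s,x)\mapsto\ee^x\{e^{-\alpha h}w(s+h,X(h))\}$ is continuous; the running-cost term is continuous as well, because for each fixed $u>0$ the map $(s,x)\mapsto\ee^x\{f(s+u,X(u))\}$ is continuous by Lemma~\ref{lem:a3'} and the value at $u=0$ is negligible in the integral, so dominated convergence applies. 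Hence $w_h$ is continuous in $(s,x)$, and it only remains to prove that $w_h\to w$ uniformly on compact subsets of $[0,\infty)\times\lo$; continuity of $w$ on $\lo$ then follows as a uniform-on-compacts limit of continuous functions.

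For this last step I would reproduce the decomposition of Theorem~\ref{thm:cont_w_in _O}. With $\tau_\lo^h=\inf\{t\ge h:X(t)\notin\lo\}$ and the delayed functional $J^h$ built from $\tau_\lo^h$, Mertens' theorem~\cite{mertens} gives $w_h=\sup_{\tau\ge h}J^h$; comparing with $\tilde w_h=\sup_{\tau\ge h}J$ yields $|w_h-\tilde w_h|\le C\,\prob^x\{\tau_\lo<h\}$, which vanishes uniformly on compacts of $\lo$ by \ref{ass:cont}. It then suffices to control $I_h:=\sup_{\tau\le h}\{J(s,x,\tau)-J(s,x,h)\}$, since $0\le w-\tilde w_h\le I_h$. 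Splitting according to $\{\tau_\lo\le h\}$ and $\{\tau_\lo>h\}$, the first event contributes at most $C\,\prob^x\{\tau_\lo<h\}$ (again small by \ref{ass:cont}), while on the second the trajectory stays in $\lo$ up to time $h$, so every evaluation of $F$ along the path is an evaluation of the continuous function $F|_\lo$. The main obstacle is precisely here: the auxiliary supremum $\sup_{\tau\le h}\ee^x\{F(s+\tau,X(\tau))\}$ need not be continuous once $F$ is discontinuous off $\lo$, so the weak-Feller/Dini argument of Theorem~\ref{thm:cont_w_in _O} cannot be applied to $F$ directly. I would resolve this by fixing a continuous bounded $G$ on $E$ that agrees with $F$ on $\lo$ (a continuous extension of $F|_{\bar\lo}$): on $\{\tau_\lo>h\}$ one then has $F(s+\tau,X(\tau))=G(s+\tau,X(\tau))$ for all $\tau\le h$, the indicator $\ind{\tau_\lo>h}$ is removed at the cost of $\|G\|\,\prob^x\{\tau_\lo<h\}$, and for the genuinely continuous $G$ the function $v_h(s,x)=\sup_{\tau\le h}\ee^x\{G(s+\tau,X(\tau))\}$ decreases monotonically to $G$ uniformly on compacts by Dini's theorem. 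Collecting the three error terms shows $I_h\to0$ uniformly on compacts of $\lo$. The recurring mechanism is that \ref{ass:cont} keeps the process inside $\lo$, where $F$ agrees with the continuous $G$, on the time scale $h$, so the discontinuity of $F$ on $\lo^c$ contributes nothing in the limit $h\to0$.
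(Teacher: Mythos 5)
Your proposal follows the paper's own route: the paper's proof consists precisely of (i) obtaining continuity of $s\mapsto w(s,x)$ uniformly in $x$ from compact sets as in Lemma~\ref{lem:rightcont}, using only continuity of $s \mapsto \big(f(s,x),F(s,x)\big)$ uniform in $x$ on compacts, and (ii) the statement that the rest follows the lines of Theorem~\ref{thm:cont_w_in _O}. Your steps --- continuity of $w_h$ via Lemma~\ref{lem:a3'}, the Mertens identity $w_h=\sup_{\tau\ge h}J^h$, the bounds $|w_h-\tilde w_h|\le C\,\prob^x\{\tau_\lo<h\}$ and $0\le w-\tilde w_h\le I_h$, and the splitting of $I_h$ along $\{\tau_\lo\le h\}$ versus $\{\tau_\lo>h\}$ --- are exactly those lines, and you correctly isolate the one place where Theorem~\ref{thm:cont_w_in _O} cannot be quoted verbatim, namely the Dini argument applied to the payoff collected strictly inside $\lo$.

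However, your resolution of that difficulty has a genuine gap: you ``fix a continuous bounded $G$ on $E$ that agrees with $F$ on $\lo$ (a continuous extension of $F|_{\bar\lo}$)'', but no such $G$ need exist under the stated hypotheses. The function $F$ is assumed continuous only on the \emph{open} set $[0,\infty)\times\lo$; its restriction to $\bar\lo$ need not be continuous, and Tietze's theorem extends only from closed sets. For instance $F(s,x)=\sin\big(1/\rho(x,\lo^c)\big)$ for $x\in\lo$ and $F=0$ on $\lo^c$ is bounded, measurable, continuous on $[0,\infty)\times\lo$ and (trivially) continuous in $s$ uniformly in $x$, yet $F|_{\lo}$ admits no continuous extension to $\bar\lo$. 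Nor can the step be repaired by extending $F$ only from a compact neighbourhood $K_1\subset\lo$ of the compact set $K$ under consideration (Tietze does apply there): the replacement of $F$ by that local extension costs an error $\prob^x\{\exists\,u\le h:\ X(u)\notin K_1\}$, and Assumption~\ref{ass:cont} controls only the exit from $\lo$, not the exit from $K_1$; uniform smallness of that probability is exactly what Assumption~\ref{ass:a2'} would provide, and it is not assumed in this proposition. So your argument proves the statement only for those $F$ whose restriction to $\lo$ extends to a continuous bounded function on $E$. In fairness, this covers the situation the paper actually has in mind --- its motivating example $F=\ind{x\in\bar\lo}G+\ind{x\notin\bar\lo}H$ with $G,H$ continuous, and the standing assumptions of Sections~\ref{sec:infinite} and~\ref{sec:finite_time_func} --- and the paper's own two-line proof glosses over exactly the same point; but as a proof of the literal statement, the existence of the extension must either be added as a hypothesis or replaced by an argument valid for $F$ merely continuous on the open set $\lo$.
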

\begin{proof}
As in Lemma \ref{lem:rightcont}, using continuity of $s \mapsto \big(f(s,x), F(s,x)\big)$ uniform in $x$ from compact sets we obtain that $s \mapsto w(s,x)$ is continuous uniformly in $x$ from compact sets. The rest of the proof follows similar lines as the proof of Theorem \ref{thm:cont_w_in _O}.
\end{proof}

Define a penalized equation (c.f. equation \eqref{eqn:def_w_beta}):
$$
w^\beta (s, x) = \ee^x\Big\{\int_0^{\tau_\lo} e^{-\alpha u} \big[ f  + \beta (F - w^\beta)^+ \big]\big(s+u,X(u)\big) du
+e^{-\alpha \tau_\lo} F\big(s + \tau_\lo, X(\tau_\lo)\big)\Big\}.
$$
As previously, this function is a fixed point of a contraction operator (see the proof of Lemma \ref{lem:solv_w_beta}).
To establish the convergence of $w^\beta$ to $w$, we need the following technical lemma:
\begin{lemma}\label{lem:ass_cont_F}
Under \ref{ass:a1} and \ref{ass:strong_feller} the mapping
$$
(s,x) \mapsto \ee^{x}\left\{F(s+\tau_\lo,X(\tau_\lo))\right\}
$$
is continuous in $[0, \infty) \times \lo$ for any bounded measurable function $F$ that is continuous in $s$ uniformly in $x$ from compact sets.
\end{lemma}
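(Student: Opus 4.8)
The plan is to realize the map $\Phi(s,x):=\ee^x\{F(s+\tau_\lo,X(\tau_\lo))\}$ as a locally uniform limit of jointly continuous functions on $[0,\infty)\times\lo$. Since $\tau_\lo=0$ on $\lo^c$, we have $\Phi=F$ there; the content is the behaviour for $x\in\lo$. For $\eta>0$ set $\Psi_\eta(s,x):=\ee^x\{\Phi(s+\eta,X(\eta))\}$. The argument splits into three parts: (i) record that $\Phi$ is bounded by $\|F\|$, measurable, and continuous in $s$ uniformly in $x$ from compact sets; (ii) deduce from Lemma \ref{lem:a3'} (which is where assumption \ref{ass:strong_feller} enters) that each $\Psi_\eta$ is jointly continuous; (iii) show that $\Phi-\Psi_\eta\to 0$ as $\eta\to 0$, uniformly on compact subsets of $\lo$, so that $\Phi$, being a locally uniform limit of continuous functions, is continuous on $[0,\infty)\times\lo$.

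For (iii) I would split according to the size of $\tau_\lo$. By the Markov property at the deterministic time $\eta$, on the event $\{\tau_\lo>\eta\}$ the exit data accumulated after time $\eta$ are governed by $\Phi(s+\eta,X(\eta))$, whence
\begin{equation*}
\Phi(s,x)=\ee^x\{\ind{\tau_\lo\le\eta}F(s+\tau_\lo,X(\tau_\lo))\}+\ee^x\{\ind{\tau_\lo>\eta}\Phi(s+\eta,X(\eta))\}.
\end{equation*}
Writing $\ind{\tau_\lo>\eta}=1-\ind{\tau_\lo\le\eta}$ in the second term gives $\Phi(s,x)-\Psi_\eta(s,x)=\ee^x\{\ind{\tau_\lo\le\eta}[F(s+\tau_\lo,X(\tau_\lo))-\Phi(s+\eta,X(\eta))]\}$, so $|\Phi(s,x)-\Psi_\eta(s,x)|\le 2\|F\|\,\prob^x\{\tau_\lo\le\eta\}$. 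Since \ref{ass:a1} implies \ref{ass:cont} (Lemma \ref{lem:a2}), the right-hand side tends to $0$ as $\eta\to 0$ uniformly in $x$ from compact subsets of $\lo$, which is exactly the locally uniform convergence required.

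The delicate point, and the one I expect to be the main obstacle, is (i): the $s$-continuity of $\Phi$ uniform in $x$ from compact subsets of $E$, which is precisely the hypothesis under which Lemma \ref{lem:a3'} applies to $\Phi$. For $x\in\lo^c$ it is inherited from $F$; for $x\in\lo$ I would estimate $|\Phi(s',x)-\Phi(s,x)|\le\ee^x\{|F(s'+\tau_\lo,X(\tau_\lo))-F(s+\tau_\lo,X(\tau_\lo))|\}$ and localise $X(\tau_\lo)$ to a compact set $L$ by the tightness estimate of Proposition \ref{prop:01}: on the event that the trajectory remains in $L$ the integrand is controlled by the $s$-modulus of continuity of $F$ on $L$ (uniform in the time argument), while the complementary event carries probability at most $\ve$, uniformly over $x$ in a fixed compact set. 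The crux is this transfer of the regularity of $F$ through the random, possibly large, time $\tau_\lo$ to a point $X(\tau_\lo)\in\partial\lo\cup\lo^c$ where $F$ may be spatially discontinuous; it is the absence of a discount factor that forces the tightness/localisation argument rather than a simple time truncation (the contribution of $\{\tau_\lo=\infty\}$ being null by the usual convention for the functional). Once (i) is established, (ii) is immediate from Lemma \ref{lem:a3'}, and combining (ii) with (iii) yields continuity of $\Phi$ on $[0,\infty)\times\lo$.
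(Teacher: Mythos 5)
Your steps (ii) and (iii) are exactly the paper's proof. The paper defines $\Phi_h(s,x)=\ee^x\big\{\ee^{X(h)}\{F(s+h+\tau_\lo,X(\tau_\lo))\}\big\}$, which is precisely your $\Psi_\eta$ with $\eta=h$; it derives the same Markov-property identity you write, obtains the same bound $\big|\Phi(s,x)-\Phi_h(s,x)\big|\le 2\|F\|\,\prob^x\{\tau_\lo<h\}$, and concludes by Lemma \ref{lem:a2} (so the right-hand side tends to $0$ uniformly on compact subsets of $\lo$) together with Lemma \ref{lem:a3'} (joint continuity of each $\Phi_h$). So the skeleton of your argument is the intended one.

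The difference is your step (i), and there the situation is delicate in a way your sketch does not resolve. You are right that Lemma \ref{lem:a3'} applies to $\Phi$ only if $s\mapsto\Phi(s,x)$ is continuous uniformly in $x$ from compact subsets of $E$ --- a hypothesis the paper never verifies (its proof simply cites Lemma \ref{lem:a3'}). But your proposed verification does not close this: Proposition \ref{prop:01} is a finite-horizon tightness estimate, so it localizes the trajectory, and hence $X(\tau_\lo)$, only on the event $\{\tau_\lo\le T\}$ for a deterministic $T$. On the remaining event $\{T<\tau_\lo<\infty\}$ neither the time argument $s+\tau_\lo$ nor the position $X(\tau_\lo)$ is controlled, and without discounting its contribution can only be estimated by $2\|F\|\,\prob^x\{T<\tau_\lo<\infty\}$; the uniformity of this bound in $x$ from compacts --- which is exactly what you need --- is not available in general: under (A1) the continuous functions $x\mapsto\prob^x\{\tau_\lo>T\}$ decrease, as $T\to\infty$, to the merely upper semicontinuous function $x \mapsto \prob^x\{\tau_\lo=\infty\}$, so Dini's theorem does not apply. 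Your closing remark has the logic backwards: the absence of a discount factor does not ``force'' the localisation argument, it is what breaks it. Indeed, in the only place the lemma is used (the exit term of the penalized equation in Section \ref{sec:general_func}), the payoff enters as $e^{-\alpha\tau_\lo}F(s+\tau_\lo,X(\tau_\lo))=e^{\alpha s}\,\tl F(s+\tau_\lo,X(\tau_\lo))$ with $\tl F(u,y)=e^{-\alpha u}F(u,y)$; for this $\tl F$ the tail $\{\tau_\lo>T\}$ costs at most $2e^{-\alpha T}\|F\|$, and your truncation-plus-localisation argument for (i) does go through. So the gap you sensed is real, your fix does not repair it for the lemma as stated, but the defect is inherited from the paper's own proof and disappears in the discounted form in which the lemma is actually applied.
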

\begin{proof}
Lemma \ref{lem:a3'} implies that $\Phi_h(s,x) = \ee^{x}\big\{\ee^{X(h)}\big\{F(s+h+\tau_\lo,X(\tau_\lo))\big\} \big\}$ is continuous for $h > 0$. Note that
\begin{align*}
&\ee^{x}\left\{F(s+\tau_\lo,X(\tau_\lo))\right\}\\
&=\ee^{x}\Big\{\ee^{X(h)}\left\{F(s+h+\tau_\lo,X(\tau_\lo))\right\} + \ind{\tau_\lo < h}\left(F(s+\tau_\lo,X(\tau_\lo))-\ee^{X(h)}\left\{F(s+h+\tau_\lo,X(\tau_\lo))\right\}\right)\Big\}.
\end{align*}
Hence
$$
\big| \ee^{x}\left\{F(s+\tau_\lo,X(\tau_\lo))\right\} - \Phi_h(s,x)\big|
\le 2\|F\| \prob^x\{\tau_\lo < h\}.
$$
The right-hand side converges to $0$ uniformly in $x$ from compact subsets of $\lo$, as $h\to 0$, by virtue of Lemma \ref{lem:a2}.
\end{proof}

\begin{prop}\label{prop:general_F_prop2}
Under \ref{ass:a1} and \ref{ass:strong_feller}:
\begin{enumerate}
\item There is a unique measurable bounded solution $w^{\beta}$ to the above penalized equation. This function is continuous on $[0, \infty) \times \lo$.
\item Functions $w^{\beta}$ are non-decreasing in $\beta$.
\end{enumerate}
\end{prop}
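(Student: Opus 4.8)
The plan is to treat the two assertions separately, following the templates of Lemmas \ref{lem:solv_w_beta} and \ref{lem:wbeta_form_mbeta} and of Corollary \ref{cor:cont_wbeta}, with the strong Feller property \ref{ass:strong_feller} supplying the continuity that the (possibly space-discontinuous) data $f$ and $F$ no longer provide on their own. For existence and uniqueness I would first apply Lemma \ref{lem:transf} with the constant process $b\equiv\beta$ to rewrite the penalized equation as the fixed-point identity $w^\beta=\mathcal T w^\beta$, where
\[
\mathcal T\phi(s,x)=\ee^x\Big\{\int_0^{\tau_\lo} e^{-(\alpha+\beta)u}\big[f+\beta(F-\phi)^+ +\beta\phi\big]\big(s+u,X(u)\big)\,du+e^{-(\alpha+\beta)\tau_\lo}F\big(s+\tau_\lo,X(\tau_\lo)\big)\Big\}.
\]
Since $\beta(F-\phi)^+ +\beta\phi=\beta\,(F\vee\phi)$ and $\phi\mapsto F\vee\phi$ is $1$-Lipschitz, the boundary term cancels in $\mathcal T\phi_1-\mathcal T\phi_2$ and one recovers the estimate $\|\mathcal T\phi_1-\mathcal T\phi_2\|\le\frac{\beta}{\alpha+\beta}\|\phi_1-\phi_2\|$ exactly as in Lemma \ref{lem:solv_w_beta}; Banach's fixed-point theorem on the bounded measurable functions then yields the unique $w^\beta$.

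For continuity I would show that $\mathcal T$ maps into itself the closed subspace $\mathcal K$ of bounded measurable functions that are continuous on $[0,\infty)\times\lo$ and continuous in $s$ uniformly in $x$ on compact subsets of $E$; being a contraction, $\mathcal T$ then keeps its fixed point $w^\beta$ inside the closed set $\mathcal K$. The second defining property of $\mathcal K$ is preserved by an argument identical to Lemma \ref{lem:rightcont} (boundedness of the data and uniform-in-$s$ continuity of $f$, $F$, $\phi$), and it is what lets $g:=f+\beta(F\vee\phi)$ inherit uniform-in-$s$ continuity up to $\partial\lo$. For $\phi\in\mathcal K$ the boundary term $(s,x)\mapsto\ee^x\{e^{-(\alpha+\beta)\tau_\lo}F(s+\tau_\lo,X(\tau_\lo))\}$ is continuous on $[0,\infty)\times\lo$ by the argument of Lemma \ref{lem:ass_cont_F}: the extra factor $e^{-(\alpha+\beta)\tau_\lo}\le1$ only simplifies the estimates, and it renders the contribution of $\{\tau_\lo>T\}$ at most $\|F\|e^{-(\alpha+\beta)T}$, which is what secures the uniform-in-$s$ continuity fed into Lemma \ref{lem:a3'}.

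The main obstacle is the integral term, because its integrand $g$ may be discontinuous in $x$, so Lemma \ref{lem:cont_stopped_potential} does not apply directly. Here I would exploit that under \ref{ass:a1} and \ref{ass:strong_feller} both \ref{ass:cont} and \ref{ass:h_boundary} hold (Lemmas \ref{lem:a2} and \ref{lem:a1_to_a5}). By Fubini, $\ee^x\{\int_0^{\tau_\lo}e^{-(\alpha+\beta)u}g(s+u,X(u))\,du\}=\int_0^\infty e^{-(\alpha+\beta)u}\varphi(s,u,x)\,du$ with $\varphi(s,u,x)=\ee^x\{\ind{u<\tau_\lo}g(s+u,X(u))\}$, and on $\{u<\tau_\lo\}$ one has $X(u)\in\lo$, so only the good values of $g$ enter. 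To get joint continuity of $\varphi(\cdot,u,\cdot)$ on $[0,\infty)\times\lo$ for fixed $u>0$ I would decompose, as in \eqref{est1}, at a small time $h<u$: the part on $\{\tau_\lo\le h\}$ is bounded by $\|g\|\,\prob^x\{\tau_\lo\le h\}$ and tends to $0$ uniformly on compact subsets of $\lo$ by \ref{ass:cont}, while the main part equals $\ee^x\{\psi(X(h))\}$ with $\psi(y)=P^{\tau_\lo}_{u-h}[g(s+u,\cdot)](y)$ bounded measurable; a tightness argument (Proposition \ref{prop:01}) shows $\psi$ is continuous in $s$ uniformly in $y$ on compacts, whence Lemma \ref{lem:a3'} gives continuity of $\ee^x\{\psi(X(h))\}$. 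Dominated convergence then transfers continuity to the $u$-integral, completing the verification $\mathcal T(\mathcal K)\subseteq\mathcal K$.

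The monotonicity claim is a transcription of Lemma \ref{lem:wbeta_form_mbeta}: applying Lemma \ref{lem:transf} with an arbitrary $b\in M_\beta$ and using $\beta(F-w^\beta)^+ +b\,w^\beta\ge b\,F$ (valid since $b\le\beta$ and $(F-w^\beta)^+\ge F-w^\beta$) gives
\[
w^\beta(s,x)=\sup_{b\in M_\beta}\ee^x\Big\{\int_0^{\tau_\lo}e^{-\alpha u-\int_0^u b(t)\,dt}\big[f+bF\big]\big(s+u,X(u)\big)\,du+e^{-\alpha\tau_\lo-\int_0^{\tau_\lo}b(t)\,dt}F\big(s+\tau_\lo,X(\tau_\lo)\big)\Big\},
\]
the supremum being attained at $b=\beta\,\ind{F\ge w^\beta}$. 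Since $M_\beta\subseteq M_{\beta'}$ whenever $\beta\le\beta'$, the right-hand side is non-decreasing in $\beta$, exactly as in Proposition \ref{prop:pointwise}. I expect the integral-term continuity to be the crux; the remaining steps are routine adaptations of the arguments already developed.
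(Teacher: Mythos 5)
Your proposal is correct and follows essentially the same route as the paper, whose (very terse) proof likewise obtains existence/uniqueness from the contraction argument of Lemma \ref{lem:solv_w_beta}, continuity from Lemmas \ref{lem:a3'} and \ref{lem:ass_cont_F}, and monotonicity from the representation of Lemma \ref{lem:wbeta_form_mbeta}. Your additional work on the integral term --- combining the small-time decomposition of \eqref{est1} with \ref{ass:cont} (via Lemma \ref{lem:a2}) and the strong Feller property --- is a legitimate filling-in of a step the paper leaves implicit behind its citations, not a different method.
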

\begin{proof}
Existence of a unique bounded measurable solution follows from Lemma \ref{lem:solv_w_beta}. Lemma \ref{lem:a3'} and \ref{lem:ass_cont_F} imply the continuity of $w^\beta$. Assertion (2) follows from Lemma \ref{lem:wbeta_form_mbeta}.
\end{proof}

\begin{prop}
Under \ref{ass:a1} and \ref{ass:strong_feller} the sequence of functions $w^\beta$ converges to $w$ and this convergence is uniform on compact subsets of $[0, \infty) \times \lo$.
\end{prop}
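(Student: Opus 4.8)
The plan is to reproduce, in the present more general setting, the argument that established Proposition \ref{prop:pointwise} and Corollary \ref{cor:lower_cont_w}: first obtain the pointwise monotone convergence $w^\beta \uparrow w$ on $[0,\infty)\times\lo$, and then upgrade it to uniform convergence on compact subsets by Dini's theorem. The hypotheses (A1) and \ref{ass:strong_feller} give us everything the Dini step requires: by Proposition \ref{prop:general_F_prop2} the functions $w^\beta$ are continuous on $[0,\infty)\times\lo$ and non-decreasing in $\beta$, while $w$ is continuous on $[0,\infty)\times\lo$ by the first proposition of this section (note that (A1) implies \ref{ass:cont} through Lemma \ref{lem:a2}, so both of its hypotheses hold). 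Thus the whole burden of the proof lies in the pointwise identification of the monotone limit $w^\infty := \lim_{\beta\to\infty} w^\beta$, which exists by monotonicity and boundedness, with $w$.

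For the pointwise convergence I would first record the two representations of $w^\beta$ that underpin the proof of Proposition \ref{prop:pointwise}, now with $F$ playing the role of both $G$ and $H$. Exactly as in Lemmas \ref{lem:wbeta_form_mbeta} and \ref{lem:wbeta_form_tau} (whose proofs transfer, using the continuity of $F$ and $w^\beta$ on $[0,\infty)\times\lo$ to define the optimal threshold time), one obtains the $M_\beta$-representation, which gives the monotonicity in $\beta$ already recorded in Proposition \ref{prop:general_F_prop2}, and the stopping-time representation
\begin{equation*}
w^\beta(s,x) = \sup_\tau \Big\{ J(s,x,\tau) - \ee^x \big\{ \ind{\tau<\tau_\lo} e^{-\alpha\tau} (F - w^\beta)^+(s+\tau, X(\tau)) \big\} \Big\}.
\end{equation*}
Since the subtracted term is non-negative, this immediately yields $w^\beta \le w$, hence $w^\infty \le w$. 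To obtain the reverse inequality I would, as in Proposition \ref{prop:pointwise}, first show $w^\infty \ge F$ on $[0,\infty)\times\lo$: feeding the control $b^\eta(u)=\ind{u\le\eta}\beta$ into the $M_\beta$-representation and letting $\beta\to\infty$ and then $\eta\to 0$, the mass of $\beta e^{-(\alpha+\beta)u}\,du$ concentrates at $u=0^+$, and since $\tau_\lo>0$ almost surely for $x\in\lo$ and $F$ is continuous there, the surviving term converges to $F(s,x)$ while the other two terms vanish. With $w^\infty\ge F$ on $\lo$, the correction $(F-w^\beta)^+$ decreases to $0$ on $\lo$, so letting $\beta\to\infty$ in the stopping-time representation and applying dominated convergence gives $w^\infty \ge J(s,x,\tau)$ for every $\tau$, whence $w^\infty \ge w$ and therefore $w^\infty = w$ on $[0,\infty)\times\lo$.

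The point needing the most care — and the reason the statement is restricted to compact subsets of $[0,\infty)\times\lo$ rather than the full state space — is the interplay between the discontinuity of $F$ on $\lo^c$ and the evaluations appearing in the estimates. All terms that must be controlled by continuity are evaluated strictly before $\tau_\lo$: in the penalty integral the argument is $X(u)$ with $u<\tau_\lo$, and in the correction term it is $X(\tau)$ on the event $\{\tau<\tau_\lo\}$. Because $\lo$ is open, $\{\tau<\tau_\lo\}\subseteq\{X(\tau)\in\lo\}$, so every such evaluation lands in the region where $F$ is continuous, and the possible jump of $F$ across $\partial\lo$ never enters the argument; the boundary value $F(s+\tau_\lo,X(\tau_\lo))$ carries through unchanged inside $J$ and causes no difficulty. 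Once $w^\infty=w$ pointwise on $\lo$ is established, Dini's theorem applied to the continuous, monotone sequence $w^\beta$ and the continuous limit $w$ delivers uniform convergence on every compact subset of $[0,\infty)\times\lo$, completing the proof.
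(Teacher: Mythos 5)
Your proposal follows the paper's proof exactly: the paper likewise obtains the stopping-time representation of $w^\beta$ (the analogue of Lemma \ref{lem:wbeta_form_tau}, using continuity of $F$ and $w^\beta$ on $[0,\infty)\times\lo$), repeats the argument of Proposition \ref{prop:pointwise} for pointwise convergence, and concludes with monotonicity from Proposition \ref{prop:general_F_prop2} plus Dini's theorem. Your write-up merely fills in details the paper leaves as references — in particular the observation that $\{\tau<\tau_\lo\}\subseteq\{X(\tau)\in\lo\}$ keeps all relevant evaluations in the continuity region, and that (A1) yields \ref{ass:cont} via Lemma \ref{lem:a2} so that the continuity of $w$ on $[0,\infty)\times\lo$ is available for the Dini step — all of which is consistent with the paper's intent.
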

\begin{proof}
Using continuity of $F$ on $[0, \infty) \times \lo$, in a similar way as in Lemma \ref{lem:wbeta_form_tau} we obtain
$$
w^\beta (s, x) = \sup_\tau \Big\{ J(s, x, \tau) - \ee^x \big\{\ind{\tau < \tau_\lo} e^{-\alpha \tau}
\big( F - w^\beta \big)^+\big(s+\tau,X(\tau)\big) \big\}\Big\}.
$$
Proceeding as in Proposition \ref{prop:pointwise} we prove the pointwise convergence of $w^\beta$ to $w$. By Proposition \ref{prop:general_F_prop2}, functions $w^\beta$ are non-decreasing in $\beta$, which implies, by Dini's theorem, uniform convergence on compact sets of $[0, \infty) \times \lo$.
\end{proof}

\section{Infinite time horizon}\label{sec:infinite}
Consider an optimal stopping problem with infinite horizon
\begin{equation}\label{eqn:infinite_stopping}
w^\infty(s,x) = \sup_\tau J^\infty(s,x,\tau),
\end{equation}
where
\begin{equation}\label{eqn:inffunct}
J^\infty(s,x,\tau)=\ee^x \Big\{ \int_0^\tau e^{-\alpha u} f\big(s + u, X(u)\big) du + e^{-\alpha \tau} F\big(s + \tau, X(\tau)\big) \Big\}.
\end{equation}
Assume the process $X(t)$ satisfies the strong Feller property \ref{ass:strong_feller}, $\alpha > 0$ and functions $f, F$ are measurable bounded
and continuous in $s$ uniformly in $x$ from compact sets.

The penalized equation has the following form: for $\beta \ge 0$
\begin{equation}\label{eqn:inf_penalized}
w^{\beta,\infty}(s,x) = \ee^x \Big\{ \int_0^\infty e^{-\alpha u} \big[f + \beta (F - w^{\beta,\infty})^+\big]\big(s + u, X(u)\big)du \Big\}.
\end{equation}

\begin{lemma}\label{lem:inf_wbeta}
Assume \ref{ass:strong_feller}.
\begin{enumerate}
 \item There is a unique (in the space of measurable bounded functions) solution $w^{\beta, \infty}$ of the penalized equation \eqref{eqn:inf_penalized} and this solution is continuous.
\item The mapping $s \mapsto w^\infty(s,x)$ is continuous uniformly in $x$ from compact sets.
\item We have the following equivalent representation of $w^{\beta, \infty}$:
\begin{equation}\label{eqn:wbetainfty_form_mbeta}
\begin{aligned}
w^{\beta,\infty}(s,x)
&= \sup_{b\in M_\beta} \ee^x\Big\{\int_0^{\infty}e^{-\alpha u - \int_0^{u}b(t)dt} \Big[f\big(s+u,X(u)\big)+ b(u)F\big(s+u,X(u)\big)\Big]du \Big\},
\end{aligned}
\end{equation}
where $M_\beta$ is the class of progressively measurable processes with values in $[0,\beta]$.
\end{enumerate}
\end{lemma}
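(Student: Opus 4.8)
The plan is to establish the three assertions in turn, in each case adapting an argument already developed for the $\tau_\lo$-stopped setting, with $\tau_\lo$ replaced by $+\infty$ and the strong Feller property \ref{ass:strong_feller} supplying the spatial regularity previously furnished by (A1). For the existence and uniqueness in part (1), I would first recast \eqref{eqn:inf_penalized} into contractive form. By the infinite-horizon version of Lemma \ref{lem:transf} (same proof, with $\tau_\lo$ replaced by $+\infty$ and no boundary term), applied with the constant process $b\equiv\beta$, equation \eqref{eqn:inf_penalized} is equivalent to $w^{\beta,\infty}=\mathcal T w^{\beta,\infty}$, where
$$
\mathcal T\phi(s,x)=\ee^x\Big\{\int_0^\infty e^{-(\alpha+\beta)u}\big[f+\beta(F-\phi)^++\beta\phi\big]\big(s+u,X(u)\big)\,du\Big\}.
$$
Since $x\mapsto (F-x)^++x=\max(F,x)$ is $1$-Lipschitz, one gets $\|\mathcal T\phi_1-\mathcal T\phi_2\|\le\frac{\beta}{\alpha+\beta}\|\phi_1-\phi_2\|$, so $\mathcal T$ is a contraction on bounded measurable functions with a unique fixed point, exactly as in Lemma \ref{lem:solv_w_beta}. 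Note the transform is genuinely needed: the untransformed operator carries a factor $\beta/\alpha$ and need not contract.

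For the continuity in part (1), I would show that $\mathcal T$ maps bounded jointly continuous functions into themselves, so that its fixed point, lying in this closed subspace, is continuous. Writing $g=f+\beta(F-\phi)^++\beta\phi$ and using Fubini,
$$
\mathcal T\phi(s,x)=\int_0^\infty e^{-(\alpha+\beta)u}\,\psi(s,u,x)\,du,\qquad \psi(s,u,x)=\ee^x\{g(s+u,X(u))\}.
$$
For each fixed $u>0$, Lemma \ref{lem:a3'} yields joint continuity of $(s,x)\mapsto\psi(s,u,x)$, because $g$ is bounded and continuous in $s$ uniformly in $x$ on compact sets (this inherits from $f,F$ and from any bounded jointly continuous $\phi$). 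Dominated convergence then transfers joint continuity to $\mathcal T\phi$; the integrand at $u=0$ is a null set and may be ignored, which is fortunate since Lemma \ref{lem:a3'} applies only for $h>0$.

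Part (3) is the infinite-horizon analogue of Lemma \ref{lem:wbeta_form_mbeta}. For arbitrary $b\in M_\beta$ the transform gives $w^{\beta,\infty}(s,x)=\ee^x\{\int_0^\infty e^{-\alpha u-\int_0^u b(t)dt}[f+\beta(F-w^{\beta,\infty})^++b(u)w^{\beta,\infty}](s+u,X(u))\,du\}$, and since $b\le\beta$ one has $\beta(F-w^{\beta,\infty})^++b(u)w^{\beta,\infty}\ge b(u)\max(F,w^{\beta,\infty})\ge b(u)F$, which gives ``$\ge$'' in \eqref{eqn:wbetainfty_form_mbeta}; equality is attained by the bang-bang choice $b(u)=\beta\ind{F(s+u,X(u))\ge w^{\beta,\infty}(s+u,X(u))}$, exactly as in Lemma \ref{lem:wbeta_form_mbeta}. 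Part (2) repeats Lemma \ref{lem:rightcont} with $\tau_\lo=+\infty$: for $s_n\to s$ one truncates at a horizon $T$ (legitimate since $f,F$ are bounded and $\alpha>0$), confines $X$ to a compact set on $[0,T]$ via Proposition \ref{prop:01}, and uses the $s$-continuity of $f,F$ uniform in $x$ on that compact to bound $\sup_\tau|J^\infty(s,x,\tau)-J^\infty(s_n,x,\tau)|$, hence $|w^\infty(s,x)-w^\infty(s_n,x)|$, uniformly in $x$ on compacts.

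The genuine obstacle I expect is the continuity claim in part (1): one must propagate joint continuity through $\mathcal T$ even though $f$ and $F$ are merely continuous in $s$ (uniformly in $x$) rather than jointly continuous, and even though the strong Feller smoothing in Lemma \ref{lem:a3'} degenerates as $u\downarrow0$. Everything else —the contraction for (1), the representation (3), and the $s$-continuity (2)— is a faithful transcription of Lemmas \ref{lem:solv_w_beta}, \ref{lem:wbeta_form_mbeta} and \ref{lem:rightcont}.
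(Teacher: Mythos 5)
Your proposal is correct and follows essentially the same route as the paper: the transformed contraction operator for existence and uniqueness (as in Lemma \ref{lem:solv_w_beta}), Lemma \ref{lem:a3'} combined with dominated convergence for continuity, the argument of Lemma \ref{lem:rightcont} for assertion (2), and the argument of Lemma \ref{lem:wbeta_form_mbeta} specialized to $\lo=E$ for assertion (3). The one small deviation is in the continuity step, where the paper asserts that $\mathcal T^\infty$ maps bounded measurable functions into continuous ones, whereas you verify invariance of the closed subspace of bounded jointly continuous functions; your version is in fact the more careful one, since Lemma \ref{lem:a3'} requires its uniform-in-$x$ continuity-in-$s$ hypothesis on the integrand, which a merely measurable $\phi$ need not supply.
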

\begin{proof}
(1) Similarly as in Lemma \ref{lem:solv_w_beta} the function $w^{\beta, \infty}$ is a fixed point of the operator
$$
\mathcal T^\infty \phi (s,x) = \ee^x \Big\{ \int_0^\infty e^{-(\alpha +\beta) u} \big(f + \beta \phi + \beta (F - \phi)^+ \big)\big(s + u, X(u)\big) du \Big\}.
$$
This operator is a contraction on the space of measurable bounded functions, which implies that $w^{\beta,\infty}$ is a unique fixed point of $\mathcal T^\infty$ on this space. Lemma \ref{lem:a3'} implies that $\mathcal T^\infty$ maps the space of measurable bounded functions into the space of continuous bounded functions. Hence, $w^{\beta, \infty}$ is continuous.

(2) The proof is similar to that of Lemma \ref{lem:rightcont}. We use the continuity of $s \mapsto \big(f(s,x), F(s,x)\big)$ uniform in $x$ from compact sets.

(3) This assertion follows immediately from Lemma \ref{lem:wbeta_form_mbeta}   with $\lo=E$.
\end{proof}

\begin{lemma}\label{lem:inf_difference_estimate}
Assume there is $\ga \subseteq E$ such that for $x \in \ga$
$$
F(s,x) = R_\alpha \phi(s,x) := \ee^x \Big\{ \int_0^\infty e^{-\alpha u} \phi(s+u, X(u)) du \Big\},
$$
where $\phi: [0, \infty) \times \ga \to \er$ is measurable and bounded.
Then
$$
(F - w^{\beta,\infty})^+(s,x) \le \|f - \phi\|\ \ee^x \Big\{ \int_0^\infty e^{-\alpha u - \beta \int_0^u \ind{X(t) \in \ga} dt}du \Big\},\qquad
(s,x) \in [0, \infty) \times \ga.
$$
\end{lemma}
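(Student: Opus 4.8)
The plan is to transform both the penalized equation \eqref{eqn:inf_penalized} and the resolvent representation $F=R_\alpha\phi$ by introducing the multiplicative killing functional with rate $b(u)=\beta\ind{X(u)\in\ga}$. The effect of this transformation is to absorb the penalty term into the discount factor, replacing $e^{-\alpha u}$ by $e^{-\alpha u-\beta\int_0^u\ind{X(t)\in\ga}dt}$, which is exactly the weight appearing on the right-hand side of the claim. Writing $v:=w^{\beta,\infty}$ for brevity, I would first apply Lemma \ref{lem:transf} with $\lo=E$ (so that $\tau_\lo=\infty$ and the boundary terms vanish, since $\alpha>0$) to $v$, with $g=f+\beta(F-v)^+$ and $b(u)=\beta\ind{X(u)\in\ga}$. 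This rewrites \eqref{eqn:inf_penalized} as
$$
v(s,x)=\ee^x\Big\{\int_0^\infty e^{-\alpha u-\beta\int_0^u\ind{X(t)\in\ga}dt}\big[f+\beta(F-v)^+ +\beta\ind{X(u)\in\ga}v\big](s+u,X(u))\,du\Big\}.
$$

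Next I would apply the same transformation to $\tilde F:=R_\alpha\phi$ (the resolvent presupposing $\phi$ as a bounded measurable function on the whole space), with $g=\phi$ and the same $b$, obtaining
$$
\tilde F(s,x)=\ee^x\Big\{\int_0^\infty e^{-\alpha u-\beta\int_0^u\ind{X(t)\in\ga}dt}\big[\phi+\beta\ind{X(u)\in\ga}\tilde F\big](s+u,X(u))\,du\Big\}.
$$
The point that makes the argument go through even though $F=R_\alpha\phi$ is only assumed on $\ga$ is that the killing indicator screens off the behaviour of $\tilde F$ outside $\ga$: since $\tilde F=F$ on $\ga$, one has $\ind{X(u)\in\ga}\tilde F(s+u,X(u))=\ind{X(u)\in\ga}F(s+u,X(u))$ identically, so $\tilde F$ may be replaced by $F$ inside the last display. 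Subtracting the two representations and evaluating at $x\in\ga$, where $\tilde F(s,x)=F(s,x)$, yields
$$
(F-v)(s,x)=\ee^x\Big\{\int_0^\infty e^{-\alpha u-\beta\int_0^u\ind{X(t)\in\ga}dt}\,B(s+u,X(u))\,du\Big\},\qquad B=\phi-f+\beta\ind{X(u)\in\ga}(F-v)-\beta(F-v)^+.
$$

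The final step is an elementary pointwise bound on the integrand $B$. On $\{X(u)\in\ga\}$ the identity $a-a^+=-a^-$ (with $a=F-v$) gives $B=(\phi-f)-\beta(F-v)^-$, while on $\{X(u)\notin\ga\}$ one has $B=(\phi-f)-\beta(F-v)^+$; in either case the subtracted term is nonnegative, so $B\le\phi-f\le\|f-\phi\|$. Hence for $x\in\ga$
$$
(F-v)(s,x)\le\|f-\phi\|\ \ee^x\Big\{\int_0^\infty e^{-\alpha u-\beta\int_0^u\ind{X(t)\in\ga}dt}\,du\Big\},
$$
and since the right-hand side is nonnegative it also dominates $(F-v)^+$, which is the assertion. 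The only genuinely delicate point is the reconciliation of the globally defined resolvent $\tilde F$ with the representation of $F$ valid only on $\ga$; this is precisely what the $\ind{X(u)\in\ga}$ factor resolves, after which everything reduces to the transformation of Lemma \ref{lem:transf} together with the sign bookkeeping of the penalty.
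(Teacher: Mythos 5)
Your proof is correct and follows essentially the same route as the paper: both rest on the transformation of Lemma \ref{lem:transf} with $\lo=E$ and killing rate $b(u)=\beta\ind{X(u)\in\ga}$, followed by the same pointwise sign analysis of the penalty terms and the observation that $R_\alpha\phi$ agrees with $F$ on $\ga$. The only cosmetic difference is that the paper first forms the single function $\hat w^{\beta,\infty}=w^{\beta,\infty}-R_\alpha\phi$ and transforms it once, whereas you transform $w^{\beta,\infty}$ and $R_\alpha\phi$ separately and subtract, which is equivalent by linearity.
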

\begin{proof}
First notice that for any bounded measurable function $\zeta$ the following representations are equivalent
\begin{equation}\label{eqn:representation_b}
\begin{aligned}
v(s,x) &= \ee^x \Big\{ \int_0^\infty e^{-\alpha u} \zeta\big(s+u, X(u)\big) du \Big\},\\
v(s,x) &= \ee^x \Big\{ \int_0^\infty e^{-\alpha u - \int_0^u b(t) dt} \Big[\zeta\big(s+u, X(u)\big) + b(u) v\big(s+u, X(u)\big) \Big] du \Big\}
\end{aligned}
\end{equation}
for any bounded progressively measurable process $b(t)$ (compare to Lemma \ref{lem:transf} with $\lo=E$).

Define
\begin{equation}\label{eqn:integral_repres}
\hat w^{\beta, \infty}(s,x) = \ee^x \Big\{ \int_0^\infty e^{-\alpha u} \big[f + \beta (\bar w^{\beta, \infty})^- - \phi\big]\big(s + u, X(u)\big) du \Big\},
\end{equation}
where $\bar w^{\beta, \infty} = w^{\beta, \infty} - F$. Notice that $\hat w^{\beta, \infty}$ coincides with $\bar w^{\beta, \infty}$ on $[0, \infty) \times \ga$.
Applying equivalence \eqref{eqn:representation_b} for $\zeta = f + \beta (\bar w^{\beta, \infty})^- - \phi$ and $b(u) = \beta \ind{X(u) \in \ga}$ yields
\begin{align*}
&\hat w^{\beta, \infty}(s,x) \\
&= \ee^x \Big\{ \int_0^\infty e^{-\alpha u- \beta \int_0^u \ind{X(t) \in \ga} dt} \Big( \big[f + \beta (\bar w^{\beta, \infty})^- - \phi\big]\big(s + u, X(u)\big)+ \beta \ind{X(u) \in \ga} \hat w^{\beta, \infty}\big(s + u, X(u)\big) \Big)du \Big\}.
\end{align*}
Since $(\bar w^{\beta, \infty})^- + \hat w^{\beta, \infty} \ge 0$ on $[0, \infty) \times \ga$ we have
\begin{align*}
\hat w^{\beta, \infty}(s,x) &\ge \ee^x \Big\{ \int_0^\infty e^{-\alpha u -\beta \int_0^u \ind{X(t) \in \ga} dt} \big[f - \phi\big]\big(s + u, X(u)\big) du \Big\}\\
&\ge - \|f - \phi\| \ \ee^x \Big\{ \int_0^\infty e^{-\alpha u- \beta \int_0^u \ind{X(t) \in \ga} dt} du \Big\}.
\end{align*}
This completes the proof since $\hat w^{\beta, \infty} = w^{\beta, \infty} - F$ on $[0, \infty) \times \ga$.
\end{proof}

We impose the following assumptions on $F$: $F(s,x)=G(s,x)$ for $x\in \lo$ and $F(s,x)=H(s,x)$ for $x\in \lo^c \setminus \partial \lo$, where $G$ and $H$  are bounded continuous functions. Notice that $F$ can be arbitrary on $[0, \infty) \times \partial \lo$ as long as it is continuous in $s$ uniformly in $x$ from compact sets. In particular, $F$ can be equal to $G$ or $H$ on $\partial \lo$.
\begin{lemma} \label{lem:wbetainfty_form_tau}
Under assumption (A3) we have
\begin{equation}\label{eqn:wbetainfty ineq}
\begin{aligned}
w^{\beta, \infty} (s, x) \geq \sup_\tau \Big\{ J^\infty(s, x, \tau) - \ee^x \big\{ e^{-\alpha \tau}
\big( F - w^{\beta, \infty} \big)^+\big(s+\tau, X(\tau)\big) \big\}\Big\},
\end{aligned}
\end{equation}
and if $F \ge G\vee H$ on $\partial \lo$, i.e., $F$ is upper semicontinuous, then $w^{\beta,\infty}$ has the following equivalent representation:
\begin{equation}\label{eqn:wbetainfty_form_tau}
\begin{aligned}
w^{\beta, \infty} (s, x) = \sup_\tau \Big\{ J^\infty(s, x, \tau) - \ee^x \big\{ e^{-\alpha \tau}
\big( F - w^{\beta, \infty} \big)^+\big(s+\tau, X(\tau)\big) \big\}\Big\}.
\end{aligned}
\end{equation}
\end{lemma}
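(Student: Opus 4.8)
\emph{The plan} is to transcribe the proof of Lemma~\ref{lem:wbeta_form_tau} to the infinite-horizon setting, with the random horizon $\tau_\lo$ replaced by $+\infty$ and the penalized generator taken from \eqref{eqn:inf_penalized}. Write $g = f + \beta(F - w^{\beta,\infty})^+$, a bounded measurable function, so that by \eqref{eqn:inf_penalized} the function $w^{\beta,\infty}$ is the $\alpha$-resolvent of $g$ for the space-time process $(s+t, X(t))$. The first step is to record the strong Markov identity valid for every stopping time $\tau$:
\begin{equation}\label{eqn:plan_markov}
w^{\beta,\infty}(s,x) = \ee^x\Big\{\int_0^\tau e^{-\alpha u}\big[f + \beta(F - w^{\beta,\infty})^+\big]\big(s+u, X(u)\big)\,du + e^{-\alpha\tau}w^{\beta,\infty}\big(s+\tau, X(\tau)\big)\Big\},
\end{equation}
which follows by splitting $\int_0^\infty = \int_0^\tau + \int_\tau^\infty$ in \eqref{eqn:inf_penalized} and applying the strong Markov property to the tail integral. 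Dropping the non-negative penalty term and using the pointwise bound $w^{\beta,\infty} \ge F - (F - w^{\beta,\infty})^+$ evaluated at the state $X(\tau)$ turns \eqref{eqn:plan_markov} into
$$
w^{\beta,\infty}(s,x) \ge J^\infty(s,x,\tau) - \ee^x\big\{e^{-\alpha\tau}(F - w^{\beta,\infty})^+\big(s+\tau, X(\tau)\big)\big\}.
$$
Taking the supremum over $\tau$ yields the inequality \eqref{eqn:wbetainfty ineq}; this part uses only the measurable resolvent identity.

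For the equality \eqref{eqn:wbetainfty_form_tau} I would exhibit a stopping time attaining the supremum, namely
$$
\tau^* = \inf\{u \ge 0:\ w^{\beta,\infty}(s+u, X(u)) \le F(s+u, X(u))\},
$$
a stopping time for the usual augmentation since $w^{\beta,\infty}$ is continuous (Lemma~\ref{lem:inf_wbeta}), $F$ is Borel and $X$ is right-continuous. On $[0, \tau^*)$ one has $w^{\beta,\infty} > F$, so $(F - w^{\beta,\infty})^+$ vanishes there and the penalty integral in \eqref{eqn:plan_markov} taken at $\tau = \tau^*$ disappears. The remaining task is to show $w^{\beta,\infty} \le F$ at the instant $\tau^*$: this makes $(F - w^{\beta,\infty})^+ = F - w^{\beta,\infty}$ at $X(\tau^*)$, whence $w^{\beta,\infty}(s+\tau^*, X(\tau^*)) = F(s+\tau^*, X(\tau^*)) - (F - w^{\beta,\infty})^+(s+\tau^*, X(\tau^*))$, and substituting this into \eqref{eqn:plan_markov} produces exactly the value of the bracketed functional at $\tau = \tau^*$. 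Combined with \eqref{eqn:wbetainfty ineq} this forces equality. (The event $\{\tau^* = \infty\}$ is harmless, as $e^{-\alpha\tau^*}$ and the penalty integral both vanish there, consistently with \eqref{eqn:inffunct}.)

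The heart of the matter — and the only place where the extra hypothesis enters — is the claim $w^{\beta,\infty}(s+\tau^*, X(\tau^*)) \le F(s+\tau^*, X(\tau^*))$. By definition of the infimum there are times $u_n \downarrow \tau^*$ with $w^{\beta,\infty}(s+u_n, X(u_n)) \le F(s+u_n, X(u_n))$; right-continuity of the trajectories gives $X(u_n) \to X(\tau^*)$, continuity of $w^{\beta,\infty}$ (Lemma~\ref{lem:inf_wbeta}, which is where (A3) is used) gives $w^{\beta,\infty}(s+u_n, X(u_n)) \to w^{\beta,\infty}(s+\tau^*, X(\tau^*))$, and passing to the limit yields the claim as soon as $\limsup_n F(s+u_n, X(u_n)) \le F(s+\tau^*, X(\tau^*))$. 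If $X(\tau^*)$ lies in the open set $\lo$ or in $\lo^c\setminus\partial\lo$ this is immediate, since there $F$ coincides with the continuous function $G$, respectively $H$, on a neighbourhood, together with the assumed continuity of $F$ in the time variable. The genuine obstacle is the boundary case $X(\tau^*) \in \partial\lo$, where $F$ may jump; I expect this to be resolved precisely by the standing hypothesis $F \ge G\vee H$ on $\partial\lo$, which renders $F$ upper semicontinuous and therefore forces $\limsup_n F(s+u_n, X(u_n)) \le F(s+\tau^*, X(\tau^*))$. Without upper semicontinuity this inequality can fail at the boundary, which is exactly why only the one-sided estimate \eqref{eqn:wbetainfty ineq} is available in general.
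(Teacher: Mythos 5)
Your proposal is correct and follows essentially the same route as the paper: the same Markov identity for the penalized resolvent, the same pointwise bound $w^{\beta,\infty}\ge F-(F-w^{\beta,\infty})^+$ for the inequality, and the same hitting time $\inf\{u\ge 0:\ w^{\beta,\infty}(s+u,X(u))\le F(s+u,X(u))\}$, with upper semicontinuity of $F$ and continuity of $w^{\beta,\infty}$ forcing $w^{\beta,\infty}\le F$ at that instant and the case $\{\tau^*=\infty\}$ handled by discounting. Your unpacking of the boundary case $X(\tau^*)\in\partial\lo$ is exactly the detail the paper leaves implicit.
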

\begin{proof}
We follow the proof of Lemma \ref{lem:wbeta_form_tau}.
For any stopping time $\tau$ we have
\begin{equation*}
w^{\beta, \infty}(s,x) = \ee^x \Big\{ \int_0^\tau e^{-\alpha u} \big( f + \beta(F - w^{\beta,\infty})^+\big)(s + u, X(u)) du + e^{-\alpha \tau} w^{\beta, \infty}(s+\tau, X(\tau) \Big\}.
\end{equation*}
Since $w^{\beta, \infty} \ge F - (F - w^{\beta, \infty})^+$  we obtain \eqref{eqn:wbetainfty ineq}.
Let $\sigma = \inf\{ u: w^{\beta, \infty} (s+u, X(u)) \le F(s+u, X(u)) \}$. On the set $\{\sigma < \infty\}$ the upper semicontinuity of $F$ and the continuity of $w^{\beta, \infty}$ implies $w^{\beta, \infty} (s+\sigma, X(\sigma)) \le F(s+\sigma, X(\sigma))$. Combining this with a trivial result on the set $\{\sigma = \infty\}$ yields
$$
w^{\beta, \infty}(s,x) = J(s, x, \sigma), \qquad \ee^x \big\{ e^{-\alpha \sigma}
\big( F - w^{\beta, \infty} \big)^+\big(s+\sigma, X(\sigma)\big) \big\} = 0.
$$
This, together with \eqref{eqn:wbetainfty ineq}, implies representation \eqref{eqn:wbetainfty_form_tau}.
\end{proof}

In what follows we shall need the following two assumptions:
\begin{assumption}
\item[(A5)] \label{ass:sigma_ve} For any $x \in \partial \lo$ we have
$$
\lim_{\ve \to 0} \sigma_\ve = 0 \text{ $\prob^x$-a.s.,}
\qquad \text{and} \qquad
\lim_{\ve \to 0} \sigma^c_\ve = 0 \text{ $\prob^x$-a.s.,}
$$
where
\begin{align*}
&\sigma_\ve = \inf \{ u \ge 0: \ X(u) \in E \setminus (\lo \cup \Gamma_\ve) \},\\
&\sigma^c_\ve = \inf \{ u \ge 0: \ X(u) \in E \setminus (\lo^c \cup \Gamma_\ve) \},
\end{align*}
and $\Gamma_\ve$ is the $\ve$-neighbourhood of $\partial \lo$:
$$
\Gamma_\ve = \{ x \in E:\ \inf_{y \in \partial \lo} \|x - y\| < \ve \}.
$$
\end{assumption}
\begin{assumption}
\item[(A6)] \label{ass:zero_at_boundary} $\prob^x \{ X(T) \in \partial \lo\} = 0$ for any $x \in E$ and $T> 0$.
\end{assumption}
\begin{remark}
Assumption \ref{ass:sigma_ve} is satisfied whenever each point of $\partial \lo$ is regular for $\lo$ and $E\setminus (\lo\cup \partial \lo)$ (see, e.g., Blumenthal and Getoor \cite{blum} for a definition and properties of regular points). Indeed, \cite[Proposition 10.4]{blum} implies that $T_\lo \ge \lim_{\ve \to 0} \sigma^c_\ve$, where $T_\lo$ is the first hitting time of $\lo$, i.e.
$$
T_\lo = \inf\{ t > 0:\ X(t) \in \lo \}.
$$
Take $x \in \partial \lo$. Its regularity means that $T_\lo = 0$ $\prob^x$-a.s.. Therefore, $\lim_{\ve \to 0} \sigma^c_\ve = 0$ $\prob^x$-a.s.. The convergence of $\sigma_\ve$ to $0$ can be proved in an analogous way.

Assumption \ref{ass:zero_at_boundary} is satisfied whenever Markov process $(X(t))$ has a density at time $T$ with respect to a measure which puts zero weight on the set $\partial \lo$.
\end{remark}

\begin{thm}\label{thm:inf_cont_and_convergence}
Assume \ref{ass:a2'} and \ref{ass:strong_feller}.
\begin{enumerate}
\item $w^\infty$ is continuous on  $[0, \infty) \times (E \setminus \partial \lo)$.
\item $w^{\infty,\infty}(s,x) := \lim_{\beta \to \infty}w^{\beta,\infty}(s,x)$ is lower semicontinuous (l.s.c.) with values in $\er \cup \{\infty\}$.
\item if $F$ is l.s.c., then $w^\infty$ is l.s.c. and $w^{\infty,\infty}\ge w^\infty$.
\item if assumptions \ref{ass:sigma_ve}-\ref{ass:zero_at_boundary} are satisfied and $F \le G \vee H$ on $[0, \infty) \times \partial \lo$, then $w^{\beta,\infty}$ converges to $w^\infty$, as $\beta \to \infty$, uniformly on compact subsets of $[0, \infty) \times (E \setminus \partial \lo)$.
\end{enumerate}
\end{thm}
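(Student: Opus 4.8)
The plan is to handle the four assertions in order, using throughout that the penalized functions $w^{\beta,\infty}$ are continuous (Lemma \ref{lem:inf_wbeta}(1)) and, by the representation \eqref{eqn:wbetainfty_form_mbeta} together with $M_\beta\subseteq M_{\beta'}$ for $\beta\le\beta'$, non-decreasing in $\beta$, so that $w^{\infty,\infty}=\sup_\beta w^{\beta,\infty}$. Assertion (2) is then immediate: a pointwise supremum of continuous functions is lower semicontinuous, so $w^{\infty,\infty}$ is l.s.c.\ with values in $\er\cup\{\infty\}$.

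For (1) I would follow the proof of Theorem \ref{thm:cont_w_in _O}. Set $w_h(s,x)=\ee^x\{\int_0^h e^{-\alpha u}f(s+u,X(u))\,du+e^{-\alpha h}w^\infty(s+h,X(h))\}$. Because $w^\infty$ is bounded and, by Lemma \ref{lem:inf_wbeta}(2), continuous in $s$ uniformly in $x$ on compacts, Lemma \ref{lem:a3'} (using \ref{ass:strong_feller}) makes $w_h$ continuous on all of $[0,\infty)\times E$. The strong Markov property gives $w_h=\sup_{\tau\ge h}J^\infty(\cdot,\cdot,\tau)$, hence $0\le w^\infty-w_h=I_h:=\sup_\tau\{J^\infty(\cdot,\cdot,\tau)-J^\infty(\cdot,\cdot,\tau\vee h)\}$. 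The point is that $I_h\to0$ uniformly on compact subsets of $\lo$ and, separately, of $E\setminus\bar\lo$: on either open region $F$ equals the continuous $G$, resp.\ $H$, and for $x$ in a compact subset the process exits the region before time $h$ with probability tending to $0$ uniformly, which follows from \ref{ass:a2'} just as \ref{ass:cont} controls $\tau_\lo$. The remaining terms are estimated exactly as in Theorem \ref{thm:cont_w_in _O}, via Proposition \ref{cor:03} and Dini's theorem applied to $v_h(s,x)=\sup_{\tau\le h}\ee^x\{G(s+\tau,X(\tau))\}$. Thus $w_h\to w^\infty$ locally uniformly on each region, and as these two open sets partition $E\setminus\partial\lo$, continuity of $w^\infty$ there follows.

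For (3) I would obtain lower semicontinuity of $w^\infty$ by approximating $F$ from below: pick continuous bounded $F_n\uparrow F$ (possible since $F$ is l.s.c.) and let $w^\infty_n$ be the associated value functions. Monotone convergence gives $J^\infty_n(s,x,\tau)\uparrow J^\infty(s,x,\tau)$ for each $\tau$, so $w^\infty_n\uparrow w^\infty$; each $w^\infty_n$ is continuous by the argument of (1) (no boundary splitting is needed for a globally continuous payoff), whence $w^\infty=\sup_n w^\infty_n$ is l.s.c. For $w^{\infty,\infty}\ge w^\infty$ I first show $w^{\infty,\infty}\ge F$: inserting $b=\beta\ind{u\le\eta}$ into \eqref{eqn:wbetainfty_form_mbeta} produces an approximate identity $\beta e^{-(\alpha+\beta)u}\,du$ concentrating at $0$, and lower semicontinuity of $F$ along the right-continuous path yields $w^{\infty,\infty}(s,x)\ge F(s,x)$ (as in Proposition \ref{prop:pointwise}). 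Then $(F-w^{\infty,\infty})^+\equiv0$, and letting $\beta\to\infty$ in \eqref{eqn:wbetainfty ineq} the correction term vanishes by dominated convergence, giving $w^{\infty,\infty}\ge J^\infty(\cdot,\cdot,\tau)$ for all $\tau$, i.e.\ $w^{\infty,\infty}\ge w^\infty$.

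Part (4) is the crux. The randomized-stopping reading of $b\in M_\beta$ in \eqref{eqn:wbetainfty_form_mbeta} (running the clock $\int_0^u b\,dt$ against an independent exponential realizes a randomized stopping time, which cannot beat $w^\infty$) gives $w^{\beta,\infty}\le w^\infty$, hence $w^{\infty,\infty}\le w^\infty$; so, as in (3), it remains to prove $(F-w^{\infty,\infty})^+\equiv0$, which off $\partial\lo$ holds by continuity of $F$ there. On $\partial\lo$ only $F\le G\vee H$ is assumed and $F$ need not be upper semicontinuous, so \eqref{eqn:wbetainfty ineq} is just an inequality and the boundary must be treated by hand. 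I would migrate the interior and exterior values up to $\partial\lo$ using \ref{ass:sigma_ve}: for $x\in\partial\lo$, stopping the fixed-point equation for $w^{\beta,\infty}$ at the hitting time $\sigma^c_\ve$ of the deep interior $\{y\in\lo:\operatorname{dist}(y,\partial\lo)\ge\ve\}$, dropping the nonnegative penalty, using $w^{\infty,\infty}\ge G$ on $\lo$ and $\ee^x\{\int_0^{\sigma^c_\ve}e^{-\alpha u}f\,du\}\ge-\tfrac{\|f\|}{\alpha}\ee^x\{1-e^{-\alpha\sigma^c_\ve}\}$, and letting $\beta\to\infty$, gives $w^{\infty,\infty}(s,x)\ge\ee^x\{e^{-\alpha\sigma^c_\ve}G(s+\sigma^c_\ve,X(\sigma^c_\ve))\}-\tfrac{\|f\|}{\alpha}\ee^x\{1-e^{-\alpha\sigma^c_\ve}\}$. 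As $\sigma^c_\ve\to0$ $\prob^x$-a.s.\ by \ref{ass:sigma_ve} and $G$ is continuous, $\ve\to0$ yields $w^{\infty,\infty}(s,x)\ge G(s,x)$, and symmetrically (with $\sigma_\ve$ and $H$) $\ge H(s,x)$, so $w^{\infty,\infty}\ge G\vee H\ge F$ on $\partial\lo$. Assumption \ref{ass:zero_at_boundary} enters complementarily: since the terminal law does not charge $\partial\lo$, any residual of the correction term $\ee^x\{e^{-\alpha\tau}(F-w^{\infty,\infty})^+(s+\tau,X(\tau))\}$ carried by the boundary is negligible in the limit in \eqref{eqn:wbetainfty ineq}. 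Either way $(F-w^{\infty,\infty})^+$ contributes nothing, $w^{\infty,\infty}=w^\infty$, and since $w^{\beta,\infty}\uparrow w^\infty$ with all functions continuous and $w^\infty$ continuous on $E\setminus\partial\lo$ by (1), Dini's theorem yields uniform convergence on compact subsets of $[0,\infty)\times(E\setminus\partial\lo)$. The boundary analysis is the expected main difficulty: it is precisely the tension between the one-sided bound $F\le G\vee H$ and the inequality-only representation \eqref{eqn:wbetainfty ineq} that forces both \ref{ass:sigma_ve} (to pull $G$ and $H$ up to $\partial\lo$) and \ref{ass:zero_at_boundary} (to render the boundary negligible).
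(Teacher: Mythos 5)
Your parts (1) and (2) follow the paper's proof essentially verbatim: the paper also reduces (1) to the argument of Theorem \ref{thm:cont_w_in _O} applied on the two open regions (quoting Mertens' theorem for $\sup_{\tau\ge h}J^\infty$), and also gets (2) from monotonicity in $\beta$ plus continuity of $w^{\beta,\infty}$. Parts (3) and (4) are where you genuinely diverge, and your route is sound but different. For the lower semicontinuity of $w^\infty$ in (3) the paper does not approximate $F$ from below: it writes $w^\infty=\max\{\tilde w^\infty,F\}$ via Mertens' theorem, where $\tilde w^\infty=\sup_{\tau>0}J^\infty$ is l.s.c.\ as the increasing limit, as $h\to 0$, of the continuous functions $\sup_{\tau\ge h}J^\infty$; your inf-convolution approximation $F_n\uparrow F$ works as well, provided you check that each $w^\infty_n$ is continuous (your appeal to the argument of (1) for a globally continuous payoff is legitimate) and that $\sup_n J^\infty_n=J^\infty$ by monotone convergence. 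The substantial difference is (4). The paper's proof is two-staged: it first takes $F=G\vee H$ on $\partial\lo$, so that $F$ is u.s.c.\ and Lemma \ref{lem:wbetainfty_form_tau} yields the equality representation and hence $w^{\infty,\infty}\le w^\infty$; the lower bound is obtained by writing $G,H$ in resolvent form, using Lemma \ref{lem:inf_difference_estimate} to reduce the penalization error to $M\,\prob^x\{X(\tau)\in\partial\lo\}$, and using \ref{ass:sigma_ve} to perturb an arbitrary $\tau$ into boundary-avoiding stopping times $\tau_\ve$; a density argument extends this to general continuous $G,H$, and only then is \ref{ass:zero_at_boundary} invoked, to pass from $F=G\vee H$ to $F\le G\vee H$ on the boundary (the penalized equations for $F$ and for the modification $\tl F$ coincide because the occupation measure does not charge $\partial\lo$). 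You replace both halves: the upper bound by the randomized-stopping reading of \eqref{eqn:wbetainfty_form_mbeta}, which needs no semicontinuity of $F$ whatsoever, and the lower bound by stopping the penalized dynamic programming identity at the hitting times $\sigma^c_\ve,\sigma_\ve$ of the deep interior and exterior and letting them shrink to $0$ via \ref{ass:sigma_ve}, which gives $w^{\infty,\infty}\ge G\vee H\ge F$ on $\partial\lo$ directly. This is shorter, avoids the resolvent and density machinery entirely, and in fact never uses \ref{ass:zero_at_boundary}.

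Two points need attention. First, the assertion that a randomized stopping time ``cannot beat $w^\infty$'' is the load-bearing step that replaces Lemma \ref{lem:wbetainfty_form_tau}; it is true for arbitrary bounded measurable $F$, but it is nowhere in the paper and you must prove it: for $b\in M_\beta$ condition on the independent exponential $\theta$, check (Fubini plus a monotone-class/section argument) that for a.e.\ value $t$ of $\theta$ the section $\tau_b(\cdot,t)$ is a stopping time of the original completed right-continuous filtration, so each conditional payoff is at most $w^\infty(s,x)$, and integrate. Second, your closing remark about \ref{ass:zero_at_boundary} is spurious: once $(F-w^{\infty,\infty})^+\equiv 0$ everywhere, the correction term in \eqref{eqn:wbetainfty ineq} vanishes in the limit by monotone convergence and there is no ``residual carried by the boundary'' left to control, so \ref{ass:zero_at_boundary} plays no role anywhere in your argument. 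That is logically harmless --- a proof need not use every hypothesis, and your proof, with the randomization lemma written out, would show that (4) holds without \ref{ass:zero_at_boundary} --- but you should either delete that sentence or state explicitly that your argument strengthens the theorem, and then scrutinize the randomization step with corresponding care, since it is the only place where a gap could hide.
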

\begin{proof}
Let $\tilde{w}_h^\infty(s,x)=\sup_{\tau\geq h} J^\infty(s,x,\tau)$. Theorem 3b of \cite{mertens} implies
$$
\tilde{w}_h^\infty(s,x)=\ee^x \Big\{e^{-\alpha h} w^\infty(s+h,X(h))+ \int_0^{h} e^{-\alpha u} f\big(s+u,X(u)\big) du\Big\}.
$$
By Lemmas \ref{lem:a3'} and \ref{lem:inf_wbeta} the function $\tilde{w}_h^\infty$ is continuous for each $h>0$.
Under \ref{ass:cont}, which follows by Remark \ref{rem:a2'} from \ref{ass:a2'},  in the same way as in Theorem \ref{thm:cont_w_in _O} we prove that $\tilde{w}_h^\infty \to w^\infty$, as $h \to 0$, uniformly on compact subsets of $[0, \infty) \times (E \setminus \partial \lo)$. This implies assertion (1).

Assertion (2) follows from Lemma \ref{lem:inf_wbeta}. Indeed, $w^{\beta, \infty}$ is non-decreasing in $\beta$ and continuous for each $\beta$. Hence, the limit $w^{\infty, \infty}$ is well defined and lower semicontinuous.

Define $\tilde{w}^\infty(s,x)=\sup_{\tau > 0} J^\infty(s,x,\tau)$. Notice that $\tilde{w}_h^\infty(s,x)\uparrow \tilde{w}^\infty(s,x)$ as $h\to 0$, which implies $\tilde w^\infty$ is l.s.c. \cite[Theorem 3b]{mertens} implies $w^\infty(s,x)=\max\left\{\tilde{w}^\infty(s,x),F(s,x)\right\}$. Hence, if
$F$ is l.s.c., then the mapping $w^\infty(s,x)$ is l.s.c. as maximum of two l.s.c. functions. Applying \eqref{eqn:wbetainfty_form_mbeta} with  $b(u)=\beta \ind{u\leq h}$ and a sufficiently small $h$ yields $w^{\infty,\infty}\geq F$. Letting $\beta \to \infty$ in \eqref{eqn:wbetainfty ineq} and using  $w^{\infty,\infty}\geq F$ we obtain $w^{\infty,\infty}\geq w^\infty$. This completes the proof of assertion (3).

Last assertion is the most demanding. We assume first that $F = G \vee H$ on $\partial \lo$. We will relax this assumption later. By Lemma \ref{lem:wbetainfty_form_tau} we obtain $w^{\infty, \infty} \le w^\infty$. The proof of the opposite inequality is divided into several steps.
Assertion (4) will then follow from Dini's theorem.

\textbf{Step 1.} Assume $G = R_\alpha g$ and $H = R_\alpha h$, where the functions $g,h:[0, \infty) \times E \to \er$ are continuous bounded  and the resolvent $R_\alpha$ is defined in Lemma \ref{lem:inf_difference_estimate}. It is sufficent to consider $g,h \in \mathcal{C}_0([0, \infty) \times E)$, but it does not simplify the reasoning in any way.

Lemmas \ref{lem:inf_difference_estimate} and \ref{lem:wbetainfty_form_tau} imply the following bound:
\begin{align*}
w^{\beta, \infty} (s,x) \ge \sup_{\tau} \Big[ J^\infty(s,x,\tau)
&- \ee^x \Big\{ \ind{X(\tau) \in \lo} \|f-g\| \phi^\beta_\lo\big(X(\tau)\big) \Big\}\\
&- \ee^x \Big\{ \ind{X(\tau) \in \lo^c \setminus \partial \lo} \|f-h\| \phi^\beta_{\lo^c \setminus \partial \lo} \big(X(\tau)\big) \Big\}\\
&- \ee^x \Big\{\ind{X(\tau) \in \partial \lo} \frac{\|f-g\| \vee \|f - h\|}{\alpha} \Big\}
\Big],
\end{align*}
where, for an open set $A \subset E$, we define
$$
\phi^\beta_{A}(x) = \ee^x \Big\{ \int_0^\infty e^{-\alpha u - \beta \int_0^u \ind{X(t) \in A} dt}du \Big\}.
$$
By dominated convergence theorem, $\lim_{\beta \to \infty} \phi^\beta_A(x) = 0$ for $x \in A$.
Taking a limit as $\beta \to \infty$ yields
\begin{equation}\label{eqn:M_and_boundary}
w^{\infty, \infty} \ge \sup_\tau \Big[ J^\infty(s,x,\tau) - M \prob^x\{X(\tau) \in \partial \lo\} \Big],
\end{equation}
where $M = (\|f - g\| \vee \|f - h\|)/\alpha$.

\textbf{Step 2.} We will show that the supremum in \eqref{eqn:M_and_boundary} can be restricted to stopping times satisfying $\prob^x \{X(\tau) \in \partial \lo\} = 0$. Fix a stopping time $\tau$ and define for $\ve > 0$
$$
\tau_\ve =
\begin{cases}
\tau + (\tl \sigma_\ve \circ \theta_\tau),& \text{if}\ \ X(\tau) \notin \partial \lo,\\
\tau + (\sigma_\ve \circ \theta_\tau),& \text{if}\ \ X(\tau) \in \partial \lo,\ F\big(s+\tau, X(\tau)\big) = H\big(s+\tau, X(\tau)\big),\\
\tau + (\sigma^c_\ve \circ \theta_\tau),& \text{if}\ \ X(\tau) \in \partial \lo,\ F\big(s+\tau, X(\tau)\big) = G\big(s+\tau, X(\tau)\big),
\end{cases}
$$
where $\tl \sigma_\ve = \inf\{u \ge 0:\ X(u) \notin \Gamma_\ve \}$ and $\sigma_\ve, \sigma^c_\ve$ are defined in assumption \ref{ass:sigma_ve}. The stopping time $\tau_\ve$ might attain the value $\infty$, in which case the functional $J$ is also well defined due to discounting. Notice the difference between $\tl \sigma_\ve$ and $\sigma_\ve$ ($\sigma^c_\ve$): the former is the first exit time from the $\ve$-neighbourhood $\Gamma_\ve$ of $\partial \lo$, whereas the latter is the first exit time from $\lo \cup \Gamma_\ve$ ($\lo^c \cup \Gamma_\ve$, resp.). The stopping time $\tau_\ve$ equals $\tau$ for appropriately small $\ve$ if $X(\tau) \notin \partial \lo$. Otherwise, i.e., when $X(\tau) \in \partial\lo$, it follows from assumption \ref{ass:sigma_ve} that $\lim_{\ve \to 0} \tau_\ve \to \tau$ $\prob^x$-a.s. If $F = H$ at the time $\tau$ then $X(\tau_\ve) \in \lo^c$ (if it is finite) and by the continuity of $H$ we obtain
$$
\lim_{\ve \to 0} e^{-\alpha \tau_\ve} H\big(\tau_\ve, X(\tau_\ve)\big) =  e^{-\alpha \tau} H\big(\tau, X(\tau)\big) \quad \text{$\prob^x$-a.s.}
$$
We proceed similarly when $F = G$ at the time $\tau$ and get
$$
\lim_{\ve \to 0}  e^{-\alpha \tau_\ve} F\big(\tau_\ve, X(\tau_\ve)\big) =  e^{-\alpha \tau} F\big(\tau, X(\tau)\big) \quad \text{$\prob^x$-a.s.}
$$
Dominated convergence theorem implies
$$
\lim_{\ve \to 0} J^\infty\big(s, x, \tau_\ve \big) = J^\infty(s, x, \tau).
$$
We also have $\prob^x\{ X(\tau_\ve) \in \partial \lo\} = 0$ for each $\ve > 0$. Hence,
\begin{align*}
J^\infty(s,x, \tau) &= \lim_{\ve \to 0} \Big[ J^\infty(s,x,\tau_\ve) - M \prob^x\{X(\tau_\ve) \in \partial \lo\} \Big]\le \sup_{\hat\tau} \Big[ J^\infty(s,x,\hat\tau) - M \prob^x\{X(\hat\tau) \in \partial \lo\} \Big].
\end{align*}
Combining this result with \eqref{eqn:M_and_boundary} yields
$$
J^\infty(s,x,\tau) \le w^{\infty, \infty}(s,x),
$$
which, due to arbitrariness of $\tau$, gives the required inequality $w^\infty \le w^{\infty, \infty}$.

\textbf{Step 3.} Using standard methods we extend above result to continuous bounded $G$ and $H$ in a similar way as in Theorem \ref{thm:cont_for_upjump}.

We relax now the assumption $F = G \vee H$ on $\partial \lo$. Let $F$ be as in the statement of the theorem and
$$
\tl F(s,x) =
\begin{cases}
F(s,x), & x \notin \partial \lo,\\
(G \vee H)(s,x), & x \in \partial \lo.
\end{cases}
$$
Denote by $\tl w^{\beta, \infty}$ and $\tl w^\infty$ the value functions corresponding to $\tl F$. Fubini's theorem and assumption \ref{ass:zero_at_boundary} imply that
$$
\ee^x \Big\{ \int_0^\infty e^{-\alpha u} \ind{X(u) \in \partial \lo} du \Big\} = 0.
$$
Using this equality we obtain
\begin{align*}
&w^{\beta, \infty}(s,x)\\
&= \ee^x \Big\{ \int_0^\infty e^{-\alpha u} \Big[f\big(s + u, X(u)\big) + \ind{X(u) \notin \partial \lo} \beta \big(F - w^{\beta,\infty}\big)^+\big(s + u, X(u)\big)\\
&\mop{90}+ \ind{X(u) \in \partial \lo} \beta \big(F - w^{\beta,\infty}\big)^+\big(s + u, X(u)\big) \Big]du \Big\}\\
&= \ee^x \Big\{ \int_0^\infty e^{-\alpha u} \Big[f\big(s + u, X(u)\big) + \ind{X(u) \notin \partial \lo} \beta \big(\tl F - w^{\beta,\infty}\big)^+\big(s + u, X(u)\big)\\
&\mop{90}+ \ind{X(u) \in \partial \lo} \beta \big(\tl F - w^{\beta,\infty}\big)^+\big(s + u, X(u)\big) \Big]du \Big\}\\
&= \ee^x \Big\{ \int_0^\infty e^{-\alpha u} \Big[f\big(s + u, X(u)\big) + \beta \big(\tl F - w^{\beta,\infty}\big)^+\big(s + u, X(u)\big)\Big]du \Big\},
\end{align*}
where the second equality follows from the fact that $F$ coincides with $\tl F$ on $E \setminus \partial \lo$ and the third term under the integral integrates to zero. Since $\tl w^{\beta, \infty}$ is a unique solution of the penalized equation \eqref{eqn:inf_penalized} with function $\tl F$ (see Lemma \ref{lem:inf_wbeta}) we conclude that $\tl w^{\beta, \infty} = w^{\beta, \infty}$.

Denote by $\tl J^\infty$ the functional $J^\infty$ with the function $\tl F$. Fix any stopping time $\tau$ and define for $\ve > 0$
$$
\tau_\ve =
\begin{cases}
\tau + \tl \sigma_\ve \circ \theta_\tau,& \text{if}\ \ X(\tau) \notin \partial \lo,\\
\tau + \sigma_\ve \circ \theta_\tau,& \text{if}\ \ X(\tau) \in \partial \lo,\ \tl F\big(s+\tau, X(\tau)\big) = H\big(s+\tau, X(\tau)\big),\\
\tau + \sigma^c_\ve \circ \theta_\tau,& \text{if}\ \ X(\tau) \in \partial \lo,\ \tl F\big(s+\tau, X(\tau)\big) = G\big(s+\tau, X(\tau)\big).
\end{cases}
$$
Similarly, as in Step 2 we obtain
$$
\lim_{\ve \to 0} J^\infty(s, x, \tau_\ve) = \tl J^\infty(s,x,\tau),
$$
which implies $\tl w^{\infty} \le w^\infty$. Opposite inequality is obvious as $\tl F \ge F$.

In the first part of the proof of assertion (4) $\tl w^{\beta, \infty}$ was shown to converge to $\tl w^\infty$ uniformly on compact sets in $[0, \infty) \times (E \setminus \partial \lo)$. Since $\tl w^{\beta, \infty}$ coincides with $w^{\beta, \infty}$ and $\tl w^\infty$ coincides with $w^\infty$ this uniform convergence holds for $w^{\beta, \infty}$ and $w^\infty$.
\end{proof}

\begin{remark}
The complexity of the proof of assertion (4) in Theorem \ref{thm:inf_cont_and_convergence} is caused by the incompatibility of the continuity conditions that one has to impose on the function $F$. On the one hand, we need to prove that $w^{\infty, \infty} \ge F$, which requires that $F$ is lower semicontinuous. On the other hand, the inequality $w^{\beta, \infty} \le w^\infty$ is true under the condition that $F$ is upper semicontinuous (see Lemma \ref{lem:wbetainfty_form_tau}).
\end{remark}

\section{Finite time horizon}\label{sec:finite_time_func}
Methods from previous section can be applied to optimal stopping of the following functional:
\begin{multline*}
J^T(s,x,\tau) = \ee^x \Big\{ \int_0^{\tau \wedge (T-s)}e^{-\alpha u} f\big(s + u, X(u)\big) du
 + e^{-\alpha (\tau \wedge (T-s))} F\big((s + \tau) \wedge T, X(\tau \wedge (T-s))\big) \Big\},
\end{multline*}
where $\alpha \ge 0$, the function $f$ is measurable bounded and continuous in $s$ uniformly in $x$ from compact sets, and $F$ has the following form: $F(s,x)=G(s,x)$ for $x\in \lo$ and $F(s,x)=H(s,x)$ for $x\in \lo^c \setminus \partial \lo$ for bounded continuous functions $G$ and $H$. This functional is a finite time horizon version of the functional $J^{\infty}(s,x,\tau)$.

Denote the value function by $w^T(s,x) = \sup_{\tau} J^T(s,x,\tau)$. Notice that $w^T$ can be equivalently written as
$$
w(s,T) = \sup_{\tau \le T-s} J^\infty(s,x,\tau).
$$

To enable numerical approximations of this value function we introduce a penalized equation:
\begin{multline}\label{eqn:finite_time_penalized_eqn}
w^{\beta, T}(s,x) = \ee^x \Big\{ \int_0^{T-s} e^{-\alpha u} \big[f + \beta \big(F - w^{\beta,T}\big)^+ \big]\big(s + u, X(u)\big)du
+ e^{-\alpha (T-s)} F\big(T, X(T-s)\big)\Big\}.
\end{multline}
\begin{lemma}\label{lem:finite_time}
Assume \ref{ass:strong_feller}.
\begin{enumerate}
\item There is a unique measurable bounded solution to \eqref{eqn:finite_time_penalized_eqn} and this solution is continuous on $[0, T) \times E$. Under Assumption \ref{ass:a2'} the continuity extends to $\big([0, T) \times E\big) \cup \big(T \times (E \setminus \partial\lo)\big)$.
\item This solution has an equivalent representation:
\begin{equation}\label{eqn:wbetaT_form_mbeta}
\begin{aligned}
&w^{\beta,T}(s,x)
= \sup_{b\in M_\beta} \ee^x\Big\{\int_0^{T-s}e^{-\alpha u - \int_0^{u}b(t)dt} \Big[f\big(s+u,X(u)\big)+ b(u)F\big(s+u,X(u)\big)\Big]du\\
&\mop{85} + e^{-\alpha (T-s)-\int_0^{T-s} b(t)dt} F\big(T, X(T-s)\big) \Big\},
\end{aligned}
\end{equation}
where $M_\beta$ is the class of progressively measurable processes with values in $[0,\beta]$.
\item $w^{\beta, T}$ is increasing in $\beta$.
\end{enumerate}
\end{lemma}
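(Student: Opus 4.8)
The plan is to transcribe the blueprint of Lemmas \ref{lem:solv_w_beta} and \ref{lem:wbeta_form_mbeta} and of Corollary \ref{cor:cont_wbeta}, replacing the random horizon $\tau_\lo$ by the deterministic horizon $T-s$. First I would record the deterministic-time analogue of Lemma \ref{lem:transf} (this is Lemma 1 of \cite{stettner}, with $T-s$ in place of $\tau_\lo$): for any progressively measurable $b$ with values in $[0,\beta]$, equation \eqref{eqn:finite_time_penalized_eqn} is equivalent to
\begin{multline*}
w^{\beta,T}(s,x) = \ee^x\Big\{\int_0^{T-s} e^{-\alpha u-\int_0^u b(t)\,dt}\big[f + \beta(F-w^{\beta,T})^+ + b(u)w^{\beta,T}\big]\big(s+u,X(u)\big)\,du \\
+ e^{-\alpha(T-s)-\int_0^{T-s}b(t)\,dt}F\big(T,X(T-s)\big)\Big\}.
\end{multline*}
Taking $b\equiv\beta$ exhibits $w^{\beta,T}$ as a fixed point of the operator $\mathcal T^T$ obtained by substituting a generic bounded measurable $\phi$ for $w^{\beta,T}$ on the right and discounting at rate $\alpha+\beta$. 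Since $\phi\mapsto \beta(F-\phi)^+ + \beta\phi = \beta(F\vee\phi)$ is $\beta$-Lipschitz, $\mathcal T^T$ is a contraction with modulus $\frac{\beta}{\alpha+\beta}\big(1-e^{-(\alpha+\beta)(T-s)}\big)<1$ for every $s\le T$; crucially this works even when $\alpha=0$ precisely because $T<\infty$. Banach's theorem then yields existence and uniqueness in the space of bounded measurable functions.

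Next I would show $\mathcal T^T$ preserves continuity, so that its fixed point is continuous on $[0,T)\times E$. The integrand $\Psi:=f+\beta(F-\phi)^++\beta\phi$ is bounded measurable and continuous in $s$ uniformly on compacts whenever $\phi$ is; writing $\int_0^{T-s}(\cdot)\,du=\int_0^\infty \ind{u<T-s}(\cdot)\,du$, for each fixed $u>0$ the map $(s,x)\mapsto\ee^x\{\Psi(s+u,X(u))\}$ is continuous by Lemma \ref{lem:a3'}, and the indicator is continuous in $s$ off the single point $u=T-s$, so dominated convergence makes the integral term continuous on all of $[0,T]\times E$. The boundary term $(s,x)\mapsto \ee^x\{e^{-\alpha(T-s)}F(T,X(T-s))\}$ is the crux. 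Setting $g=F(T,\cdot)$ and $\sigma=T-s$, it equals $e^{-\alpha\sigma}P_\sigma g(x)$, and the obstruction is that the time lag $\sigma$ now varies with $s$, so Lemma \ref{lem:a3'} does not apply directly. I would fix $s_0<T$, choose $r\in(0,T-s_0)$, and use Chapman--Kolmogorov $P_\sigma g=P_r(P_{\sigma-r}g)$ to peel off a fixed lag $r$; strong Feller (A3) then handles the $x$-variable, and the residual dependence on $\sigma$ reduces to the joint continuity of $(\sigma,x)\mapsto P_\sigma g(x)$ on $(0,\infty)\times E$ for bounded measurable $g$. I expect this joint space--time continuity of the strongly Feller semigroup to be \emph{the main obstacle}: it is recovered from (A3) by the same Chapman--Kolmogorov splitting combined with the tightness estimate of Proposition \ref{prop:01}, but it is exactly the place where the mere $x$-continuity supplied by (A3) must be upgraded, and the argument is more delicate than elsewhere in the paper.

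For the extension to $s=T$ under (A2'), note that \eqref{eqn:finite_time_penalized_eqn} gives $w^{\beta,T}(T,x)=F(T,x)$, so I only need $w^{\beta,T}(s,x)\to F(T,x_0)$ as $(s,x)\to(T,x_0)$ with $x_0\notin\partial\lo$. Because $\lo$ and $\lo^c\setminus\partial\lo$ are open and $F$ coincides with the continuous $G$, resp.\ $H$, on them, $F(T,\cdot)$ is continuous at every $x_0\notin\partial\lo$; together with (A2'), which keeps the trajectory close to its start over short times, this forces $P_{T-s}g(x)\to F(T,x_0)$ while the integral term vanishes, giving the claimed continuity off $\partial\lo$. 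Assertion (2) is then proved exactly as Lemma \ref{lem:wbeta_form_mbeta}: from the $b$-representation above and $b(u)\le\beta$ one has $\beta(F-w^{\beta,T})^+ + b(u)w^{\beta,T}\ge b(u)F$, so $w^{\beta,T}$ dominates the supremum in \eqref{eqn:wbetaT_form_mbeta}, with equality for the bang--bang control $b(u)=\beta\ind{F\ge w^{\beta,T}}(s+u,X(u))$. Finally, assertion (3) is immediate from \eqref{eqn:wbetaT_form_mbeta}, since $M_{\beta_1}\subseteq M_{\beta_2}$ for $\beta_1\le\beta_2$ renders the supremum non-decreasing in $\beta$.
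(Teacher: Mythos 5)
Your proposal is correct and takes essentially the same approach as the paper: a contraction fixed-point argument as in Lemma \ref{lem:solv_w_beta}, continuity via Lemma \ref{lem:a3'} together with joint space--time continuity of the semigroup (the varying time lag you flag as the main obstacle is exactly the point the paper handles by citing Proposition \ref{cor:03}), the extension to $s=T$ off $\partial\lo$ via \ref{ass:a2'} and the continuity of $F(T,\cdot)$ there, and the bang--bang control argument of Lemma \ref{lem:wbeta_form_mbeta} for the representation and the monotonicity in $\beta$. The extra details you supply (the contraction modulus valid even for $\alpha=0$, the Chapman--Kolmogorov peeling of a fixed lag) are careful elaborations of steps the paper leaves implicit, not a different route.
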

\begin{proof}
Similarly as in Lemma \ref{lem:solv_w_beta} we show that there is a unique bounded measurable solution to \eqref{eqn:finite_time_penalized_eqn}. Lemma \ref{lem:a3'} implies that this solution is continuous on $[0, T) \times E$. The continuity on $T \times (E \setminus \partial \lo)$ is more delicate. Fix $x \in \lo$ and a sequence $(s_n, x_n) \subset [0, T] \times \lo$ converging to $(T, x)$. We have
\begin{align*}
\big| w^{\beta, T} (s_n, x_n) - w^{\beta, T}(T, x) \big|
&\le
\big( \|f\| + \beta \|(F - w^{\beta, T})^+\|\big) (T-s_n) + 2 \|F\|\, \prob^{x_n} (\tau_\lo \le T-s_n)\\
&\mop{11}+
\big| \ee^{x_n} \big\{ e^{-\alpha (T-s_n)} G(T, X(T-s_n)) \big\} - G(T,x) \big|.
\end{align*}
The first term vanishes as $n \to \infty$. Assumption \ref{ass:a2'} implies that the second term converges to zero. The convergence of the third term follows from the continuity of the mapping (see Proposition \ref{cor:03}):
$$
(s,x,h) \mapsto \ee^x \{ e^{-\alpha h} G(s+h, X(h)) \}.
$$
Convergence to $x \in \lo^c \setminus \partial \lo$ can be proved in an analogous way.

Representation \eqref{eqn:wbetaT_form_mbeta} is obtained in an analogous way as in the proof of Lemma \ref{lem:wbeta_form_mbeta}. Assertion (3) follows immediately from \eqref{eqn:wbetaT_form_mbeta}.
\end{proof}

\begin{thm}\label{thm:main_finite_horizon}
Assume \ref{ass:a2'}, \ref{ass:strong_feller}.
\begin{enumerate}
\item The function $w^T$ is continuous on $[0, T] \times (E \setminus \partial \lo)$.
\item $w^{\infty,T}(s,x) := \lim_{\beta \to \infty}w^{\beta,T}(s,x)$ is lower semicontinuous on $[0, T) \times E$ with values in $\er \cup \{\infty\}$.
\item if $F$ is l.s.c., then $w^T$ is lower semicontinuous on $[0, T) \times E$ and $w^{\infty,T}\ge w^T$.
\end{enumerate}
Assume further \ref{ass:sigma_ve}, \ref{ass:zero_at_boundary}.
\begin{enumerate}
\setcounter{enumi}{3}
\item If $F \le G \vee H$ on $[0, T] \times \partial \lo$ then $w^{\beta, T}$ converges to $w^T$ as $\beta \to \infty$ uniformly on compact subsets of $[0, T] \times (E \setminus \partial \lo)$.
\item The mapping $(s, \infty) \ni T \mapsto w^T(s,x)$ is continuous for fixed $s$ and $x \in E \setminus \partial \lo$.
\end{enumerate}
\end{thm}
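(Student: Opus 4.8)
The plan is to mirror, assertion by assertion, the infinite-horizon argument of Theorem \ref{thm:inf_cont_and_convergence}, viewing the finite-horizon problem as the constraint $\tau\le T-s$ imposed on $J^\infty$, and to reserve a separate final step for the genuinely new continuity in $T$. Throughout I would work with the space-time process $(s+t,X(t))$ and rely on Lemma \ref{lem:finite_time}, which already furnishes existence, continuity (including the delicate fit at $\{T\}\times(E\setminus\partial\lo)$), monotonicity in $\beta$, and the representation \eqref{eqn:wbetaT_form_mbeta} of $w^{\beta,T}$. For assertion (1) I would first establish, as in Lemma \ref{lem:rightcont}, that $s\mapsto w^T(s,x)$ is continuous uniformly in $x$ on compact sets; here the finite horizon and $\alpha\ge 0$ make the tail estimate automatic. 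Writing $\tilde w^T_h(s,x)=\sup_{\tau\ge h}J^T(s,x,\tau)$, Theorem 3b of \cite{mertens} applied to $(s+t,X(t))$ expresses $\tilde w^T_h$ through $w^T((s+h)\wedge T,\cdot)$ evaluated at time $h\wedge(T-s)$, which is continuous on $\{s<T\}$ by Lemma \ref{lem:a3'} and the time-regularity just obtained. One then shows $\tilde w^T_h\to w^T$ uniformly on compact subsets of $[0,T]\times(E\setminus\partial\lo)$ by the estimate of Theorem \ref{thm:cont_w_in _O}, using that the starting point lies off $\partial\lo$ so $F$ is continuous there and that \ref{ass:a2'} yields \ref{ass:cont} via Remark \ref{rem:a2'}; continuity at $s=T$ is the short-time fit already proved in Lemma \ref{lem:finite_time}.

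Assertion (2) is then immediate: each $w^{\beta,T}$ is continuous on $[0,T)\times E$ and non-decreasing in $\beta$, so the pointwise limit $w^{\infty,T}$ is lower semicontinuous (with values in $\er\cup\{\infty\}$) as an increasing limit of continuous functions. For assertion (3) I would note that $\tilde w^T_h\uparrow\tilde w^T:=\sup_{\tau>0}J^T$ as $h\to0$, so $\tilde w^T$ is l.s.c., and Theorem 3b of \cite{mertens} gives $w^T=\max\{\tilde w^T,F\}$, which is l.s.c.\ whenever $F$ is. The inequality $w^{\infty,T}\ge w^T$ follows from the finite-horizon analogue of \eqref{eqn:wbetainfty ineq}, obtained from the penalized equation \eqref{eqn:finite_time_penalized_eqn} together with $w^{\beta,T}\ge F-(F-w^{\beta,T})^+$, after first checking $w^{\infty,T}\ge F$ from \eqref{eqn:wbetaT_form_mbeta} with $b=\beta\ind{u\le h}$ and small $h$, and then letting $\beta\to\infty$.

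\textbf{The main obstacle} is assertion (4), which follows Steps 1--3 of Theorem \ref{thm:inf_cont_and_convergence} once the finite-horizon counterparts of Lemma \ref{lem:wbetainfty_form_tau} and Lemma \ref{lem:inf_difference_estimate} are in place. The bound $w^{\beta,T}\le w^T$ under $F\ge G\vee H$ on $\partial\lo$ comes from a finite-horizon version of Lemma \ref{lem:wbetainfty_form_tau}, using upper semicontinuity of $F$ and continuity of $w^{\beta,T}$. For the reverse bound $w^{\infty,T}\ge w^T$, a control of $(F-w^{\beta,T})^+$ of the type provided by Lemma \ref{lem:inf_difference_estimate} reduces matters to restricting the supremum to stopping times with $\prob^x\{X(\tau)\in\partial\lo\}=0$. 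The perturbation $\tau_\ve$ of Step 2, built from \ref{ass:sigma_ve}, must now be fed into the truncated functional $J^T$; the delicate point is that the perturbation may push past the horizon, so one is left evaluating the payoff at the deterministic time $T-s$, and it is precisely \ref{ass:zero_at_boundary} that guarantees $X(T-s)\notin\partial\lo$ a.s., so no perturbation is triggered there and $\lim_{\ve\to0}J^T(s,x,\tau_\ve)=J^T(s,x,\tau)$. The relaxation to $F\le G\vee H$ on $\partial\lo$ proceeds as in Step 3: \ref{ass:zero_at_boundary} and Fubini give $\ee^x\{\int_0^{T-s}e^{-\alpha u}\ind{X(u)\in\partial\lo}du\}=0$, whence replacing $F$ by $\tilde F=G\vee H$ on $\partial\lo$ leaves $w^{\beta,T}$ unchanged, and the perturbation identifies the two value functions. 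Dini's theorem then upgrades the monotone pointwise convergence to the asserted uniform convergence on compact subsets of $[0,T]\times(E\setminus\partial\lo)$.

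\textbf{The new assertion (5).} Fix $s$ and $x\in E\setminus\partial\lo$. First, $T\mapsto w^T(s,x)$ is non-decreasing, since any $\tau\le T-s$ is admissible for every larger horizon and $J^T(s,x,\tau)=J^\infty(s,x,\tau)$ there. Next, from \eqref{eqn:wbetaT_form_mbeta} I would show $T\mapsto w^{\beta,T}(s,x)$ is continuous: the only nontrivial term is $e^{-\alpha(T-s)-\int_0^{T-s}b}F(T,X(T-s))$, whose fluctuation as $T\to T_0$ is bounded uniformly in $b\in M_\beta$ using time-regularity of $F$, the short-time control \ref{ass:a2'} (Proposition \ref{cor:03}), and continuity of $F$ off $\partial\lo$ together with $X(T_0-s)\notin\partial\lo$ a.s.\ by \ref{ass:zero_at_boundary}. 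Being a supremum over $\beta$ of functions continuous in $T$, the map $T\mapsto w^T(s,x)$ is then lower semicontinuous, and combined with monotonicity this gives left-continuity. Right-continuity I would prove directly: for $T'=T+\delta$ and a near-optimal $\tau\le T'-s$, truncating to $\tau\wedge(T-s)$ changes $J^\infty$ only on $\{\tau>T-s\}$, by an amount controlled via \ref{ass:a2'}, continuity of $F$ off $\partial\lo$, and \ref{ass:zero_at_boundary}, that tends to $0$ with $\delta$; hence $w^{T'}(s,x)\le w^T(s,x)+o(1)$. Together these yield continuity of $T\mapsto w^T(s,x)$.
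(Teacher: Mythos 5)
Your proposal is correct, and for assertions (1)--(3) it follows the same route as the paper (which simply transplants the arguments of Theorem \ref{thm:inf_cont_and_convergence} and Lemma \ref{lem:finite_time}). Where you genuinely diverge is in assertions (4) and (5). In the paper's Step 2 the perturbed times $\tau_\ve$ are \emph{not} truncated at the horizon: the perturbation is capped by a constant $\delta$, so $\tau_\ve \le T-s+\delta$, the functional $J^\infty$ is used instead of $J^T$, and one only gets $w^{\infty,T} \ge w^{T-\delta}$ (formulas \eqref{eqn:M_and_boundary_T} and \eqref{eqn:JL_delta}); the paper must then prove separately that $w^{T-\delta} \to w^T$ as $\delta \to 0$, via a three-way split on $\{\tau < T-s\}$, $\{\tau = T-s,\, X(\tau)\in\partial\lo\}$ (killed by \ref{ass:zero_at_boundary}) and $\{\tau = T-s,\, X(\tau)\notin\partial\lo\}$ (killed by quasi-left-continuity). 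You instead truncate $\tau_\ve$ at $T-s$ and note that \ref{ass:zero_at_boundary} makes the truncation harmless, since on $\{\tau = T-s\}$ the process is a.s.\ off $\partial\lo$ and no perturbation is triggered; this yields $w^{\infty,T}\ge w^T$ in one stroke, which is cleaner. The price is losing the paper's by-product: the relation $\lim_{\delta\to 0} w^{T-\delta}=w^T$ \emph{is} the paper's proof of assertion (5) (left-continuity; right-continuity is left implicit there), so you must supply (5) independently --- which you do, via monotonicity in $T$, continuity of $T\mapsto w^{\beta,T}(s,x)$ from \eqref{eqn:wbetaT_form_mbeta} (a fact the paper never states but which holds by the argument you sketch), lower semicontinuity of the supremum, and a direct truncation argument for right-continuity; your right-continuity step is in fact more explicit than anything in the paper. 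Two caveats: your left-continuity argument routes through $w^T=\sup_\beta w^{\beta,T}$, i.e.\ through assertion (4), so it formally inherits the hypothesis $F\le G\vee H$ on $\partial\lo$, which item (5) of the statement does not carry (the paper's direct estimate avoids this); and in your Step 1 the finite-horizon resolvent representations of $G$ and $H$ carry terminal terms, producing extra summands of order $\|F-G\|e^{-(\alpha+\beta)(T-s)}$ that vanish as $\beta\to\infty$ only for $s<T$ --- the case $s=T$ must be noted as trivial, since $w^{\beta,T}(T,\cdot)=w^T(T,\cdot)=F(T,\cdot)$.
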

\begin{proof}
Similarly as in the proof of Theorem \ref{thm:inf_cont_and_convergence}, we show that $w^T$ is continuous on $[0, T) \times (E \setminus \partial \lo)$. The extension of the continuity to $T \times (E \setminus \partial \lo)$ follows an analogous route as in the proof of Lemma \ref{lem:finite_time}. Fix $x \in \lo$ and a sequence $(s_n, x_n) \subset [0, T] \times \lo$ converging to $(T, x)$. Then
\begin{align*}
\big| w^{T} (s_n, x_n) - w^{T}(T, x) \big|
&\le
\|f\| (T-s_n) + 2 \|F\|\, \prob^{x_n} (\tau_\lo \le T-s_n)\\
&\mop{11}+
\Big| \sup_{\tau \le (T-s_n)} \ee^{x_n} \big\{ e^{-\alpha \tau} G(s_n + \tau, X(\tau)) \big\} - G(T,x) \Big|.
\end{align*}
The first term vanishes as $n \to \infty$. Assumption \ref{ass:a2'} implies that the second term converges to zero. The convergence of the third term follows from the continuity of the mapping (see \cite[Corollary 3.6]{palczewski2008})
$$
(s,x,h) \mapsto \sup_{\tau \le h} \ee^x \{ e^{-\alpha \tau} G(s+\tau, X(\tau)) \}.
$$
Convergence to $x \in \lo^c \setminus \partial \lo$ can be proved in a similar way.

Assertions (2)-(3) are proved in a similar way as in Theorem \ref{thm:inf_cont_and_convergence}.

Assertion (4) is trivial for $s = T$ since $w^{\beta,T}(T, x) = w^{T}(T,x) = F(T,x)$. In the following we will address the case $s < T$. In a similar way as in the proof of Theorem \ref{thm:inf_cont_and_convergence} we show that it suffices to prove both assertions for $F = G \vee H$ on $\partial \lo$. Under this condition, as in Lemma \ref{lem:wbetainfty_form_tau}, we show that the function $w^{\beta,T}$ has the following equivalent representation:
\begin{multline}\label{eqn:wbetaT_form_tau}
w^{\beta, T} (s, x) = \sup_\tau \Big\{ J^T(s, x, \tau)
- \ee^x \big\{ e^{-\alpha (\tau \wedge (T-s))}
\big( F - w^{\beta, T} \big)^+\big((s+\tau) \wedge T, X(\tau \wedge (T-s))\big) \big\}\Big\}.
\end{multline}
This implies that $w^{\infty, T} := \lim_{\beta \to \infty} w^{\beta, T} \le w^T$. The proof of the opposite inequality requires similar but slightly more delicate argument as in the proof of Theorem \ref{thm:inf_cont_and_convergence}.

\textbf{Step 1.} Assume
\begin{align*}
&G(s,x) = \ee^x \Big\{ \int_0^{T-s} e^{-\alpha u} g\big(s+u, X(u)\big) du + e^{-\alpha (T-s)} G\big(T, X(T)\big) \Big\},\\
&H(s,x) = \ee^x \Big\{ \int_0^{T-s} e^{-\alpha u} h\big(s+u, X(u)\big) du + e^{-\alpha (T-s)} H\big(T, X(T)\big) \Big\},
\end{align*}
for continuous bounded functions $g, h$. Combining arguments from proofs of Lemmas \ref{lem:bar_w_beta} and \ref{lem:inf_difference_estimate} we obtain
\begin{align*}
&(G - w^{\beta, T})^+(s,x) \le \|f - g \| \phi^{\beta, T-s}_\lo (x) + \|F - G\| e^{-(\alpha + \beta)(T-s)},\\
&(H - w^{\beta, T})^+(s,x) \le \|f - h \| \phi^{\beta, T-s}_{\lo^c \setminus \partial \lo}(x) + \|F - H\| e^{-(\alpha + \beta)(T-s)},
\end{align*}
where, for an open set $A \in E$,
$$
\phi^{\beta, t}_A (x) = \ee^x \Big\{ \int_0^{t} e^{-\alpha u - \beta \int_0^u \ind{X(t) \in A} dt}du \Big\}.
$$
Above estimates and identity \eqref{eqn:wbetaT_form_tau} imply the following bound:
\begin{align*}
w^{\beta, T} (s,x) \ge \sup_{\tau \le T-s} \Big[ J^T(s,x,\tau)
&- \ee^x \Big\{ \ind{\tau = T-s} \big( F - w^{\beta, T} \big)^+ \big(T, X(\tau)\big) \Big\}\\
&- \ee^x \Big\{ \ind{\tau < T-s} \ind{X(\tau) \in \lo} \|f-g\| \phi^{\beta, T-(s + \tau)}_\lo\big(X(\tau)\big) \Big\}\\
&- \ee^x \Big\{ \ind{\tau < T-s} \ind{X(\tau) \in \lo^c \setminus \partial \lo} \|f-h\| \phi^{\beta, T-(s + \tau)}_{\lo^c \setminus \partial \lo} \big(X(\tau)\big) \Big\}\\
&- \ee^x \Big\{ \ind{\tau < T-s} \ind{X(\tau) \in \partial \lo} \frac{\|f-g\| \vee \|f - h\|}{\alpha} \Big\} \\
&- \ee^x \Big\{ \ind{\tau < T-s} \|G - H\| e^{-(\alpha + \beta)(T-(s + \tau))} \Big\}
\Big].
\end{align*}
Taking the limit as $\beta \to \infty$  and recalling that $w^{\beta, T}(T, x) = F(T,x)$ yield
\begin{equation}\label{eqn:M_and_boundary_T}
w^{\infty, T}(s,x) \ge \sup_{\tau \le T-s} \Big[ J^T(s,x,\tau) - M \prob^x\{X(\tau) \in \partial \lo\} \Big],
\end{equation}
where $M = (\|f - g\| \vee \|f - h\|)/\alpha$.

\textbf{Step 2.} Fix $\delta > 0$. For any $T > 0$, $s \in [0, T)$ and a stopping time $\tau \le T-s$ define for $\ve > 0$
$$
\tau_\ve =
\begin{cases}
\tau + (\tl \sigma_\ve \circ \theta_\tau) \wedge \delta,& \text{if}\ \ X(\tau) \notin \partial \lo,\\
\tau + (\sigma_\ve \circ \theta_\tau) \wedge \delta,& \text{if}\ \ X(\tau) \in \partial \lo,\ F\big(s+\tau, X(\tau)\big) = H\big(s+\tau, X(\tau)\big),\\
\tau + (\sigma^c_\ve \circ \theta_\tau) \wedge \delta,& \text{if}\ \ X(\tau) \in \partial \lo,\ F\big(s+\tau, X(\tau)\big) = G\big(s+\tau, X(\tau)\big),
\end{cases}
$$
where $\tl \sigma_\ve = \inf\{u \ge 0:\ X(u) \notin \Gamma_\ve \}$ and $\sigma_\ve, \sigma^c_\ve$ are defined in assumption \ref{ass:sigma_ve}. Contrary to the proof of Theorem \ref{thm:inf_cont_and_convergence} the difference $\tau_\ve - \tau$ is bounded by $\delta$. This, however, does not affect the limits as $\ve \to 0$. If $F = H$ at the time $\tau$ then $X(\tau_\ve) \in \lo^c$ for appropriately small $\ve$ and by continuity of $H$ we obtain
$$
\lim_{\ve \to 0} H\big(\tau_\ve, X(\tau_\ve)\big) = H\big(\tau, X(\tau)\big) \quad \text{$\prob^x$-a.s.}
$$
We proceed similarly when $F = G$ at the time $\tau$ and get
$$
\lim_{\ve \to 0} F\big(\tau_\ve, X(\tau_\ve)\big) = F\big(\tau, X(\tau)\big) \quad \text{$\prob^x$-a.s.}
$$
Dominated convergence theorem implies
$$
\lim_{\ve \to 0} J^\infty\big(s, x, \tau_\ve \big) = J^\infty(s, x, \tau), \qquad \text{and} \qquad \lim_{\ve \to 0} \prob^x\{ X(\tau_\ve) \in \partial \lo\} = 0.
$$
Recalling that $\tau_\ve \le T - s + \delta$ we obtain
$$
J^\infty(s,x,\tau_\ve) - M \prob^x\{ X(\tau_\ve) \in \partial \lo\} \le \sup_{\tau \le T - s + \delta}\big\{ J^\infty\big(s, x, \tau \big) - M \prob\{X(\tau) \in \partial \lo\} \big\}.
$$
As $\ve \to 0$ the left-hand side converges to $J^\infty(s,x,\tau)$. The arbitrariness of $\tau$ implies
\begin{equation}\label{eqn:JL_delta}
w^T(s, x) \le \sup_{\tau \le T - s + \delta}\big\{ J^\infty\big(s, x, \tau \big) - M \prob\{X(\tau) \in \partial \lo\} \big\}.
\end{equation}

\textbf{Step 3.} Combining formulas \eqref{eqn:M_and_boundary_T} and \eqref{eqn:JL_delta} we get for any $\delta > 0$
$$
w^{\infty, T}(s, x) \ge w^{T-\delta}(s, x), \qquad (s,x) \in [0, T-\delta] \times E.
$$
Take a stopping time $\tau \le T-s$ and define $\tau_\delta = \tau \wedge (T - s - \delta)$. We have
\begin{align*}
&\limsup_{\delta \to 0} \big| J^\infty(s,x,\tau_\delta) - J^\infty(s, x, \tau) \big|\\
&\le
\limsup_{\delta \to 0} \ee^x \big\{ \big| e^{-\alpha \tau_\delta} F(s,x,\tau_\delta) - e^{-\alpha \tau} F(s,x,\tau) \big| \big\}\\
&\le
\limsup_{\delta \to 0} \ee^x \big\{ \ind{\tau < T-s} \big| e^{-\alpha \tau_\delta} F(s,x,\tau_\delta) - e^{-\alpha \tau} F(s,x,\tau) \big| \big\}\\
&\mop{13}+
\limsup_{\delta \to 0} \ee^x \big\{ \ind{\tau = T-s,\ X(\tau) \in \partial \lo} \big| e^{-\alpha \tau_\delta} F(s,x,\tau_\delta) - e^{-\alpha \tau} F(s,x,\tau) \big| \big\}\\
&\mop{13}+
\limsup_{\delta \to 0} \ee^x \big\{ \ind{\tau = T-s,\ X(\tau) \notin \partial \lo} \big| e^{-\alpha \tau_\delta} F(s,x,\tau_\delta) - e^{-\alpha \tau} F(s,x,\tau) \big| \big\}\\
&= (1) + (2) + (3).
\end{align*}
Limit (1) equals $0$ from dominated convergence theorem. By quasi-left continuity of the process $X(t)$ and dominated convergence theorem limit (3) is $0$ as well. Term (2) is dominated by $2 \|F\| \prob^x\{X(T-s) \in \partial \lo\}$, which by \ref{ass:zero_at_boundary} is equal to $0$. Hence,
$$
\lim_{\delta \to 0} w^{T-\delta}(s,x) = w^T(s,x).
$$
This completes the proof of both assertions in the case when $G$ and $H$ can be written in resolvent forms (assertion (4) follows from Dini's theorem and assertion (1)).

\textbf{Step 4.} Using standard methods we extend above result to continuous bounded $G$ and $H$ in a similar way as in Theorem \ref{thm:cont_for_upjump}.

\textbf{Step 5.} We relax the assumption that $F = G \vee H$ on $[0, T] \times \partial \lo$ as in the proof of Theorem \ref{thm:inf_cont_and_convergence}.
\end{proof}

\begin{appendices}

\section{Properties of weak Feller processes} \label{sec:feller}

A Markov process defined on a locally compact separable space is called standard (see \cite{dynkin}, p. 104, or \cite[Definition 9.2]{blum}) if
\begin{enumerate}
\item it is a strong Markov process,
\item it is c\`adl\`ag and quasi-left-continuous,
\item the filtration is complete and right-continuous.
\end{enumerate}

Let $\big(X(t)\big)$ be a c\`adl\`ag Markov process defined on a locally compact separable space $(E, \bse)$ endowed with a metric $\rho$ with respect to which every closed ball is compact. Assume that this process satisfies the weak Feller property:
$$
P_t\, \mathcal{C}_0 \subseteq \mathcal{C}_0,
$$
where $\mathcal C_0$ is the space of continuous bounded functions $E \to \er$ vanishing in infinity, and $P_t h(x) = \ee^x \left\{h\big(X(t)\big)\right\}$ for any bounded measurable $h:E \to \er$. Right continuity of $\big(X(t)\big)$ and
Theorem T1, Chapter XIII in \cite{Meyer} implies that the semigroup $P_t$ satisfies the following uniform continuity
property:
\begin{equation*}
\label{eqn:unifcont}
\lim_{t \to 0+} P_t f  = f \ \ \text{in $\mathcal{C}_0$}, \qquad \forall\, f \in \mathcal{C}_0.
\end{equation*}
Theorem 3.1 (p. 104) in \cite{dynkin} implies that there exists a standard Markov process on the state space $E$ with the semigroup $P_t$. In fact, it follows from the proof of the aforementioned theorem that the process $(X(t))$ satisfies the conditions of a standard process if its filtration is complete. The filtration of $(X(t))$ can be completed without changes to other properties of the process due to Proposition \ref{cor:03} below and Theorem 3.3 and Subsection 3.6 in \cite{dynkin}.

Let
\begin{equation}
\gamma_T(x,R)=\prob^x\left\{\exists_{s\in [0,T]} \
\rho\big(x,X(s)\big)\geq R \right\}.
\end{equation}
\begin{prop} (\cite[Proposition 2.1]{palczewski2008})
\label{prop:01}
For any compact set $K \subseteq E$
\begin{equation}
\sup_{x\in K} \gamma_T(x,R)\to 0 \end{equation} as $R \to \infty$.
\end{prop}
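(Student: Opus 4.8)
The plan is to reduce the path statement to a decreasing family of exit probabilities and then to upgrade pointwise convergence to uniform convergence on $K$ by a weak-Feller plus Dini argument. First I would fix a reference point $x_0\in E$ and a radius $r_0$ with $K\subseteq\{y:\rho(x_0,y)<r_0\}$. Since $\rho(x,X(s))\ge R$ together with $x\in K$ forces $\rho(x_0,X(s))\ge R-r_0$, it suffices to prove that $\sup_{x\in K}\prob^x\{\tau_R\le T\}\to0$ as $R\to\infty$, where $\tau_R=\inf\{s:\rho(x_0,X(s))\ge R\}$ is the exit time from the (relatively compact) open ball $U_R=\{y:\rho(x_0,y)<R\}$. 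Because the paths of $(X(t))$ are c\`adl\`ag, on $[0,T]$ they have relatively compact range almost surely (extract a monotone subsequence of times and use existence of left/right limits), so $\sup_{s\le T}\rho(x_0,X(s))<\infty$ $\prob^x$-a.s.; hence for each fixed $x$ the numbers $\prob^x\{\tau_R\le T\}$ decrease to $0$ as $R\uparrow\infty$.

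The main content is the uniformity in $x\in K$. The single building block I would establish first is compact containment at a fixed time: for any $t$ and $\ve>0$ there is a compact $C$ with $\sup_{x\in K}\prob^x\{X(t)\notin C\}\le\ve$. This follows from the weak Feller property: choosing $\phi_n\in\mathcal{C}_0$ with $0\le\phi_n\uparrow 1$ and $\mathrm{supp}\,\phi_n$ compact, the functions $P_t\phi_n$ lie in $\mathcal{C}_0$, are continuous, and increase to $P_t\mathbf 1\equiv1$, so by Dini's theorem the convergence is uniform on the compact set $K$; taking $C=\mathrm{supp}\,\phi_n$ for large $n$ gives $\inf_{x\in K}\prob^x\{X(t)\in C\}\ge1-\ve$. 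The same idea, applied to the functions $x\mapsto\prob^x\{\tau_R\le T\}$ (or, equivalently, to $x\mapsto\ee^x\{e^{-\lambda\tau_R}\}$, which dominates $e^{-\lambda T}\prob^x\{\tau_R\le T\}$), is what I would use to conclude: these functions decrease to $0$ as $R\to\infty$, and if they are upper semicontinuous then the Dini argument in its u.s.c. form (using compactness of $K$ and the finite intersection property for the closed superlevel sets $\{x:\prob^x\{\tau_R\le T\}\ge\ve\}$) yields $\sup_{x\in K}\prob^x\{\tau_R\le T\}\to0$.

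The hard part is therefore establishing the required regularity of the exit functionals from the weak Feller property alone. The natural route is the strong-Markov/resolvent decomposition $R_\lambda f(x)=\ee^x\{\int_0^{\tau_R}e^{-\lambda s}f(X(s))\,ds\}+\ee^x\{e^{-\lambda\tau_R}R_\lambda f(X(\tau_R))\}$ together with the supermartingale $e^{-\lambda t}g(X(t))$ for $g=\lambda R_\lambda f\in\mathcal{C}_0$, which yields the clean bound $\prob^x\{\tau_R\le T\}\le e^{\lambda T}\ee^x\{e^{-\lambda\tau_R}\}$ and ties the exit functional to resolvents of continuous functions; the quasi-left-continuity of $(X(t))$ ensures $\rho(x_0,X(\tau_R))\ge R$, so that on $\{\tau_R\le T\}$ the process genuinely sits in the far region where such continuous comparison functions are small. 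Squeezing $\ee^x\{e^{-\lambda\tau_R}\}$ between members of $\mathcal{C}_0$ furnished by the resolvent is the delicate step, and it is exactly here that the weak Feller property (continuity of $P_t$ and $R_\lambda$ on $\mathcal{C}_0$) together with the uniform continuity $\lim_{t\to0+}P_t f=f$ in $\mathcal{C}_0$ recorded in the Appendix does the work; I expect this semicontinuity/squeezing step, rather than the reductions above, to be the principal obstacle.
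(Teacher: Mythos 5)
First, a point of reference: the paper does not prove this proposition at all --- it is imported verbatim (with citation) from Proposition 2.1 of \cite{palczewski2008} --- so your attempt can only be judged against what a complete argument requires. Your preliminary reductions are fine: the triangle-inequality reduction to the exit time $\tau_R$ from a large ball $B(x_0,R)$, the pointwise convergence $\prob^x\{\tau_R\le T\}\downarrow 0$ from boundedness of c\`adl\`ag paths, and the fixed-time compact containment via Dini are all correct (one small slip: it is right continuity, not quasi-left-continuity, that gives $\rho(x_0,X(\tau_R))\ge R$). The genuine gap is exactly the step you defer to the end and call the ``principal obstacle'': your strategy hinges on $x\mapsto\prob^x\{\tau_R\le T\}$ (equivalently $x\mapsto\ee^x\{e^{-\lambda\tau_R}\}$) being upper semicontinuous, and this is never established. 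It is not a technicality that can be waved through: semicontinuity of exit-time functionals is precisely the kind of regularity that fails for general weak Feller processes --- that failure is the reason the present paper must introduce assumption (A1) as a genuine hypothesis (cf.\ Proposition \ref{prop:2.2} and Lemma \ref{lem:a1_to_a5}), and any route to it via path-space continuity of $x \mapsto \prob^x$ presupposes tightness, i.e.\ essentially the proposition being proved. Your proposed ``squeezing'' repair also has a quantitative circularity: to make $\lambda R_\lambda\varphi$ uniformly close to $\varphi$ (hence close to $1$ on $K$) you must let $\lambda\to\infty$, but your Chebyshev bound $\prob^x\{\tau_R\le T\}\le e^{\lambda T}\ee^x\{e^{-\lambda\tau_R}\}$ then carries the factor $e^{\lambda T}$, which overwhelms $\|\lambda R_\lambda\varphi-\varphi\|$ (that norm decays at best like $1/\lambda$, never like $e^{-\lambda T}$).

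The repair is to reorder your own ingredients so that Dini is applied to resolvents of continuous functions (which are in $\mathcal C_0$ by the weak Feller property) rather than to exit functionals (which need not be semicontinuous), and to keep the discount rate \emph{fixed}. Take $f_n\in\mathcal C_0$ with $0\le f_n\le 1$ and $f_n\uparrow 1$, and set $g_n=R_1f_n=\int_0^\infty e^{-s}P_sf_n\,ds\in\mathcal C_0$. By monotone convergence $g_n\uparrow 1$ pointwise, hence uniformly on $K$ by Dini. The sets $C_n=\{g_n\ge 1/2\}$ are compact (as $g_n$ vanishes at infinity), and the strong Markov property at the exit time $\tau_n=\inf\{s:X(s)\notin C_n\}$ gives $g_n(x)=\ee^x\big\{\int_0^{\tau_n}e^{-s}f_n(X(s))\,ds\big\}+\ee^x\big\{e^{-\tau_n}g_n(X(\tau_n))\big\}\le 1-\ee^x\{e^{-\tau_n}\}+\tfrac12\,\ee^x\{e^{-\tau_n}\}$, since $g_n(X(\tau_n))\le 1/2$ by right continuity. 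Hence $\ee^x\{e^{-\tau_n}\}\le 2\big(1-g_n(x)\big)$ and $\prob^x\{\tau_n\le T\}\le 2e^T\big(1-g_n(x)\big)\to 0$ uniformly on $K$; since $C_n\subseteq B(x_0,R_n)$ for some $R_n$, it follows that $\sup_{x\in K}\gamma_T(x,R)\le 2e^T\sup_{x \in K}\big(1-g_n(x)\big)$ for all $R\ge R_n+r_0$, which proves the proposition. Note the two decisive differences from your plan: the discount rate stays equal to $1$ (so the Chebyshev loss $e^T$ is a fixed constant), and the smallness comes from $f_n\uparrow 1$ rather than from $\lambda\to\infty$; no semicontinuity of any exit functional is ever needed.
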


\begin{prop} (\cite[Corollary 2.2]{palczewski2008})
\label{cor:03}$ $
\begin{itemize}
\item[i)] $P_t \mathcal{C} \subset \mathcal{C}$, where
$\mathcal{C}$ is the space of continuous bounded functions $E \to \er$ (the Feller property).
 \item[ii)] $\lim_{t \to 0} P_t f(x) = f(x)$ uniformly on compact subsets of $E$ for $f \in \mathcal{C}$.
\end{itemize}
\end{prop}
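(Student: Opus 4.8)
The plan is to deduce both assertions from the corresponding facts already available on $\mathcal{C}_0$---the weak Feller property $P_t \mathcal{C}_0 \subseteq \mathcal{C}_0$ and the uniform continuity $\lim_{t \to 0+} P_t g = g$ in $\mathcal{C}_0$ for $g \in \mathcal{C}_0$---by a truncation argument that discards the behaviour of $f$ far away and controls the error via the tightness estimate of Proposition \ref{prop:01}. The single device I would use is this: given a compact set $K \subseteq E$, a horizon $T>0$ and $\ve>0$, Proposition \ref{prop:01} furnishes a compact set $L \supseteq K$ with $\sup_{x \in K} \prob^x\{ X(u) \notin L \text{ for some } u \le T\} < \ve$; since every closed ball of $(E,\rho)$ is compact, I may then choose $g \in \mathcal{C}_0$ with $g \equiv f$ on $L$ and $\|g\| \le \|f\|$, by multiplying $f$ by a continuous cutoff equal to $1$ on $L$ and vanishing outside a slightly larger ball (e.g.\ $y \mapsto (1-\rho(y,L)/\delta)^+$), which has compact support. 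For such $g$, any $x \in K$ and $t \le T$ one has the basic bound
$$
\big| P_t f(x) - P_t g(x) \big| = \big| \ee^x\{ (f-g)(X(t)) \} \big| \le 2\|f\|\, \prob^x\{ X(t) \notin L \} < 2\|f\|\, \ve .
$$

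For assertion (i), I would fix $t>0$, a compact set $K$ and $\ve>0$, and apply the device with $T=t$. With $L$ and $g$ as above, the basic bound gives $\|P_t f - P_t g\| < 2\|f\|\ve$ on $K$. Since $g \in \mathcal{C}_0$, the weak Feller property yields $P_t g \in \mathcal{C}_0 \subseteq \mathcal{C}$, so on $K$ the function $P_t f$ is approximated uniformly, within $2\|f\|\ve$, by a continuous function. Letting $\ve \to 0$ exhibits $P_t f$ as a uniform limit on $K$ of continuous functions, hence continuous on $K$; by the arbitrariness of $K$ and local compactness of $E$ this gives $P_t f \in \mathcal{C}$.

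For assertion (ii), I would fix a compact set $K$, $\ve>0$ and a small $T>0$, and choose $L \supseteq K$ and $g \in \mathcal{C}_0$ for the whole interval $[0,T]$. For $x \in K$ and $t \le T$ decompose
$$
\big| P_t f(x) - f(x) \big| \le \big| P_t f(x) - P_t g(x) \big| + \big| P_t g(x) - g(x) \big| + \big| g(x) - f(x) \big|.
$$
The last term vanishes because $K \subseteq L$ and $g \equiv f$ on $L$; the first term is below $2\|f\|\ve$ by the basic bound; and the middle term is at most $\|P_t g - g\|$, which tends to $0$ as $t \to 0$ by the uniform continuity of the semigroup on $\mathcal{C}_0$. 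Hence $\limsup_{t \to 0} \sup_{x \in K} |P_t f(x) - f(x)| \le 2\|f\|\ve$, and letting $\ve \to 0$ gives the claimed uniform convergence on $K$.

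The only genuinely delicate point is the uniformity over the starting point $x \in K$ of the escape probability $\prob^x\{X(t) \notin L\}$, which is precisely what Proposition \ref{prop:01} supplies. Everything else is a routine triangle-inequality estimate combined with the elementary construction of the compactly supported cutoff $g$, which is available because every closed ball of $(E,\rho)$ is compact; I would therefore expect the bulk of the effort to be in invoking the tightness bound correctly rather than in any new analytic idea.
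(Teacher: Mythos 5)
Your proof is correct. The paper contains no proof of this proposition---it is imported verbatim from \cite[Corollary 2.2]{palczewski2008}---but your argument (truncate $f$ outside a compact set $L$ supplied by Proposition \ref{prop:01} using a cutoff with compact support, apply the weak Feller property and the uniform continuity $\lim_{t\to 0+}P_t g = g$ on $\mathcal{C}_0$, and control the error by the escape probability $\prob^x\{X(u)\notin L \text{ for some } u\le T\}<\ve$ uniformly over $x\in K$) is exactly the standard derivation of this corollary from the tightness estimate, i.e.\ essentially the same route as in the cited source, so nothing further is needed.
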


\begin{prop}
(\cite[Theorem 3.7]{dynkin})
\label{prop:fellercont}
For any compact set $K \subseteq E$ and any $\varepsilon, \delta > 0$ there is $h_0 > 0$ such that
$$
\sup_{0 \le h \le h_0} \ \sup_{x \in K}\, \prob^x \{ X(h) \notin B(x, \delta)\} < \varepsilon.
$$
\end{prop}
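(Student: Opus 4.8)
The plan is to reduce the required uniform-in-$x$ stochastic continuity over the compact set $K$ to the uniform-on-compacts convergence $P_h f \to f$ already furnished by Proposition~\ref{cor:03}(ii), via a finite subcover of $K$. Pointwise stochastic continuity (for each fixed $x$) is automatic from the c\`adl\`ag property, since $X(h)\to x$ $\prob^x$-a.s.\ as $h\to 0$; the real content of the statement is the \emph{uniformity} over $x\in K$. The main obstacle is that $\prob^x\{X(h)\notin B(x,\delta)\}$ carries the point $x$ in two roles at once — as the starting point of the process and as the centre of the excursion ball — so a single $x$-independent test function cannot be used directly. The key device is to decouple these two roles with the triangle inequality.

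Concretely, I would fix $K$ and $\varepsilon,\delta>0$, set $\delta'=\delta/4$, and use compactness of $K$ to choose a finite cover by balls $B(x_1,\delta'),\dots,B(x_n,\delta')$. For each centre $x_i$ I would select a function $f_i\in\mathcal C$ with $0\le f_i\le 1$ that vanishes on $B(x_i,\delta-2\delta')$ and equals $1$ outside $B(x_i,\delta-\delta')$; the explicit Lipschitz choice $f_i(y)=\min\big(1,\tfrac1{\delta'}(\rho(y,x_i)-(\delta-2\delta'))^+\big)$ works. Since $\delta-2\delta'=\delta/2>\delta'$, each $f_i$ is identically zero on the covering ball $B(x_i,\delta')$, which will be used below.

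The core estimate is the following. Take $x\in B(x_i,\delta')\cap K$. If $X(h)\notin B(x,\delta)$, then $\rho(X(h),x)\ge\delta$, and since $\rho(x,x_i)<\delta'$ the triangle inequality gives $\rho(X(h),x_i)>\delta-\delta'$, whence $f_i(X(h))=1$. Therefore $\ind{X(h)\notin B(x,\delta)}\le f_i(X(h))$, and taking expectations yields $\prob^x\{X(h)\notin B(x,\delta)\}\le P_h f_i(x)$. The crucial gain is that $f_i$ no longer depends on the individual $x$, only on the covering index $i$, so I may now exploit convergence that is uniform in the starting point.

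Finally I would invoke Proposition~\ref{cor:03}(ii): as each $f_i\in\mathcal C$, the convergence $P_h f_i\to f_i$ is uniform on the compact $K$, so there is $h_i>0$ with $\sup_{x\in K}|P_h f_i(x)-f_i(x)|<\varepsilon$ for $h\le h_i$. For $x\in B(x_i,\delta')\cap K$ we have $f_i(x)=0$, so $\prob^x\{X(h)\notin B(x,\delta)\}\le P_h f_i(x)<\varepsilon$ whenever $h\le h_i$. Setting $h_0=\min_{1\le i\le n}h_i>0$ and noting that every $x\in K$ lies in some $B(x_i,\delta')$ gives $\sup_{0\le h\le h_0}\sup_{x\in K}\prob^x\{X(h)\notin B(x,\delta)\}<\varepsilon$, which is the claim. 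The only genuinely delicate point is the decoupling described above; once the $x$-centred ball $B(x,\delta)$ is replaced by the index-dependent ball $B(x_i,\delta-\delta')$, the remaining steps are routine applications of the Feller continuity together with the finite-subcover argument.
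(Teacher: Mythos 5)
Your proof is correct. Note, however, that the paper does not prove this proposition at all: it is quoted verbatim from Dynkin (Theorem 3.7 of \cite{dynkin}), so there is no internal argument to compare against. What you have done is reconstruct a self-contained proof from Proposition \ref{cor:03}(ii), and the reconstruction is sound: the covering of $K$ by balls $B(x_i,\delta')$, the Lipschitz test functions $f_i$ vanishing on $B(x_i,\delta/2)$ and equal to $1$ off $B(x_i,\delta-\delta')$, and the triangle-inequality decoupling giving $\ind{X(h)\notin B(x,\delta)}\le f_i\big(X(h)\big)$ for $x\in B(x_i,\delta')$, hence $\prob^x\{X(h)\notin B(x,\delta)\}\le P_h f_i(x)=P_h f_i(x)-f_i(x)$, reduce everything to the uniform-on-compacts convergence $P_h f_i\to f_i$, exactly as intended; this is essentially the classical argument behind Dynkin's theorem. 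Two cosmetic remarks: the $f_i$ are bounded continuous but do not vanish at infinity, so it is essential (and correct) that you invoke Proposition \ref{cor:03}(ii) for $\mathcal C$ rather than the weak Feller property on $\mathcal C_0$ directly; and since a supremum of quantities each strictly below $\varepsilon$ need only be $\le\varepsilon$, one should run the argument with $\varepsilon/2$ to recover the strict inequality in the statement --- a trivial adjustment. What your route buys is that the appendix becomes self-contained modulo Proposition \ref{cor:03}, which the paper already imports from \cite{palczewski2008}; what the paper's citation buys is brevity.
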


\end{appendices}

\end{document}